\tikzset{%
    symbol/.style={%
        draw=none,
        every to/.append style={%
            edge node={node [sloped, allow upside down, auto=false]{$#1$}}}
    }
}
\def \Z{\mathbb{Z}}
\def \R{\mathbb{R}}
\def \Q{\mathbb{Q}}
\def \A{\mathcal{A}}
\def \E{\mathcal{E}}
\def \F{\mathcal{F}}
\def \GG{\mathcal{G}}
\def \O{\mathcal{O}}
\def \I{\mathcal{I}}
\def \k{{\bf k}}
\def \e{{\bf e}}
\def \B{\mathbb{B}}
\def \L{\mathcal{L}}
\def \bs{{\bf s}}
\def \br{{\bf r}}
\def\v{\mathfrak{v}}
\def\w{\mathfrak{w}}
\def\uu{\mathfrak{u}}
\def \Trop{\operatorname{Trop}}
\def \In{\operatorname{in}}
\def \Proj{\operatorname{Proj}}
\def \gr{\operatorname{gr}}
\def \Spec{\operatorname{Spec}}
\def \Hom{\operatorname{Hom}}
\def \GL{\operatorname{GL}}
\def \Sh{\operatorname{Sh}}
\def \Sym{\operatorname{Sym}}
\def \can{\operatorname{can}}
\def \GF{\textup{GF}}
\def\Div{\operatorname{Div}}
\def\Pic{\operatorname{Pic}}
\newcommand{\bx}{{\bf x}}
\newcommand{\by}{{\bf y}}
\newcommand{\bt}{{\bf t}}
\newcommand{\N}{N_{\mathbb{Q}}}
\newcommand{\M}{M_{\mathbb{Q}}}
\newcommand{\m}{{\bf m}}
\DeclareMathOperator{\Mod}{Mod}
\DeclareMathOperator{\Vect}{Vect}
\DeclareMathOperator{\Alg}{Alg}
\DeclareMathOperator{\colim}{\varinjlim}
\theoremstyle{plain}
\newtheorem{theorem}{Theorem}[section]
\newtheorem{lemma}[theorem]{Lemma}
\newtheorem{proposition}[theorem]{Proposition}
\newtheorem{corollary}[theorem]{Corollary}
\newtheorem{conjecture}[theorem]{Conjecture}
\newtheorem{algorithm}[theorem]{Algorithm}
\theoremstyle{definition}
\newtheorem{example}[theorem]{Example}
\newtheorem{definition}[theorem]{Definition}
\newtheorem{remark}[theorem]{Remark}
\begin{document}

\title{Toric flat families, valuations, and applications to projectivized toric vector bundles}

\author{Kiumars Kaveh}
\address{Department of Mathematics, University of Pittsburgh,
Pittsburgh, PA, USA.}
\email{kaveh@pitt.edu}

\author{Christopher Manon}
\address{Department of Mathematics, University of Kentucky, Lexington, KY, USA}
\email{Christopher.Manon@uky.edu}

\date{\today}
\thanks{The first author is partially supported by National Science Foundation Grant DMS-2101843 and a Simons Collaboration Grant (award number 714052). The second author is partially supported by National Science Foundation Grant DMS-2101911 and a Simons Collaboration Grant (award number 587209).}
\subjclass[2010]{}
\keywords{}

\begin{abstract}
Using the notion of a valuation into the semifield of piecewise linear functions, we give a classification of torus equivariant flat families of finite type over a toric variety base by certain piecewise linear maps between fans. As a consequence, we derive a classification of toric vector bundles phrased in terms of tropicalized linear spaces. We use these tools to give a characterization of the Mori dream space property for a projectivized toric vector bundle. 
\end{abstract}

\maketitle

\tableofcontents

\section{Introduction}
We use ideas from Gr\"obner theory and tropical geometry to give a classification of torus equivariant flat families over a toric variety base (toric flat families), far extending Klaychko's classification of equivariant vector bundles on toric varieties (toric vector bundles). We use our techniques to give an algorithm to establish that a given projectivized toric vector bundle is a Mori dream space. In particular, this recovers many known constructions of such Mori dream spaces in the literature (\cite{Gonzalez}, \cite{GHPS}, \cite{Hausen-Suzz}, \cite{Nodland}) and gives a systematic way to produce many new classes of examples. Below we first give an overview of the paper and then explain the main results in more detail.


Let $R$ be a positively graded finitely generated $\k$-domain with $X=\Proj(R)$ the corresponding irreducible projective variety with $\dim(X) = n$. Given a full rank valuation $\v: R\setminus\{0\} \to \Z^{n+1}$, the theory of Newton-Okounkov bodies assigns to $R$ a convex body $\Delta(R, \v) \subset \R^n$ which encodes asymptotic information about the Hilbert function of $R$ and hence geometric information about $(X, \mathcal{O}(1))$. When the value semigroup $S(R, \v):=\v(R \setminus\{0\})$ is finitely generated, we say that $(R, \v)$ has a \emph{finite Khovanskii basis}. In this case one has a flat family over the affine line $\mathbb{A}^1$ with generic fiber $X$ and the special fiber $X_0$, the projective toric variety associated to the semigroup $S(R, \v)$. Such a family is called a \emph{toric degeneration} (see \cite{Anderson}). The Khovanskii basis theory is concerned with the computational side of the Newton-Okounkov bodies theory. 
It provides a means to generalize Gr\"obner basis theory for polynomial rings to much larger classes of finitely generated algebras $R$ (see \cite{Kaveh-Manon-NOK, Ehrmann}). 

In this paper, we replace $\Z^{n+1}$-valued valuations with valuations with values in the semifield $\mathcal{O}_N$ of integral piecewise linear functions on a finite rank lattice $N$. We introduce a notion of Khovanskii basis in this setting. We see that, extending the construction of toric degenerations over an affine line from $\Z^{n+1}$-valued valuations, finite Khovanskii bases in this setting give rise to flat families with general fiber $X$ and over a toric variety base $Y(\Sigma)$. In particular, this applies to vector bundles over a toric variety $Y(\Sigma)$ and yields a systematic and algorithmic study of global coordinate rings (Cox rings) of projectivized toric vector bundles. We summarize our results in this regard as follows:

\begin{enumerate}
    \item[$\bullet$] After fixing some algebraic data, we classify toric vector bundles by points on a certain polyhedral fan (Proposition \ref{prop-diagramfan}), and we show that special faces of this fan characterize natural families of bundles with Mori dream space property (Proposition  \ref{prop-formal}). 
    \item[$\bullet$] We give an algorithm (Algorithm \ref{alg-StrongKhovanskii}) which constructs a finite generating set of the Cox ring of a projectivized toric vector bundle, provided one exists. The algorithm does not  terminate if one does not exist. 
    \item[$\bullet$] We characterize the generating sets of Cox rings of projectivized toric vector bundles using \emph{prime points} on an associated tropical variety (Theorem \ref{thm-main-StrongKhovanskii}).
\end{enumerate}

\noindent
The above recover several finiteness results from the literature (\cite{Gonzalez}, \cite{GHPS}, \cite{Hausen-Suzz}, \cite{Nodland}), and obtain new classification results for bundles with the Mori dream space property. 

In a future work, we plan to address applications of our approach to geometry of matroids. It seems that the point of view, introduced in the present paper, of regarding a toric vector bundle as a tropical point (over the semifield of piecewise linear functions), gives a very natural way to construct and extend the tautological classes of matroids as introduced in \cite{Eur}.

To prove the above results, we develop a theory of Khovanskii bases for valuations with values in the semialgebra of piecewise linear functions. This requires us to carefully study the category of vector spaces equipped with prevaluations into the piecewise linear semialgebra. An important ingredient is establishing an adjunction between this category and the category of $T$-equivariant quasi-coherent sheaves on a toric variety.
\\ 



\subsection{Classification of toric flat families}


We take the ground field $\k$ to be an algebraically closed field. We let $T_N$ to be the algebraic torus with cocharacter lattice $N$ and character lattice $M=\Hom(N, \Z)$. We denote by $Y(\Sigma)$ the toric variety associated to a fan $\Sigma$ in $N_\Q = N \otimes \Q$. We let $\O_\Sigma$ denote the set of all piecewise linear functions on $|\Sigma|$, the support of $\Sigma$, that are integral with respect to $N$. The set $\O_\Sigma$ together with the tropical operations of $\min$ and $+$ is a semifield which we refer to as the \emph{piecewise linear semifield} on $\Sigma$. In particular, for each $\phi \in \O_\Sigma$ there is a finite subdivision of $|\Sigma|$ into convex rational cones $\sigma$ such that the restriction $\phi\!\mid_\sigma$ is linear. When $|\Sigma| = N_\Q$ we denote this semifield by $\O_N$. 

\begin{definition} \label{def-tff-intro}
By a \emph{toric flat family} over $Y(\Sigma)$ we mean a flat affine family $\pi: \mathcal{X} \to Y(\Sigma)$ equipped with an action of the torus $T_N$ lifting its action on the toric variety $Y(\Sigma)$. In particular, the fibers of $\pi$ are affine schemes. 
\end{definition}


Toric flat families include many important and well-studied classes of varieties such as torus equivariant vector bundles and principal bundles on toric varieties (\emph{toric vector bundles} and \emph{toric principal bundles}), see \cite{Klyachko, Biswas, Kaveh-Manon-Building}. Another class of toric flat families are \emph{toric degenerations}, i.e. toric flat families over the affine line. Toric degenerations have a close connection with the theory of Newton-Okounkov bodies (see \cite{Anderson, KMM}) and toric degenerations of special classes of varieties have been studied extensively by various authors. 

Before stating our main results, we give a brief review of some previous work in the literature. Toric vector bundles were first classified by Kaneyama in \cite{Kaneyama} using certain cocycles.  Klyachko then gave a classification \cite{Klyachko} in terms of compatible $\Z$-filtrations in a vector space. More recently, the authors \cite{Kaveh-Manon-Building}, and Biswas, Dey, and Poddar \cite{Biswas, BDP-nonsing} have given different classifications of toric principal bundles. The classification of Biswas, Dey, and Poddar is along the lines of Kaneyama's, utilizing certain cocycles. The authors' classification in \cite{Kaveh-Manon-Building} is in terms of piecewise-linear maps to \emph{Tits buildings}, and should be viewed as an extension of Klyachko's result.

Klyachko's work has lead to a number of powerful results and generalizations. In \cite{Payne}, Payne uses Klyachko's classification to study moduli space of toric vector bundles over $Y(\Sigma)$ with fixed total Chern class.  In \cite{Perling}, Perling gives a Klyachko-like classification of torus-equivariant sheaves on a toric variety in terms of representations of certain quivers derived from the fan $\Sigma$. We also mention the work of DiRocco, Jabbusch and Smith \cite{DJS} where they give a criterion for global generation of a toric vector bundle in terms of an associated collection of convex polytopes called a \emph{parliament of polytopes}. 

The general approach in the present paper encompasses and extends many previous works in the literature. We describe the connection between \cite{DJS} in Section \ref{sec-DJS}. We also consider torus-equivariant sheaves in Section \ref{category}, where we attach a \emph{prevalued vector space} to any such sheaf. This leads to an adjunction of categories which is not an equivalence in general. Nevertheless, it is an equivalence on a subcategory which we call \emph{eversive} sheaves, and in particular contains reflexive sheaves. 


Klyachko's classification is very close to the classification of toric vector bundles  by tropical points (Theorem \ref{thm-main-bundle}) that we obtain as a special case of our classification of toric flat families (Theorem \ref{thm-main-family}). We remark that Theorem \ref{thm-main-bundle} is essentially contained in Payne's observation in \cite{PayneModuli, PayneFan} that the Klyachko data of a toric vector bundle can be used to construct a filtration-valued function on the support $|\Sigma|$ of the fan. 

Throughout we assume that $\mathcal{X}$ comes equipped with a $\mathbb{G}_m$-action preserving the fibers and commuting with the $T_N$-action. From the $T_N$-equivariance it follows that the fibers of $\mathcal{X}$ over the open orbit in $Y(\Sigma)$ (the general fibers) are all isomorphic to $\Spec(A)$, for a finitely generated positively graded $\k$-domain $A$. Specifically, we fix a point $x_0$ in the open orbit in $Y(\Sigma)$ and we take $A$ to be the coordinate ring of the fiber over $x_0$. One can think of $x_0$ as the identity element in the torus $T_N$. 


Let $\A$ be the sheaf of $\O_{Y(\Sigma)}$-algebras such that $\Spec_{Y(\Sigma)}(\A) = \mathcal{X}$.
The $\mathbb{G}_m$-action on $\mathcal{X}$ corresponds to a positive grading $\A= \bigoplus_{n \geq 0} \A_n$. Each $\A_n$ can be regarded as the sheaf of sections of a $T_N$-equivariant vector bundle on $Y(\Sigma)$. We thus can regard $\A$ as an infinite dimensional $T_N$-equviariant vector bundle on $Y(\Sigma)$. The Klyachko classification of toric vector bundles then implies that $\A$ corresponds to a system of filtrations on the $\k$-algebra $A$, regarded as an infinite dimensional vector space. In our setting, this system of filtrations gives rise to a map: $$\v: A \to \O_\Sigma.$$ When the family has reduced and irreducible fibers, it is straightforward to verify that the map $\v$ satisfies the axioms of a \emph{valuation} from an algebra to a semifield. 

We point out that in the valuation theory literature in commutative algebra, a valuation takes values in a totally ordered abelian group such as $\Z^d$. 
Given a domain $A$ and a valuation $\v: A \setminus\{0\} \to \Z^d$, the theory of Khovanskii bases attempts to generalize basic constructions and techniques from Gr\"obner theory of polynomial ring to $(A, \v)$ (see \cite{Kaveh-Manon-NOK, Ehrmann}).  One of the purposes of the present paper is to show that valuations with values in the piecewise linear semifield are also important in algebraic geometry and are the right gadgets to classify toric flat families.
When the family $\mathcal{X}$ is finite-type, the information in $\v$ can be captured in its values on a certain set of generators $\mathcal{B} \subset A$. In sympathy with the case when $\v$ takes values in $\Z^d$, we call $\mathcal{B}$ a \emph{Khovanskii basis} of $\v$. 
The following is our main theorem summarizing these ideas (see Section \ref{Khovanskii}).

\begin{theorem}[Toric flat families as valuations] \label{thm-main-family}
Let $\pi: \mathcal{X} \to Y(\Sigma)$ be a toric flat family of finite type with reduced, irreducible fibers and let $A$ denote the coordinate ring of general fibers. Such families are classified by valuations 
$\v: A \to \O_\Sigma$ having a finite Khovanskii basis $\mathcal{B}$ with the following property: for each $\sigma \in \Sigma$, $A$ has a certain vector space basis $\mathbb{B}_\sigma$ consisting of monomials in $\mathcal{B}$ such that $\v(b)_{| \sigma}$ is a linear function, for all $b \in \mathbb{B}_\sigma$. We refer to $\mathbb{B}_\sigma$ as a \emph{linear adapated basis} (see Section \ref{freesection}).
\end{theorem} 

Next we interpret the data of a valuation $\v: A \to \O_\Sigma$ as a \emph{piecewise linear map} between fans. 
This point of view is motivated by the classification of toric vector bundles in terms of piecewise linear maps to the Tits buildings of general linear groups (see \cite{Kaveh-Manon-Building}, also \cite{Klyachko}). As above let $\mathcal{B}$ be a finite Khovanskii basis for $(A, \v)$. Let $I$ be the ideal of relations among the generating set $\mathcal{B}$. It gives a presentation of $A$ as a quotient of a polynomial ring by $I$. In Section \ref{Khovanskii} we define a certain subfan $\mathcal{K}(I)$ of the Gr\"obner fan of $I$. Motivated by the theory of buildings we define an \emph{apartment} in $\mathcal{K}(I)$ to be the subcomplex obtained by intersecting  $\mathcal{K}(I)$ with a maximal face $\tau$ of the Gr\"obner fan of $I$. 
Our next result, roughly speaking, states that different ways to ``fold'' the fan $\Sigma$ into the fan $\mathcal{K}(I)$ correspond to distinct toric flat families over $Y(\Sigma)$ with general fiber $\Spec(A)$ and Khovanskii basis $\mathcal{B}$. 
More precisely, we have the following (Proposition \ref{prop-diagram}). 

\begin{corollary}[Toric flat families as peicewise linear maps] \label{cor-main-family}
The information of a toric flat family $\pi:\mathcal{X} \to Y(\Sigma)$ with a finite Khovanskii basis $\mathcal{B}$ is equivalent to a 
piecewise linear map $\Phi: |\Sigma| \to \mathcal{K}(I)$ which maps each cone in $\Sigma$ to an apartment in $\mathcal{K}(I)$. We call such a piecewise linear map a \emph{$\Sigma$-adapted map} (see Section \ref{Khovanskii}).
\end{corollary}

The above corollary says that $\mathcal{K}(I)$ can be thought of as a ``classifying space'' for the toric flat families with general fiber $\Spec(A)$ and Khovanskii basis $\mathcal{B}$. 

In Section \ref{Khovanskii} (Proposition \ref{prop-diagram}) we show that $\Phi$ is captured by its values on the rays of $\Sigma$, which we organize into a matrix $D(\Phi)$ called the \emph{diagram} of $\Phi$.  This gives a systematic combinatorial method for constructing toric flat families over $Y(\Sigma)$. For example when $\Sigma$ is a simplicial fan, a diagram corresponds to choosing a point $w(\varrho) \in \mathcal{K}(I)$, for every ray $\varrho \in \Sigma(1)$, such that for all $\varrho$ from a face $\sigma \in \Sigma$ the $w(\varrho)$ land in a common apartment of $\mathcal{K}(I)$.  

When the family has reduced and irreducible fibers, and hence $\v$ is a valuation, $\Phi = \Phi_\v$ in fact takes values in the tropical variety $\Trop(I)$. In the particular case when the toric family is a toric vector bundle we get the following. 
\begin{theorem}[Toric vector bundles as tropical points] \label{thm-main-bundle}
Let $E$ be an $r$-dimensional vector space. 
The toric vector bundles over $Y(\Sigma)$, with $E$ as general fiber, correspond to valuations $\v: \Sym(E) \to \O_\Sigma$ such that for each $\sigma \in \Sigma$, the restriction $\v\!\mid_\sigma: \Sym(E) \to \O_\sigma$ has a Khovanskii basis given by the monomials of a vector space basis $\mathcal{B}_\sigma \subset E$.
In this case, the ideal $I$ is a linear ideal $L$ and one can think of the information of a toric vector bundle in three differnt ways:
\begin{itemize}
\item[(i)] As a point $\Phi \in \Trop_{\O_\Sigma}(L)$ in the tropical variety over the semifield $\O_\Sigma$ (see Section \ref{background} for the notion of tropical variety over a semifield).
\item[(ii)] As a piecewise linear map from $|\Sigma|$ to $\Trop(L)$ (see Section \ref{Khovanskii}). 
\item[(iii)] As a matrix with rows in $\Trop(L)$ (see Section \ref{Khovanskii}).    
\end{itemize}
\end{theorem}

The idea in part $\textup{(ii)}$ of Theorem \ref{thm-main-bundle}, to encode the data of a toric vector bundle as a family of filtrations of a vector space indexed by the points in the support of the fan $|\Sigma|$, is not new and first appeared in the work of Payne \cite{PayneFan}.  This construction was used in the work of Hering, Payne, and Musta\c{t}\u{a} \cite{HMP} to give a combinatorial characterization of ampleness for toric vector bundles.  Corollary \ref{cor-main-family} and Theorem \ref{thm-main-bundle} can be thought of as extensions of the well-known fact that toric line bundles over $Y(\Sigma)$ are classified by integral piecewise linear maps $\phi: |\Sigma| \to \Z$, to toric vector bundles: $\Phi: |\Sigma| \to \Trop(L)$, and toric flat families: $\Phi: |\Sigma| \to \mathcal{K}(I)$.

\subsection{Mori dream space property and strong Khovanskii bases}
Next we address the problem of finite generation of the total coordinate ring of a toric flat family. In general, this is a notoriously difficult problem. For example, a very special case of this problem that asks which projectivized toric vector bundles are Mori dream spaces is already very difficult and has been subject of much recent research. Our valuation theory approach gives a systematic way to attack this problem and to recover many of the previously known examples of such Mori dream spaces as well as many new examples and constructions.

The question of when a projectivized toric vector bundle is a Mori dream space goes back to the work of Hering, Musta\c{t}\u{a}, and Payne \cite{HMP}. The first results along these lines are found in the works of Gonz\'alez \cite{Gonzalez}  and Hausen and S\"u\ss \ \cite{Hausen-Suzz}.  In \cite{Gonzalez}, Gonz\'alez shows that the projectivization of every rank $2$ vector bundle over a projective toric variety is a Mori dream space.  In \cite{Hausen-Suzz}, Hausen and S\"u\ss \ show that the projectization of the tangent bundle of any smooth, projective toric variety is a Mori dream space.  The common thread of these two results is that the Mori dream space property can be obtained by restricting the number of steps in the Klyachko filtrations of the associated bundle to at most two.  Such bundles are called \emph{sparse} bundles in Section \ref{sec-Cox-TVB}.  Sparse bundles are shown to always have Mori dream space projectivizations by Gonz\'alez, Hering, Payne, and S\"u\ss \ in \cite{GHPS} and by N\"odland in \cite{Nodland}.   We recover this result as Corollary \ref{cor-sparse} of Proposition \ref{prop-formal}.  In particular, the sparse condition is a special case of membership in certain faces of the fan which we introduce  (see Proposition \ref{prop-weakfan}).  Using this tropical approach, we can produce new families of bundles whose projectivizations are Mori dream spaces.  For example, the \emph{uniform} bundles analyzed in Corollary \ref{cor-uniform} are defined by requiring the ideal $I$ to be a generic linear ideal. 

In the usual Khovanskii basis theory, one considers a finitely generated domain $A$ equipped with a valuation $\v: A \setminus \{0\} \to \Z^d$. A set $\mathcal{B} \subset A$ is then a \emph{Khovanskii basis} for $(A, \v)$ if the images of the elements in $\mathcal{B}$ generate the associated graded algebra $\textup{gr}_\v(A)$. Under mild assumptions, it follows that $\mathcal{B}$ is a generating set for $A$ as well. When $\mathcal{B}$ is finite, essentially one can reduce computational problems regarding $A$ and ideals in it to computations in $\textup{gr}_\v(A)$. There is an algorithm to extend a given subset of $A$ to a finite Khovanskii basis. This algorithm terminates in finite time if and only $(A, \v)$ has a finite Khovanskii basis (\cite[Algorithm 2.18 and Corollary 2.19]{Kaveh-Manon-NOK}).

We apply these ideas to the problem of finite generation of the total coordinate ring of a toric flat family. We give an analogue of the algorithm above for a valuation $\v: A \to \mathcal{O}_\Sigma$ to find a finite Khovanskii basis for the total coordinate ring of the family. 
More precisely, let $\A$ be a sheaf of algebras on a smooth toric variety $Y(\Sigma)$. 
In Section \ref{StrongKhovanskii} we consider the total coordinate ring $$\mathcal{R}(\mathcal{A}) = \bigoplus_{\mathcal{L} \in \Pic(Y(\Sigma))} H^0(Y(\Sigma), \mathcal{L}\otimes_{Y(\Sigma)} \A)$$ of global sections of $\A$. In the case of a toric vector bundle $\E$, this ring corresponds to the Cox ring $\mathcal{R}(\E)$ of the associated projective bundle $\mathbb{P}\E$. Let $\v: A \to \O_\Sigma$, $\Phi:|\Sigma| \to \mathcal{K}(I)$ and diagram $D(\Phi)$ be the quasivaluation, piecewise linear map and the diagram corresponding to the family respectively. We wish to give conditions on $\Phi: |\Sigma| \to \mathcal{K}(I)$, or equivalently the diagram $D(\Phi)$, which ensure that $\mathcal{R}(\mathcal{A})$ is finitely generated, and understand the piecewise linear geometry of the corresponding space of maps $\Phi$.  

This leads us to define the notion of a strong Khovanskii basis. A generating set $\mathcal{B} \subset A$ is said to be a \emph{strong Khovanskii basis} if it is the image of a generating set of $\mathcal{R}(\mathcal{A})$  (Section \ref{StrongKhovanskii}).  A strong Khovanskii basis is always a Khovanskii basis, but not vice-versa. In general, finite strong Khovanskii bases do not exist.   Our first contribution in this regard is Algorithm \ref{alg-StrongKhovanskii}, which constructs a finite strong Khovanskii basis if one does exist.  This should be compared to \cite[Algorithm 2.18]{Kaveh-Manon-NOK}. In particular, when applied to the Cox ring $R(\E)$ of a toric vector bundle, Algorithm \ref{alg-StrongKhovanskii} will terminate in finite time if and only if the Cox ring $R(\E)$ is finitely generated. In this case, the constructed set $\mathcal{B} \subset \Sym(E)$ will contain, as its degree $1$ piece, a representation of the matroid defined by DiRocco, Jabbusch, and Smith in \cite{DJS}.    We mention the interesting work of N\o dland \cite{Nodland}, where it is shown that a generating set of $R(\E)$ can be taken to be a (possibly infinite) union of (representations of) DiRocco-Jabbusch-Smith matroids. 

More generally, we characterize the existence of finite strong Khovanskii bases for $\A$ with reduced and irreducible fibers (that is, for valuations $\v: A \to \O_\Sigma$) using the notion of \emph{prime points} on a tropical variety introduced in \cite{Kaveh-Manon-NOK}.  Let $n = |\Sigma(1)|$ be the number of rays in $\Sigma$. 



\begin{theorem}   \label{thm-main-StrongKhovanskii}
Let $(A, \v)$ be a valuation as above with Khovanskii basis $\mathcal{B} \subset A$ and diagram $D$, and let $\I_\mathcal{B}$ be the ideal as in Proposition \ref{prop-ideal-StrongKhovanskii}. Let $\tilde{w}_1, \ldots, \tilde{w}_n \in \Trop(\I_\mathcal{B})$ be the lifts of the rows of $D$ (see Theorem \ref{thm-primepointlift}). Then $\mathcal{B}$ is a strong Khovanskii basis if and only if each $\tilde{w}_i$ is a prime point, that is, the initial ideal corresponding to $\tilde{w}_i$ is prime.
\end{theorem}


We use Theorem \ref{thm-main-StrongKhovanskii} to study the Mori dream space property of projectivized toric vector bundles in several cases.  As an application, we give general combinatorial conditions on the diagram which characterize those Cox rings $R(\E)$ which are presented as a complete intersection (Proposition \ref{prop-formal}). 

In Section \ref{sec-diagrams} we introduce the set $\Delta(\Sigma, \Trop_\star(I))$ (Definition \ref{def-diagramfan}) of diagrams $D(\Phi)$ for bundles over $Y(\Sigma)$ with fiber $E$ and Khovanskii basis $\mathcal{B} \subset \Sym(E)$.  We show that $\Delta(\Sigma, \Trop_\star(I))$ is the support of a polyhedral fan, and that the subset of diagrams for which $\mathcal{B}$ is a strong Khovanskii basis is a union of polyhedral cones (Proposition \ref{prop-coneStrongKhovanskii}).  


\begin{conjecture}\label{conj-fan}
The set $\Delta(\Sigma, \Trop_\star(I))$ is the support of a fan $\F$ with the property that the set of those diagrams corresponding to bundles for which $\mathcal{B}$ is a strong Khovanskii basis is a union of faces of $\F$. 
\end{conjecture}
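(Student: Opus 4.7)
The plan is to construct $\F$ as a common refinement of the existing polyhedral structure on $\Delta(\Sigma, \Trop_\star(I))$ from Proposition \ref{prop-coneStrongKhovanskii} with a subdivision pulled back from the Gr\"obner fan $\Sigma(\I_\mathcal{B})$ of the presentation ideal. By Theorem \ref{thm-main-StrongKhovanskii}, the strong Khovanskii property of $D(\Phi)$ is governed by the primality of $\In_\w(\I_\mathcal{B})$ at lifts $\w$ of the rows of $D(\Phi)$ through $\pi : \Trop(\I_\mathcal{B}) \to \Trop(I)$; equivalently, each row $\Phi(\varrho)$ must lie in the image $\pi(C)$ of some prime cone $C \in \Sigma(\I_\mathcal{B})$, and these lifts must be chosen compatibly across the apartment structure of $\mathcal{K}(I)$.

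I would define $\F$ to be the common refinement of the existing fan structure on $\Delta(\Sigma, \Trop_\star(I)) \subseteq \Trop(I)^{|\Sigma(1)|}$ with the $|\Sigma(1)|$-fold product of the coarsest polyhedral subdivision of $\Trop(I)$ that refines every image $\pi(C)$ for $C \in \Sigma(\I_\mathcal{B})$. Common refinements of polyhedral fans are polyhedral fans, so $\F$ is a fan. By construction, on each cone $\tau \in \F$ the choice of cone of $\Sigma(\I_\mathcal{B})$ whose image contains the $\varrho$-th row is constant as $D(\Phi)$ varies in $\tau$. Therefore the primality of this chosen lift-cone, together with whether the chosen cones assemble into a compatible tuple recording a coherent prime cone of $\Sigma(\I_\mathcal{B})$, depends only on $\tau$. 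This already realizes the strong Khovanskii locus as a union of relatively open cones of $\F$.

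The main obstacle, which is the full content of the conjecture, is to show that this union is closed under taking faces --- equivalently, that it is a \emph{subfan} of $\F$, not merely a union of open cones. The subtlety is that primality of initial ideals is not in general preserved under specialization in the Gr\"obner fan, so a face $\tau' \leq \tau$ of a strong Khovanskii cone $\tau$ could a priori be non-strong Khovanskii. Overcoming this will require a further refinement step that introduces as walls the boundaries of the locus where $\In_\w(\I_\mathcal{B})$ becomes non-prime, together with a proof that this prime locus is itself supported on a subfan of $\Sigma(\I_\mathcal{B})$. Combined with a compatibility verification along the apartment structure of $\mathcal{K}(I)$ --- ensuring that apartment-induced gluings between rays of $\Sigma$ do not force the prime-lifting data to be broken --- this should produce a refinement of $\F$ in which the strong Khovanskii locus coincides exactly with a union of closed cones closed under taking faces. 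The hard step is therefore the polyhedral and deformation-theoretic analysis of how the prime locus inside $\Sigma(\I_\mathcal{B})$ behaves under the projection $\pi$ and under the apartment-induced identifications between cones of $\Sigma$; I expect this to proceed by a case-by-case specialization argument analogous to the one underpinning Theorem \ref{thm-main-StrongKhovanskii}.
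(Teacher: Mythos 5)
This statement is Conjecture \ref{conj-fan}: the paper does not prove it. The only established results in this direction are Proposition \ref{prop-diagramfan} (the set $\Delta(\Sigma, \Trop_\star(I_\mathcal{B}))$ is the support of a polyhedral fan) and Proposition \ref{prop-coneStrongKhovanskii} (the strong Khovanskii locus is a union of polyhedral cones $C_\F$, one for each choice of generating set $\F$ as in Lemma \ref{lem-goodbasis}); the face-structure assertion is explicitly left open. Your proposal does not close that gap either: you yourself concede that ``the main obstacle, which is the full content of the conjecture'' is to show the locus is closed under passing to faces, and you defer this to an unspecified ``deformation-theoretic analysis.'' What you have written is a plan for attacking the conjecture, not a proof, and the part of it that is actually carried out (the locus is a union of relatively open cones of some refinement) is weaker than, or at best comparable to, what Proposition \ref{prop-coneStrongKhovanskii} already gives.

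There is also a concrete problem with the setup of your construction. You treat $\Sigma(\I_\mathcal{B})$, the projection $\pi: \Trop(\I_\mathcal{B}) \to \Trop(I_\mathcal{B})$, and the images $\pi(C)$ of its prime cones as fixed objects to be pulled back and refined once and for all. But $\I_\mathcal{B}$ is the kernel of $X_i \mapsto 1 \otimes t_i^{-1}$, $Y_j \mapsto b_j \otimes t^{\hat{\v}(b_j)}$, so the ideal, its tropical variety, and the maps $\pi$ and $s$ all depend on the diagram $D$ that is varying over $\Delta(\Sigma, \Trop_\star(I_\mathcal{B}))$. There is no single Gr\"obner fan whose common refinement with the product structure yields your $\F$; this is precisely why the paper's Proposition \ref{prop-coneStrongKhovanskii} works locally, fixing one diagram $D$ with finitely generated section ring and building the cone $C_\F$ from a generating set $\F \subset \I_\mathcal{B}$ whose initial forms behave well for that particular $D$ (Lemma \ref{lem-goodbasis}), then propagating finite generation by an iterated Rees-algebra argument. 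Any serious attempt at the conjecture has to either control how $\I_\mathcal{B}$ and its prime cones vary with $D$, or find an intrinsic description of the strong Khovanskii locus that does not reference the $D$-dependent lift; your sketch addresses neither.
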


Proposition \ref{prop-weakfan} establishes a weak form of this conjecture. We use a related result (Proposition \ref{prop-add-subtract}) to describe alterations to the diagram $D(\Phi)$ which preserve the Mori dream space property.  In particular, we give polyhedral conditions implying that the pullback of a Mori dream space bundle along a toric blow-up is a Mori dream space bundle (Corollary \ref{cor-toricblowup}). 






We finish the introduction by looking at two ingredients of our valuation theoretic/tropical approach to study toric flat families which we believe are of interest as their own separate topics.
\subsection{Tropical geometry over $\O_N$}
As before, let $N$ and $M$ be dual lattices. Let $\k(T_N)$ denote the field of rational functions on the torus $T_N$.  

\begin{definition}
For $p(\bt) = \sum C_m \bt^m \in \k[T_N]$ let $\w_N(p) = \min\{m \mid C_m \neq 0\}$.  In particular $\w_N(p)$ is the support function of the Newton polytope of $p$.  Let $\w_N: \k(T_N) \to \O_N$ denote the corresponding extension of $\w_N$ to the field of quotients: $\w_N(\frac{p}{q}) = \w_N(p) - \w_N(q)$. 
\end{definition}

Properties of Newton polytopes imply that $\w_N$ respects multiplication: $\w_N(pq) = \w_N(p) + \w_N(q)$, and addition: $\w_N(p + q) \geq \min\{\w_N(p), \w_N(q)\}$.   In particular, $\w_N$ is a \emph{valuation} from $\k(T_N)$ to $\O_N$, and the image of $\k[T_N] \subset \k(T_N)$ under $\w_N$ is precisely the set of support functions for polyhedra with vertices in $M$. Moreover, it is well-known that any piecewise-linear function can be represented as the difference of convex functions, it follows that $\w_N$ is onto. In  \cite{SpeyerSturmfelsTropMath} it was suggested by Sturmfels and Speyer that one could develop tropical geometry over a tropical semiring of polyhedra; we view Theorem \ref{thm-main-bundle} as a first step in this direction: the points on tropical linear spaces over $\O_N$ correspond to toric vector bundles.   In Section \ref{examples} we explore this perspective further by showing that the operation of \emph{tropicalizing} the solution to a system of equations over $\k(T_N)$ can be used to define toric vector bundles and flat toric families.  Asking if all such bundles arise in this way is the analogue of asking if the \emph{Fundamental Theorem of Tropical Geometry} \cite{MSt} holds over $\O_N$. 

\subsection{The category of prevalued vector spaces}

We let $\hat{\O}_\Sigma$ denote a semiring which consists of certain limits of elements of $\O_\Sigma$ (see Section \ref{category}).  In Section \ref{category}, we introduce the category $\Vect_\Sigma$ of vector spaces equipped with a \emph{prevaluation} into $\hat{\O}_\Sigma$.  For the technical core of the paper, we show that there is an adjoint pair of functors $\mathcal{L}, \mathcal{R}$ which relate $\Vect_\Sigma$ to the category $\Sh_{Y(\Sigma)}^M$ of $T_N$-equivariant sheaves on the toric variety $Y(\Sigma)$.  Locally for $\sigma \in \Sigma$, the operation $\mathcal{R}: \Vect_\sigma \to \Sh_{Y(\sigma)}^M$ is akin to taking the Rees algebra of a filtration, and $\mathcal{L}: \Sh_{Y(\sigma)}^M \to \Vect_\sigma$ is its left adjoint. This is related to work of Perling \cite{Perling} and Klyachko \cite{Klyachko} by the natural relationship between filtrations and prevaluations (see Section \ref{background}).  For a general $T_N$-sheaf $\F$, the corresponding prevaluations may take values in functions from $\hat{\O}_\Sigma$ which have an infinite number of domains of linearity, or may take infinite values, depending on the algebraic properties of $\F$.  For nicely behaved sheaves such as those which are locally projective or locally free, there is no such issue and we get a prevaluation with values in $\O_\Sigma$.  On these sheaves and a broader category of sheaves which we call \emph{eversive} sheaves, the functors $\mathcal{L}$ and $\mathcal{R}$ become an equivalence, and we obtain classification results for toric sheaves by prevaluations. It is shown in Example \ref{ex-reflexive} that reflexive sheaves are eversive but not vice versa. Here the term ``eversive" comes from the word ``eversion," meaning the act of turning something inside out.  The term is meant to suggest taking the prevaluation associated to a sheaf is like revealing inside of the sheaf as in turning it inside out.\\

\noindent{\bf Acknowledgements:} We would like to thank Laura Escobar and Megumi Harada for patiently reading the paper and giving valuable comments. We also thank Sam Payne for useful conversations. We thank Courtney George for a much clearer formulation of Corollary \ref{cor-uniform}. 

\vspace{.5cm}

\noindent{\bf Notation:}
\begin{itemize}
\item $N$, a finite rank lattice with dual lattice $M$. We let $N_\R = N \otimes \R$ and $M_\R = M \otimes \R$.
\item $\rho$, an element of $N$.
\item $\Sigma$, a rational polyhedral fan in $N_\Q$.
\item $\sigma$, a rational polyhedral cone in $N_\Q$, usually a face of $\Sigma$.
\item $\varrho$, a ray in $\Sigma$, namely an element of $\Sigma(1)$.
\item $\O_N$, the semifield of integral piecewise linear functions on $N_\R$ with the operations $\min$ and $+$.
\item $\O_\Sigma$, the semifield of piecewise linear functions on $|\Sigma| \subset N_\R$.
\item $Y(\sigma)$, the affine toric variety corresponding to a polyhedral cone $\sigma \subset N_\R$.
\item $S_\sigma$, the coordinate ring of $Y(\sigma)$.
\item $Y(\Sigma)$, the toric variety corresponding to a rational polyhedral fan $\Sigma$.
\item $\v$, a valuation or quasivaluation (Section \ref{background}).
\item $\gr_\v(A)$, the associated graded algebra of a quasivaluation $\v$ (Section \ref{background}).
\item $\Trop(I)$, the tropical variety of $I \subset \k[y_1, \ldots, y_m]$ (Section \ref{background}).
\item $\Vect_\Sigma$, the category of prevalued vector spaces with values in $\O_\Sigma$ (Section \ref{category}).
\item $\Mod^M_{S_\sigma}$, the category of $M$-graded $S_\sigma$-modules (Section \ref{category}).
\item $\Sh^M_{Y(\Sigma)}$, the category of $T_N$-equivariant quasicoherent sheaves on $Y(\Sigma)$ (Section \ref{category}).
\item $\mathcal{B} \subset \k[y_1, \ldots, y_m]/I \cong A$, the image of the polynomial generators $\{y_1, \ldots, y_m\}$ under the presentation map $\k[y_1, \ldots, y_m] \to A$ (Section \ref{Khovanskii}).
\item $\mathcal{K}(I)$, the set of quasivaluations with Khovanskii basis $\mathcal{B}$  (Section \ref{Khovanskii}).  
\item $\GF(I)$, the Gr\"obner fan of $I \subset \k[y_1, \ldots, y_m]$ (Section \ref{Khovanskii}). 
\item $\Phi: |\Sigma| \to \Trop(L)$, a piecewise linear map to the tropicalized linear space associated to a linear ideal $L \subset \k[y_1, \ldots, y_m]$ (Section \ref{Khovanskii}).
\item $D(\Phi)$, the diagram of $\Phi$ (Section \ref{Khovanskii}).
\item $\Delta(\Sigma, L)$, the set of diagrams adapted to $\Sigma$ with rows in $\Trop(L)$ (Section \ref{StrongKhovanskii}).
\item $\mathcal{R}(A, \hat{\v})$, the total section ring associated to a valuation $\v: A \to \O_\Sigma$ (Section \ref{StrongKhovanskii}).
\item $\mathcal{R}(\E)$, the Cox ring of the projectivized toric vector bundle $\mathbb{P}\E$ (Section \ref{sec-Cox-TVB}).
\end{itemize}

\section{Background on valuations}\label{background}
In this section we collect the basic definitions and background material for prevaluations on vector spaces, and valuations and quasivaluations on algebras. We start by introducing the notion of a \emph{valuation} on an algebra with values in an idempotent semialgebra. We call a valuation on a vector space a \emph{prevaluation}. 
The term prevaluation appears in \cite[Section 2.1]{KK} where the notion of a prevaluation with values in a totally ordered group is defined. 

\begin{definition}
A \emph{semialgebra} $(\O, \oplus, \odot)$ is a set $\O$ with two binary operations $\oplus$ and $\odot$ that satsify the same axioms as addition and multiplication in a ring, with the exception that there are not necessarily additive inverses. We denote the identity element with respect to $\oplus$ by $\infty$. If all the elements beside the additive inverse $\infty$ have multiplicative inverses then $\O$ is called a \emph{semifield}. 
A semialgebra $\O$ is called \emph{idempotent} if $a \oplus a = a$ for all $a \in \O$. 
\end{definition}

First important example of an idempotent  semifield is the set $\overline{\Q} = \Q \cup \{\infty\}$ together with the operations $a \oplus b = \min(a, b)$ and $a \odot b = a + b$. It is usually referred to as the \emph{tropical semifield}. Clearly $\overline{\Z}=\Z \cup \{\infty\}$ is a subsemifield of $\overline{\Q}$. 

The following are important examples of (idempotent) semifields and semialgebra in this paper. 
\begin{example}[Semifield of functions on a set]
Let $\O_X$ be the set of $\overline{\Q}$-valued functions on a set $X$ with addition $\oplus$ and multiplication $\odot$ to be the pointwise $\min$ and $+$ of functions respectively. The additive identity, also denoted $\infty$, is the function which assigns $\infty$ to every point. 
\end{example}

\begin{example}[Semifield of piecewise linear functions on a lattice] \label{ex-PL-semifield}
Let $N$ be a lattice, that is, a free abelian group of finite rank. Ley $N_\Q = N \otimes \Q$. Recall that a \emph{piecewise linear function} on $N_\Q$ is a function $f: N_\Q \to \Q$ for which there is a complete fan in $N_\Q$ such that $f$ is linear restricted to each cone in the fan. A piecewise linear function $f$ is called \emph{integral} if it maps $N$ to $\Z$. The set of all piecewise linear functions on $N_\Q$ together with operations of $\min$ and $+$ of functions is a semifield. The set of integral piecewise linear functions is a subsemifield which we denote by $\O_N$. 

More generally, if $\Sigma$ is a (not necessarily complete) fan in $N_\Q$, we define $\O_\Sigma$ to be the semifield of all integral piecewise linear function on $|\Sigma|$, the support of $\Sigma$.
This semifield plays a most important role in this paper.  We will also need the semialgebra $\hat{\O}_\Sigma$ obtained by taking limits of elements in $\O_\Sigma$. More precisely, $\hat{\O}_\Sigma$ is the semialgebra of functions $\psi: |\Sigma| \to \overline{\Q}$ such that $\psi(\ell\rho) = \ell \psi(\rho)$ for all $\ell \in \Q_{\geq 0}$ and $\rho \in |\Sigma|$. Note that a function $\psi \in \hat{\O}_\Sigma$ is allowed to attain infinite values on points of $\Sigma$. Hence $\hat{\O}_\Sigma$ is not a semifield. 
\end{example}

\begin{example}[Semialgebra of convex polytopes]
Let $M$ be the dual lattice to $N$. The set of all convex polytopes in $M_\R = M \otimes \R$ has a natural structure of a semialgebra. The sum of two polyotpes is defined to be the convex hull of their union, and the product of two polytopes is defined to be their Minkowski sum. The subset of lattice polytopes is a subsemialgebra. Recall that a lattice polytopes is a polytope whose vertices lie in $M$. The semialgebra of convex polytopes in $M_\R$ can be identified with the semialgebra of concave piecewise linear functions. The identification is given by sending a convex polytope to its \emph{support function}.    
\end{example}


Any idempotent semialgebra $\O$ has an intrinsic partial ordering $\succeq$ defined by $a \succeq b$ if $a \oplus b = b$. 

\begin{definition}   \label{def-valuation}
For a $\k$-algebra $A$, a \emph{quasivaluation} $\v: A \to \O$ is a function which satisfies the following:
\begin{enumerate}
\item $\v(fg) \succeq \v(f) \odot \v(g),$
\item $\v(f + g) \succeq \v(f) \oplus \v(g),$
\item $\v(C) = 0$, for all $C \in \k \setminus \{0\},$
\item $\v(0) = \infty.$
\end{enumerate}
A quasivaluation $\v$ is said to be a \emph{valuation} if $\v(fg) = \v(f) \odot \v(g)$. A \emph{prevaluation} on a vector space $E$ is a function which satisfies $(2)-(4)$ above. 
\end{definition}

Let $A$ be a $\k$-algebra with finite generating set $\mathcal{B}=\{b_1, \ldots, b_d\} \subset A$.  We let $\pi: \k[\bx] \to A$ be the associated presentation, with ideal $I_\mathcal{B} = \ker(\pi)$.  For any monomial $\bx^\alpha \in \k[\bx]$ with $\alpha = (\alpha_1, \ldots, \alpha_d)$ there is a function $ev_{\bx^\alpha}: \O^\mathcal{B} \to \O$ defined by $$ev_{\bx^\alpha}: (\psi_1, \ldots, \psi_d) \mapsto \bigodot_{i =1}^d \psi_i^{\odot \alpha_i}.$$ Following \cite[5.1]{Giansiracusa}, the tropical variety $\Trop_\O(I_\mathcal{B}) \subset \O^\mathcal{B}$ is defined to be the set of tuples $(\psi_1, \ldots, \psi_d)$ such that for any polynomial $\sum_{j = 1}^m C_j\bx^{\alpha(j)} \in I_\mathcal{B}$ we have:

\begin{equation}\label{bending}
 \bigoplus_{j=1}^m ev_{\bx^{\alpha(j)}} = \bigoplus_{j \neq i} ev_{\bx^{\alpha(j)}}
\end{equation}
for any $1 \leq i \leq m$.  The following is a well-known relationship between valuations and the tropical variety (see \cite{Payne, Giansiracusa}). 

\begin{proposition}\label{trop}
Let $\v: A \to \O$ be a valuation and $\mathcal{B} = \{b_1, \ldots, b_d\} \subset A$ a generating set. Then the tuple $(\v(b_1), \ldots \v(b_d)) \in \O^\mathcal{B}$ is a point in the tropical variety $\Trop_{\O}(I_\mathcal{B})$.  
\end{proposition}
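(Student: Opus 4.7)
The plan is to verify the bending relation (\ref{bending}) directly for an arbitrary polynomial $f = \sum_{j \in [m]} C_j \bx^{\alpha(j)} \in I_\mathcal{B}$ and an arbitrary index $i \in [m]$, applied to the tuple $\psi = (\v(b_1), \ldots, \v(b_d))$. First I would use multiplicativity of $\v$ (axiom (1) of Definition \ref{def-valuation}, which holds with equality since $\v$ is a valuation) to identify the monomial evaluation $ev_{\bx^{\alpha(j)}}(\psi)$ with $\v(b^{\alpha(j)}) := \v\bigl(b_1^{\alpha(j)_1} \cdots b_d^{\alpha(j)_d}\bigr)$. Under this identification the bending relation at index $i$ reduces to the equality
\[
\bigoplus_{j \in [m]} \v(b^{\alpha(j)}) = \bigoplus_{j \in [m] \setminus \{i\}} \v(b^{\alpha(j)}).
\]

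Next, I would exploit that $f$ lies in $I_\mathcal{B} = \ker \pi$, so the relation $\sum_{j \in [m]} C_j b^{\alpha(j)} = 0$ holds in $A$. Solving for the $i$-th term gives $-C_i b^{\alpha(i)} = \sum_{j \neq i} C_j b^{\alpha(j)}$. Applying $\v$ to the left side and invoking multiplicativity together with axiom (3) (that nonzero scalars have $\v$-value $0$) yields $\v(b^{\alpha(i)})$. Applying $\v$ to the right side and combining axioms (1) and (2) produces the lower bound $\bigoplus_{j \neq i} \v(b^{\alpha(j)})$ in the intrinsic partial order $\succeq$. Thus I arrive at the key inequality
\[
\v(b^{\alpha(i)}) \succeq \bigoplus_{j \neq i} \v(b^{\alpha(j)}).
\]

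The last step is to unwind the definition $a \succeq b \iff a \oplus b = b$: the displayed inequality says exactly that $\v(b^{\alpha(i)}) \oplus \bigoplus_{j \neq i} \v(b^{\alpha(j)}) = \bigoplus_{j \neq i} \v(b^{\alpha(j)})$, which is the desired bending relation at index $i$. Since $f$ and $i$ were arbitrary, this shows $\psi \in \Trop_\O(I_\mathcal{B})$. I do not anticipate any real obstacle; the argument is a semifield-valued analogue of the classical fact that the minimum valuation among the monomial terms of a vanishing polynomial is attained at least twice. The only point worth careful attention is the translation between the partial order $\succeq$ on $\O$ and the behavior of the idempotent sum $\oplus$, which is precisely what powers the final step.
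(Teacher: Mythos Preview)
Your argument is correct and is the standard one. The paper does not actually supply a proof of this proposition; it merely cites \cite{Payne} and \cite{Giansiracusa} for this well-known fact, and your direct verification of the bending relations via the valuation axioms is precisely the argument one finds there. (One small implicit point: you use $C_i \neq 0$ when isolating the $i$-th term, which is harmless since the $C_j$ are understood to be the nonzero coefficients of $f$.)
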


If $\v: E \to \O$ is a prevaluation and $\psi \in \O$, we let $F_\psi(\v) = \{f \mid \v(f) \geq \psi\}$. It is straightforward to verify that $F_\psi(\v)$ is a subspace of $E$.  We use a different notation to distinguish the special case $\O = \overline{\Q}$, setting $G_r(\v) = \{f \mid \v(f) \geq r\}$ for $r \in \overline{\Q}$. Similarly, we let $G_{> r}(\v) = \{f \mid \v(f) > r \}$. The spaces $G_r(\v)$ fit together to form a $\Q$ filtration of $E$ by $\k$ vector spaces. The \emph{associated graded} vector space $\gr_\v(E)$ is defined to be the direct sum:
\begin{equation}  \label{equ-gr-E}
\gr_\v(E) = \bigoplus_{r \in \Q} G_r(\v)/G_{> r}(\v).
\end{equation} 
If $\v: A \to \overline{\Q}$ is a quasivaluation, it is straightforward to show that $\gr_\v(A)$ is $\k$-algebra.   The function $\v$ can be recovered as $\v(f) = \max\{ r \mid f \in G_r(\v) \}$, where $\v(f)$ is taken to be $\infty$ if the maximum is never attained.  The following definition is from \cite[Section 2.5]{Kaveh-Manon-NOK}.

\begin{definition}\label{adaptedbasis}
A vector space basis $\B \subset E$ is said to be an \emph{adapted basis} for a prevaluation $\v: E \to \overline{\Q}$ if $\B \cap G_r(\v)$ is a basis of $G_r(\v)$ for each $r \in \Q$. 
\end{definition}
If $b_i$, $i =1, \ldots k$ are from an adapted basis then $\v(\sum_{i =1}^k C_i b_i) = \oplus_{i =1}^k \v(b_i)$. This identity simplifies computations for prevaluations with adapted bases.  

We finish this section by recalling the notion of Khovanskii basis for a quasivaluation on an algebra with values in $\overline{\Q}$. Khovanskii bases for quasivaluations with values in a free abelian group are the subject of \cite{Kaveh-Manon-NOK}. 

\begin{definition}\label{khovanskiibasisdefinition}
Let $A$ be a $\k$-algebra, and $\v: A \to \overline{\Q}$ be a quasivaluation. A subset $\mathcal{B} \subset A$ is said to be a \emph{Khovanskii basis} of $\v$ if the equivalence classes $\overline{\mathcal{B}} \subset \gr_\v(A)$ generate $\gr_\v(A)$ as a $\k$-algebra. 
\end{definition}

\section{Equivariant sheaves and the category $\Vect_\Sigma$}\label{category}

Recall that $\O_\Sigma$ is the semifield of piecewise linear functions with a finite number of linear domains defined on $|\Sigma|$, and $\hat{\O}_\Sigma$ is the semialgebra of functions $\psi: |\Sigma| \to \overline{\Q}$ such that $\psi(\ell\rho) = \ell \psi(\rho)$ for all $\ell \in \Q_{\geq 0}$ and $\rho \in |\Sigma|$. A function $\psi \in \hat{\O}_\Sigma$ is allowed to attain infinite values on points of $\Sigma$ and thus $\hat{\O}_\Sigma$ is not a semifield.  We regard elements in $\hat{\O}_\Sigma$ as limits of elements in $\O_\Sigma$ (see the end of Example \ref{ex-PL-semifield}). 

In this section, we define a category $\Vect_\Sigma$ of vector spaces equipped with a \emph{prevaluation} into $\hat{\O}_\Sigma$.  

\begin{definition}\label{prevaluedvectorspaces}
Let $\Vect_\Sigma$ be the category whose objects are pairs $(E, \v)$, where $E$ is $\k$-vector space and $\v: E \to \hat{\O}_\Sigma$ is a prevaluation over $\k$. We call a pair $(E, \v)$ a \emph{prevalued vector space}. A morphism $\phi: (E, \v) \to (D, \w)$ of prevalued vector spaces is a $\k$-linear map $\phi: E \to D$ such that $\v(f) \leq \w(\phi(f))$ for all $f \in E$. We note that, strictly speaking, $\Vect_\Sigma$ only depends on the support of $\Sigma$.  
\end{definition}

\begin{definition}
For any prevalued vector space $(E, \v)$ and $\rho \in |\Sigma|$ we let $\v_\rho: E \to \overline{\Q}$ denote the prevaluation obtained by composition with $\rho$: $$\v_\rho(f) = \v(f)(\rho).$$ We let $G^\rho_r(\v) = \{ f \mid \v(f)(\rho) \geq r\}$. 
\end{definition}

Observe that $G^\rho_r(\v)$ is the $r$-th filtration space $G_r(\v_\rho)$ of the $\overline{\Q}$-filtration associated to $\v_\rho$.  The spaces $G^\rho_r(\v)$ can also be used to recover the prevaluation $\v$ as $\v(f)(\rho) = \max\{r \mid f \in G^\rho_r(\v)\}$.  We sometimes refer to the spaces $G^\rho_r(\v)$ as the \emph{Klyachko spaces} of $\v$. The filtrations defined by these subspaces are the main players in Klyachko's classification of torus equivariant vector bundles on toric varieties \cite{Klyachko}. 

\begin{definition}
We let $\Sh_{Y(\Sigma)}^M$ denote the category of $T_N$-equivariant quasi-coherent sheaves of $\mathcal{O}_{Y(\Sigma)}$-modules on the toric variety $Y(\Sigma)$.  
\end{definition}

We will construct a functor $\L: \Sh_{Y(\Sigma)}^M \to \Vect_\Sigma$, and show
that a sheaf $\F \in \Sh_{Y(\Sigma)}^M$ is determined by its image, the prevalued vector space $\L(\F)$, if it is from a certain full subcategory $\Sh_{Y(\Sigma)}^{M, ev}$. We do this by constructing an adjoint functor $\mathcal{R}$ to $\mathcal{L}$ over each affine toric chart (Section \ref{functors}). This construction enables us to prove Theorem \ref{thm-main-family} and show that a toric flat family $\pi: \mathcal{X} \to Y(\Sigma)$ is determined by a finite combinatorial data which we call the \emph{diagram} of $\pi$ in Section \ref{Khovanskii}. 

\subsection{Properties of $\Vect_\Sigma$}
We need to define a few constructions and operations on prevalued vector spaces. 

\begin{definition} 
For a vector space $E$ and $\psi \in \hat{\O}_\Sigma$, we let $(E, \psi) \in \Vect_\Sigma$ denote the vector space equipped with the prevaluation which assigns to every non-zero element of $E$ the function $\psi$.
\end{definition}

\begin{definition}\label{def-directsum}
For $(E, \v), (D, \w) \in \Vect_\Sigma$ the \emph{direct sum} is $(E, \v) \oplus (D, \w) = (E \oplus D, \v \oplus \w)$, where $(\v \oplus \w )(f + g) = \v(f) \oplus \w(g) \in \hat{\O}_\Sigma$. 
\end{definition}

It is straightforward to show that $\oplus$ is both a product and a coproduct in $\Vect_\Sigma$ with the trivial prevalued vetor space $(\{0\}, \infty)$ as the identity object. 
We use $\oplus$ to extend the notion of adapted basis for pervaluations with values in $\overline{\Q}$ (Definition \ref{adaptedbasis}) to prevaluations with values in $\hat{\O}_\Sigma$. 

\begin{definition}\label{linearadaptedbasis}
We say $(E, \v)$ has an \emph{adapted basis} $\B \subset E$ if the natural maps $(\k, \v(b)) \to (E, \v)$, $1 \to b$ define an isomorphism $(E, \v) \cong \bigoplus_{b \in \B} (\k, \v(b))$. Here $(\k, \v(b))$ denotes the $1$-dimensional $\k$-vector space equipped with the prevaluation that assigns $\v(b)$ to every nonzero element. The set $\B$ is said to be a \emph{linear adapted basis} if, for every $b \in \B$, $\v(b)$ is the restriction of a linear function in $M$ to $|\Sigma|$. 
\end{definition}

By Definition \ref{def-directsum}, $\B$ is an adapted basis of $(E, \v)$ if and only if $\v(\sum_i C_i b_i) = \min\{\v(b_i) \mid C_i \neq 0 \}$ for any vector $\sum_i C_ib_i \in E$.

\begin{definition}\label{def-pushandpull}
Let $(E, \v)\in \Vect_\Sigma$ and $\pi: E \to D$ be a surjection of vector spaces. The \emph{pushforward} prevaluation $\pi_*(\v): D \to \hat{\O}_\Sigma$ is defined by: $$\pi_*(\v)(g)(\rho) = \max\{ \v(f)(\rho) \mid \pi(f) = g\},\quad \forall \rho \in |\Sigma|,$$where it is understood that $\pi_*(\v)(g)(\rho) = \infty$ if the maximum is never attained. Similarly, given $\phi: F \to E$, we have the \emph{pullback} $\phi^*(\v)$, defined by $\phi^*(\v)(f) = \v(\phi(f))$.  
\end{definition}

The colimit $\colim (E_i, \v_i)$, where $i$ is from an index category $I$, is constructed by taking $\colim \v_i: \colim E_i \to \hat{\O}_\Sigma$ to be the pushforward of the prevaluation on $\bigoplus_{i \in I} (E_i, \v_i)$ under the quotient map $\pi_I: \bigoplus_{i \in I} E_i \to \colim E_i$. In particular, $(\colim \v_i)(f) = \max\{\oplus_{i =1}^\ell \v_i(f_i) \mid \sum_{i =1}^\ell \pi_I(f_i) = f\}$.   

\begin{definition}
The \emph{tensor product} $(E_1, \v_1) \otimes (E_2, \v_2)$ is the pair $(E_1\otimes_\k E_2, \v_1 \star \v_2)$, where $\v_1\star \v_2: E_1 \otimes_\k E_2 \to \hat{\O}_\Sigma$ is the prevaluation with associated subspaces $G^\rho_r(\v_1 \star \v_2)$ defined as follows:
\begin{equation}
G^\rho_r(\v_1 \star \v_2) = \sum_{s + t = r} G^\rho_s(\v_1) \otimes_\k G^\rho_t(\v_2) \subset E_1 \otimes_\k E_2.
\end{equation}
That is, $(\v_1\star \v_2)(f)(\rho) = \max\{r \mid f \in G^\rho_r(\v_1 \star \v_2)\}$, for $\rho \in |\Sigma|$, $f \in E_1 \otimes_\k E_2$.
\end{definition}
\noindent
The prevalued vector space $(\k, 0)$ is the multiplicative identity object in $\Vect_\Sigma$.  Also, for any simple tensor $x\otimes y \in E_1 \otimes E_2$ we have $(\v_1 \star \v_2)(x \otimes y ) = \v_1(x) \odot \v_2(y)$.

 It is straightforward to show that a commutative algebra object $(A, \v) \in \Vect_\Sigma$ is the same information as a commutative $\k$-algebra $A$ equipped with a quasivaluation $\v: A \to \hat{\O}_\Sigma$.   We let $\Alg_\Sigma$ denote the category of commutative algebra objects in $\Vect_\Sigma$.  Tensor product also allows us to make sense of Schur functors in $\Vect_\Sigma$.

\begin{definition}\label{def-schur}
Let $\lambda$ be a partition of $\{1, \ldots, n\}$. The \emph{Schur functor} $S_\lambda: \Vect_\Sigma \to \Vect_\Sigma$ takes a prevalued vector space $(E, \v)$ to $(S_\lambda(E), s_\lambda(\v)) \subset (E, \v)^{\otimes n}$, 
where $s_\lambda(\v)$ is the pullback of $\v^{\star n}$ under the inclusion map $S_\lambda(E) \subset E^{\otimes n}$. 
\end{definition}

The categories we deal with in this paper are \emph{symmetric monoidal} and \emph{Cauchy-complete} (see \cite{nlab:schur}).  It follows that the functor $S_\lambda$ makes sense in any of these categories, and all strictly monoidal functors we consider commute with any $S_\lambda$.    

\subsection{The functors $\L$ and $\mathcal R$}\label{functors} 
In this section, we introduce a functor $\L$ from the category of $T_N$-equivariant sheaves to the category of prevalued vector spaces. This construction is a generalization of Klyachko's construction which assigns a collection of filtrations to a toric vector bundle. Klyachko's construction itself is a generalization of assigning a  piecewise linear function to a toric line bundle.

When the base toric variety is affine, we also define an adjoint functor $\mathcal{R}$ from prevalued vector spaces to $T_N$- equivariant sheaves. That $\L$ and $\mathcal{R}$ are adjoint functors is a main result of this section (Theorem \ref{thm-adjoint}). The proof is postponed to Section \ref{proofs}.  We use this to show that $\L$ gives an equivalence of categories between certain subcategory of $T_N$-equivariant sheaves on a toric variety (not necessarily affine) and a full subcategroy of prevalued vector spaces (Theorem \ref{eversiveglobalequivalence}). 

As usual $Y(\Sigma)$ denotes a toric variety with fan $\Sigma$. We fix a point in the open orbit and identify the open orbit with the torus $T_N$. For $\sigma \in \Sigma$ we denote the corresponding toric affine chart by $Y(\sigma)$.

First we consider the affine case. A cone $\sigma \in \Sigma$ determines an affine semigroup $\sigma^\vee \cap M$, where $\sigma^\vee = \{u \mid \langle \rho, u \rangle \leq 0 \ \ \forall \rho \in \sigma \} \subset \M$ is the dual cone of $\sigma$. We let $S_0 = \k[T_N]$ denote the coordinate ring of $T_N$, and $S_\sigma \subset S_0$ denotes the affine semigroup algebra associated to $\sigma^\vee$. 

\begin{remark}   \label{rem-dual-cone}
Our definition for the dual cone $\sigma^\vee$ is the negative of the convention found in the literature on toric varieties (e.g \cite{CLS} and \cite{GBCP}). This is to conform with the $\min$ convention for valuations and tropical geometry. 
\end{remark}

Let $\Mod_{S_\sigma}^M$ denote the category of $M$-graded $S_\sigma$-modules. Let $R_m \subset R$ denote the $T_N$-isotypical component associated to a character $m \in M$, so that $R = \bigoplus_{m \in M} R_m$. If $\sigma \in \Sigma$ is a cone with affine open toric chart $Y(\sigma) \subset Y(\Sigma)$ and $\F\in \Sh_{Y(\Sigma)}^M$, then $\Gamma(Y(\sigma), \F) \in \Mod_{S_\sigma}^M$.  
The maximal ideal $\m \subset S_\sigma$ defining the identity element in the open orbit $T_N \subset Y(\sigma)$ is the ideal generated by the forms $\chi_u -1$ for $u \in \sigma^\vee \cap M$. For a module $R \in \Mod_{S_\sigma}^M$ let $E_R = R/\m R$, and let $\phi_R: R \to E_R$ be the natural surjection. 

We now explain the construction of a valuation $\v_R$ associated to a module $R \in \Mod_{S_\sigma}^M$.
We define $F_m(R) \subset E_R$ to be the image of $R_m$ under $\phi_R$. For $\rho \in \sigma \cap N$ and $r \in \Q$ let $G^\rho_r(R) \subset E_R$ be the subspace defined by:

\begin{equation}
G^\rho_r(R) = \sum_{\langle \rho, m \rangle \geq r} F_m(R).
\end{equation}
Observe that $G^\rho_r(R) \supseteq G^\rho_s(R)$ whenever $r \leq s$. So for each $\rho \in \sigma \cap N$ we have a decreasing filtration $\{G^\rho_r(R)\}_{r \in \Q}$. 


\begin{definition}[Functor $\mathcal{L}$]  \label{def-functor-L}
For $f \in E_R$, we let $\v_R(f): \sigma \cap N \to \overline{\Q}$ be the prevaluation defined by $$\v_R(f)(\rho) = \max\{r \mid f \in G^\rho_r(R)\}, ~~\forall \rho \in \sigma \cap N.$$ 
We define $\L(R)$ to be the prevalued vector space $(E_R, \v_R) \in \Vect_\sigma$. 
\end{definition}

Next, for any $(E, \v) \in \Vect_\sigma$ and $m \in M$ we can consider the space $F_m(\v) = \{ f \mid \v(f) \geq m \}$.   If $u \in \sigma^\vee \cap M$ then $m \geq m + u$ as functions on $\sigma$, so $F_m(\v) \subset F_{m + u}(\v)$. 

\begin{definition}[Functor $\mathcal{R}$]
For an object $(E, \v)$ of $\Vect_\sigma$ we define the Rees module $\mathcal{R}(E, \v) \in \Mod_{S_\sigma}^M$ to be the following $M$-graded vector space:
\begin{equation}
\mathcal{R}(E, \v) = \bigoplus_{m \in M} F_m(\v).
\end{equation} 
We let $\chi_u \in S_\sigma$ act on $\mathcal{R}(E, \v)$ by the inclusion map: $F_m(\v) \subset F_{m+u}(\v)$. 
\end{definition}

\begin{theorem}\label{thm-adjoint}
The constructions $\L: \Mod_{S_\sigma}^M \to \Vect_\sigma$ and $\mathcal{R}: \Vect_\sigma \to \Mod_{S_\sigma}^M$ define an adjoint pair of functors. 
\end{theorem}

The proof of Theorem \ref{thm-adjoint} can be found in Section \ref{proofs}. As part of the data of the adjunction, we obtain maps $\eta:\mathcal{L}\mathcal{R}(E, \v) \to (E, \v)$ and $\epsilon: R \to \mathcal{R}\mathcal{L}(R)$, these can be described as follows. For $(E, \v) \in \Vect_\sigma$, the space $\mathcal{L}\mathcal{R}(E, \v)$ has underlying vector space $\colim F_m(\v) \subset E$ and prevaluation defined by the spaces $G^\rho_r(\mathcal{R}(E, \v)) = \sum_{\langle \rho, m \rangle \geq r} F_m(\v) \subset G^\rho_r(\v)$.  This gives the map $\eta$.   On the other hand, the object $\mathcal{R}\mathcal{L}(R)$ is the $M$-graded module $\bigoplus_{m \in M} F_m(\v_R)$, where $F_m(\v_R) = \bigcap_{\rho \in \sigma \cap N} G^\rho_{\langle \rho, m \rangle}(R)$. This gives the map $\epsilon$.   

\begin{definition}\label{def-eversive}
We say $(E, \v)$  (respectively $R$) is \emph{eversive} if the map $\eta$ (respectively $\epsilon$) is an isomorphism. We let $\Vect^{ev}_\sigma \subset \Vect_\sigma$ and $\Mod_{S_\sigma}^{M, ev} \subset \Mod_{S_\sigma}^M$ denote the full subcategories on eversive objects. 
\end{definition}

\begin{proposition}\label{eversive}
For any $R \in \Mod_{S_\sigma}^M$ and $(E, \v) \in \Vect_\sigma$, both $\mathcal{R}(E, \v)$ and $\mathcal{L}(R)$ are eversive.  Furthermore, $\epsilon$ and $\eta$ are isomorphisms if and only if the corresponding objects are eversive.  The functors $\mathcal{R}$ and $\mathcal{L}$ define an equivalence of categories $\Vect_\sigma^{ev} \cong \Mod_{S_\sigma}^{M, ev}$. 
\end{proposition}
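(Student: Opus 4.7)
The strategy is to verify the two eversivity assertions directly from the explicit formulas for $\v_R$ and $G^\rho_r(\mathcal{R}(E,\v))$ given just before the statement, then to observe that $\epsilon$ and $\eta$ being isomorphisms is essentially tautological modulo one automatic surjectivity check for $\eta$, and finally to deduce the equivalence by restricting the adjunction of Theorem \ref{thm-adjoint} to the eversive subcategories.

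For eversivity of $\mathcal{L}(R)$, I would prove $G^\rho_r(\v_R) = \sum_{\langle \rho, m\rangle \geq r} F_m(\v_R)$. The inclusion $\supseteq$ is immediate since $F_m(\v_R) = \bigcap_{\rho'} G^{\rho'}_{\langle \rho', m\rangle}(R) \subseteq G^\rho_{\langle \rho, m\rangle}(R) \subseteq G^\rho_r(R)$ when $\langle \rho, m\rangle \geq r$, while the reverse inclusion follows from $\phi_R(F_n(R)) \subseteq F_n(\v_R)$ (elements of this image automatically satisfy $\v_R(\cdot)(\rho') \geq \langle \rho', n\rangle$ for every $\rho'$) combined with the defining identity $G^\rho_r(R) = \sum_{\langle \rho, n\rangle \geq r} \phi_R(F_n(R))$. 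For eversivity of $\mathcal{R}(E,\v)$, the task is to check $F_m(\v) = \bigcap_\rho \sum_{\langle \rho, n\rangle \geq \langle \rho, m\rangle} F_n(\v)$: the forward containment follows by taking $n = m$ in each sum, while the reverse uses subadditivity of $\v$ to conclude from an expression $f = \sum_i g^\rho_i$ with $g^\rho_i \in F_{n^\rho_i}(\v)$ and $\langle \rho, n^\rho_i\rangle \geq \langle \rho, m\rangle$ that $\v(f)(\rho) \geq \langle \rho, m\rangle$ for every $\rho$, hence $f \in F_m(\v)$.

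The biconditionals for $\epsilon$ and $\eta$ are then largely bookkeeping: the map $\epsilon_m: F_m(R) \to F_m(\v_R)$ is literally $\phi_R$ on the $m$-th graded piece, so $\epsilon$ is an isomorphism iff $R$ is eversive by definition; and the prevaluation condition for $\eta: \mathcal{L}\mathcal{R}(E,\v) \to (E,\v)$ is exactly $G^\rho_r(\mathcal{R}(E,\v)) = G^\rho_r(\v)$. The one subtle point is surjectivity of the underlying map $\bigcup_m F_m(\v) \hookrightarrow E$. I would argue this is automatic from the eversive identity: first, $\bigcup_m F_m(\v)$ is genuinely a vector subspace of $E$ because any two $m, n \in M$ admit a common lower bound $k$ in the partial order (using strong convexity of $\sigma^\vee$), giving $F_m(\v) + F_n(\v) \subseteq F_k(\v)$; second, any $f \in E$ has $\v(f)(\rho) \in \mathbb{Z} \cup \{\infty\}$, so $f \in G^\rho_r(\v)$ for some finite $r$, and under the eversive identity this lies in $\sum_{\langle \rho, n\rangle \geq r} F_n(\v) \subseteq \bigcup_m F_m(\v)$.

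Combining these ingredients with Theorem \ref{thm-adjoint}, the functors $\mathcal{L}$ and $\mathcal{R}$ restrict to the eversive subcategories, and the unit and counit of the adjunction become natural isomorphisms there, yielding the desired equivalence $\Vect_\sigma^{ev} \cong \Mod_{S_\sigma}^{M,ev}$. I expect the main obstacle to be exactly this automatic surjectivity step for $\eta$, since the eversive identity as stated constrains only the filtrations $G^\rho_r$ and not the ambient vector space directly, so one must carefully exploit strong convexity of $\sigma^\vee$ to make the descent.
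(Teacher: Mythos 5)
Your argument is correct; note that the paper states Proposition \ref{eversive} without proof, so there is no authorial argument to compare against, but what you have written is exactly the expected unwinding of the definitions: eversivity of $\mathcal{L}(R)$ from $\phi_R(F_n(R))\subseteq F_n(\v_R)$, eversivity of $\mathcal{R}(E,\v)$ from subadditivity of $\v$, and the biconditionals reducing to the definitions once the underlying map of $\eta$ is shown to be surjective. You correctly isolate that surjectivity as the only nontrivial point, and your resolution is right; the one small slip is attributing directedness of the index poset to ``strong convexity of $\sigma^\vee$'' --- what is actually used is that $\sigma^\vee$ is full-dimensional in $\M$, which follows from strong convexity of $\sigma$ itself, so that $(m+\sigma^\vee)\cap(n+\sigma^\vee)\cap M\neq\emptyset$ and $\bigcup_m F_m(\v)$ is a subspace.
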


\begin{example}[Eversive toric ideals]\label{ex-eversiveideals}
We give a description of the $M$-graded ideals which are eversive. Let $I \subset S_\sigma$ be a $M$-graded ideal, and let $\Omega(I) \subset M \cap \sigma^\vee$ be the set of characters in $I$.  For a subset $X \subset M$ we define the $\sigma-$hull $|X|_\sigma$ to be the set of $m$ with the property that $\langle \rho , m \rangle \leq r$ whenever $\langle \rho, v \rangle \leq r$ for all $v \in X$. We claim that $I$ is eversive if and only if $\Omega(I) = |\Omega(I)|_\sigma$. 

First, $E_I \cong \k$, and for any $\rho \in \sigma$, $G^\rho_r(I) \subset E_I$ is non-zero if and only if there is some $u \in \Omega(I)$ with $\langle \rho, u \rangle \geq r$. Now, for any $m \in M$, the $m$-isotypical space of $\mathcal{R}\L(I)$ is $F_m(\v_I) = \bigcap_{\langle \rho, m \rangle = r} G^\rho_r(I)$. The latter is nonzero if and only if for all $\rho \in \sigma$, $r\in \Q$ such that $\langle \rho, m \rangle =r$ there exists a $u \in \Omega(I)$ such that $\langle \rho, u \rangle \geq r$.  As a consequence, $F_m(\v_I)$ is non-zero if and only if for all $\rho \in \sigma$, $\langle \rho, m \rangle$ is less than the largest value $\rho$ obtains on $\Omega(I)$. This second condition is equivalent to $m \in |\Omega(I)|_\sigma$. 
\end{example}

\begin{example}[Reflexive sheaves vs eversive sheaves]\label{ex-reflexive}
Recall that a sheaf $\F$ is said to be \emph{reflexive} if it is isomorphic to its own double-dual: $\F \cong (\F^\vee)^\vee$.  In \cite[Section 5.5]{Perling}, Perling characterizes $T-$linearized reflexive sheaves as those sheaves which are determined by the filtrations defined by the ray generators.  In particular, over an affine toric variety $Y(\sigma)$, $R = \bigoplus_{m \in M} F_m$ is reflexive if and only if $F_m = \bigcap_{\rho_i \in \sigma(1)} G^{\rho_i}_{\langle \rho_i, m\rangle}$. Observe that we always have the inclusions $F_m \subset F_m(\v_R) \subset \bigcap_{\rho_i \in \sigma(1)} G^{\rho_i}_{\langle \rho_i, m\rangle}$, hence if $R$ is reflexive these inclusions must be equalities, and $R$ must also be eversive by Definition \ref{def-eversive}.  

On the other hand, Example \ref{ex-eversiveideals} provides a way to find eversive sheaves which are not reflexive.  We let $I = \langle xy^4, x^2y^2, x^4y\rangle \subset \k[x, y]$; this ideal defines an eversive sheaf by Example \ref{ex-eversiveideals}.  However, in this case $F_{(1, 1)} = 0 \neq G^{(-1, 0)}_{-1} \cap G^{(0, -1)}_{-1} = \k$, so $I$ cannot be reflexive.  
\end{example}

We let $\Alg_{S_\sigma}^M$ be the category of commutative algebras in $\Mod_{S_\sigma}^M$.  If $R \in \Alg_{S_\sigma}^M$ then $R/\m R \cong E_R$ is a $\k$-algebra, and $\v_R$ is a quasivaluation on $E_R$.  Similarly, $\mathcal{R}(A, \v)$ is an $S_\sigma$-algebra if $(A, \v) \in \Alg_\sigma$.   Let $\Alg_\sigma^{ev}$ and $\Alg_{S_\sigma}^{M, ev}$ be the categories of algebras whose underlying objects are eversive, then it is straightforward to show that $\Alg_\sigma^{ev} \cong \Alg_{S_\sigma}^{M, ev}$.

Finally, we construct the functor $\L$ for $T_N$-equivariant quasicoherent sheaves $\Sh_{Y(\Sigma)}^M$. 
\begin{definition}\label{eversivesheaf}
Let $\Sh^{M, ev}_{Y(\Sigma)}$ be the full subcategory of $\Sh^M_{Y(\Sigma)}$ consisting of sheaves $\F$ such that $\Gamma(Y(\sigma), \F) \in \Mod_{S_\sigma}^{M, ev}$ for each $\sigma \in \Sigma$. 
\end{definition}

Recall that a functor $F: (\mathcal{C}, \otimes)\to (\mathcal{D}, \boxtimes)$ between symmetric monoidal categories is said to be \emph{strictly monoidal} if $F(A \otimes B)$ is naturally isomorphic to $F(A) \boxtimes F(B)$.

\begin{theorem}\label{eversiveglobalequivalence}
There is a monoidal functor $\mathcal{L}: \Sh^M_{Y(\Sigma)} \to \Vect_\Sigma$.  This functor gives an equivalence of $\Sh^{M, ev}_{Y(\Sigma)}$ and a full subcategory of $\Vect_\Sigma$.  
\end{theorem}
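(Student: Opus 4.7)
The plan is to globalize the local functors $\mathcal{L}_\sigma$ and $\mathcal{R}_\sigma$ from Subsection \ref{functors} using the compatibility with face maps established in Proposition \ref{localization}. Given $\F \in \Sh^M_{Y(\Sigma)}$ and a face $\sigma \in \Sigma$, let $R_\sigma = \Gamma(Y(\sigma), \F) \in \Mod_{S_\sigma}^M$ and apply $\mathcal{L}_\sigma$ to obtain $(E_\sigma, \v_\sigma) \in \Vect_\sigma$. For $\tau \subset \sigma$, the restriction map is localization $R_\sigma \otimes_{S_\sigma} S_\tau \cong R_\tau$, and Proposition \ref{localization} supplies a canonical isomorphism $\iota^\dagger \mathcal{L}_\sigma(R_\sigma) \cong \mathcal{L}_\tau(R_\tau)$. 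Applied to the face $\{0\}$ this identifies every $E_\sigma$ with a single vector space $E$ (the fiber over the generic torus orbit), and the prevaluations $\v_\sigma: E \to \hat{\O}_\sigma$ agree on overlaps, so they assemble into a single $\v: E \to \hat{\O}_\Sigma$ with $\v\!\mid_\sigma = \v_\sigma$. I would set $\mathcal{L}(\F) := (E, \v)$, with functoriality inherited from that of each $\mathcal{L}_\sigma$ together with the naturality asserted in Proposition \ref{localization}.

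Monoidality of $\mathcal{L}$ is then inherited from the strict monoidality of each $\mathcal{L}_\sigma$ proved in the previous subsection: the tensor product of two $T_N$-equivariant quasicoherent sheaves, restricted to $Y(\sigma)$, is the tensor product of the corresponding $M$-graded $S_\sigma$-modules, and $\mathcal{L}_\sigma$ carries this to the tensor product in $\Vect_\sigma$. Because the gluing identifications are natural, the local isomorphisms $\mathcal{L}_\sigma(R_\sigma \otimes_{S_\sigma} R'_\sigma) \cong \mathcal{L}_\sigma(R_\sigma) \otimes \mathcal{L}_\sigma(R'_\sigma)$ are compatible across faces and descend to a global isomorphism $\mathcal{L}(\F \otimes \mathcal{G}) \cong \mathcal{L}(\F) \otimes \mathcal{L}(\mathcal{G})$.

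For the equivalence claim, I would construct a right adjoint $\mathcal{R}: \Vect_\Sigma \to \Sh^M_{Y(\Sigma)}$ on the relevant subcategory by gluing the local Rees modules. For $(E, \v) \in \Vect_\Sigma$, set $R_\sigma = \mathcal{R}_\sigma(E, \v\!\mid_\sigma)$, and attempt to realize these as the sections of a $T_N$-equivariant quasicoherent sheaf via $\Gamma(Y(\sigma), \mathcal{R}(E, \v)) = R_\sigma$, with restriction along $\tau \subset \sigma$ given by the natural extension of scalars. One then invokes the local adjunction (Theorem \ref{thm-adjoint}) and Proposition \ref{eversive} on each chart to conclude that the unit $\eta$ and counit $\epsilon$ are isomorphisms on eversive objects, so that $\mathcal{R}\mathcal{L} \cong \mathrm{id}$ on $\Sh^{M,ev}_{Y(\Sigma)}$ and $\mathcal{L}\mathcal{R} \cong \mathrm{id}$ on its essential image.

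The main obstacle is the gluing step for $\mathcal{R}$: verifying the compatibility $\mathcal{R}_\sigma(E, \v\!\mid_\sigma) \otimes_{S_\sigma} S_\tau \cong \mathcal{R}_\tau(E, \v\!\mid_\tau)$ for every face $\tau \subset \sigma$. This does not hold for arbitrary $(E, \v) \in \Vect_\Sigma$, because the filtration pieces $F_m(\v\!\mid_\sigma)$ are cut out by inequalities indexed over all rays of $\sigma$, so inverting $\chi_u$ for $u$ vanishing on $\tau$ but not on $\sigma$ need not recover $F_m(\v\!\mid_\tau)$. However, when $(E, \v)$ lies in the essential image of $\mathcal{L}$ restricted to $\Sh^{M,ev}$, Proposition \ref{eversive} combined with the eversivity of $\Gamma(Y(\sigma), \F)$ forces $\mathcal{R}_\sigma(E, \v\!\mid_\sigma) \cong \Gamma(Y(\sigma), \F)$, and the sheaf restriction maps of $\F$ supply exactly the required compatibilities. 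Hence the gluing succeeds on this subcategory, and $\mathcal{L}$ and $\mathcal{R}$ restrict to inverse equivalences there, proving the theorem.
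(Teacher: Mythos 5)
Your proposal is correct and follows essentially the same route as the paper: globalize $\mathcal{L}_\sigma$ via the compatibility in Proposition \ref{localization}, deduce monoidality from the local strict monoidality on affine charts, and obtain the equivalence on $\Sh^{M,ev}_{Y(\Sigma)}$ by applying the local Rees functor chart-by-chart and gluing by descent, with the eversive hypothesis supplying exactly the identification $\mathcal{R}_\sigma\mathcal{L}_\sigma(\Gamma(Y(\sigma),\F)) \cong \Gamma(Y(\sigma),\F)$ needed for the gluing to succeed. Your explicit flagging of why the gluing of $\mathcal{R}$ fails for arbitrary objects of $\Vect_\Sigma$ but holds on the image of eversive sheaves is precisely the point the paper's proof relies on.
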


The functor $\L$ allows us to describe morphisms between equivariant sheaves using prevaluations.  In the next corollary we give such a description of the space of maps when the domain is the invertible sheaf corresponding to a $T_N$-Cartier divisor. We say that a function $\psi: \Sigma \cap N \to \Z$ is {\it piecewise linear  on the fan $\Sigma$} if the restriction $\psi\!\mid_\sigma$ is linear for each cone $\sigma \in \Sigma$. It is well-known (see \cite[Chapter 3]{Fulton}) that piecewise linear functions on $\Sigma$ correspond to $T_N$-Cartier divisors on $Y(\Sigma)$.  For such a $\psi$, we let $\O(\psi)$ denote the associated invertible sheaf on $Y(\Sigma)$.  In particular, for $m \in M$, $\O(m)$ denotes the linearization of the structure sheaf of $Y(\Sigma)$ corresponding to the character $\chi_m$.  Also, recall that for any $(E,\v) \in \Vect_\Sigma$, $F_\psi(\v) \subset E$ denotes the space of those $f \in E$ such that $\v(f) \geq \psi$.  

\begin{corollary}\label{cor-invertiblehom}
Let $\psi$ be a piecewise linear function on $\Sigma$, and let $\F \in \Sh_{Y(\Sigma)}^{M, ev}$ with $\mathcal{L}(\F) = (E, \v)$. Then $\Hom_{\Sh_{Y(\Sigma)}^M}(\O(\psi), \F)$ is isomorphic to $F_\psi(\v)$. 
\end{corollary}

\subsection{Connection to Perling's $\Sigma$-families}
In \cite{Perling}, Perling proves that the category $\Sh_{Y(\Sigma)}^M$ is equivalent to the category of \emph{$\Sigma$-families}.  In the affine case, the $\sigma$-family of a module $R \in \Mod_{S_\sigma}^M$ is the directed system formed by the isotypical spaces $R_m$ for $m \in M$ and the linear maps $\chi_u: R_m \to R_{m + u}$ defined by the characters in $S_\sigma$.  

In the case that $R$ is eversive, the functor $\L: \Mod_{S_\sigma}^M \to \Vect_\sigma$ recovers the data of the $\sigma$-family as the spaces $F_m(\v_R) \subset E_R$, along with the inclusions $F_m(\v_R) \subset F_{m + u}(\v_R)$.  However, $\L$ can forget some of the data of the $\sigma$-family when $R$ is not eversive. For example, if $R$ is torsion, $E_R$ and all spaces $F_m(\v_R)$ are $0$.  Example \ref{ex-eversiveideals} also gives a way to produce $M$-graded ideals $I \subset S_\sigma$ where $\L(I)$ forgets some of the data of the associated $\sigma$-family.

\subsection{Free, projective, and flat modules}\label{freesection}
In this section we see how the functor $\L$ behaves on free, projective and flat modules.
It is critical for the proof of our classification theorem (Theorem \ref{thm-main-family}). In particular, we show that projective modules are eversive.

We start by observing that a direct sum is eversive if and only if the components are.
We leave the proof to the reader. 
\begin{proposition}\label{eversivesum}
In $\Mod_{S_\sigma}^M$ and $\Vect_\sigma$, a direct sum is eversive if and only if its components are eversive. 
\end{proposition}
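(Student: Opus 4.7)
The plan is to derive both statements from the categorical structure already established in Section \ref{functors}, using Theorem \ref{thm-adjoint} and Proposition \ref{eversive}. First I would observe that direct sums in $\Vect_\sigma$ and in $\Mod_{S_\sigma}^M$ are biproducts (both products and coproducts), and since $\mathcal{L} \dashv \mathcal{R}$ is an adjoint pair by Theorem \ref{thm-adjoint}, the left adjoint $\mathcal{L}$ preserves coproducts and the right adjoint $\mathcal{R}$ preserves products; this already forces both functors to respect direct sums. For completeness, and to prepare for the second statement, I would unpack this concretely. On the $\Vect_\sigma$ side, the definition $(\v \oplus \w)(f+g) = \v(f) \oplus \w(g)$ immediately yields $F_m(\v \oplus \w) = F_m(\v) \oplus F_m(\w)$, since $\min\{\v(f)(\rho'), \w(g)(\rho')\} \geq \langle \rho', m\rangle$ for all $\rho'$ iff both inequalities hold separately; summing over $m$ gives $\mathcal{R}((E,\v) \oplus (D,\w)) \cong \mathcal{R}(E,\v) \oplus \mathcal{R}(D,\w)$ as $M$-graded $S_\sigma$-modules. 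Dually, for $R, R' \in \Mod_{S_\sigma}^M$ one has $F_m(R \oplus R') = F_m(R) \oplus F_m(R')$, colimits commute with direct sums (so $E_{R \oplus R'} \cong E_R \oplus E_{R'}$), and the subspaces $G^\rho_r(R \oplus R')$ decompose as $G^\rho_r(R) \oplus G^\rho_r(R')$, giving $\v_{R \oplus R'} = \v_R \oplus \v_{R'}$ and hence $\mathcal{L}(R \oplus R') \cong \mathcal{L}(R) \oplus \mathcal{L}(R')$.

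For the eversive claim, I would appeal to Proposition \ref{eversive}: $(E, \v)$ is eversive iff the counit $\eta_{(E,\v)}: \mathcal{L}\mathcal{R}(E,\v) \to (E,\v)$ is an isomorphism, and $R$ is eversive iff the unit $\epsilon_R: R \to \mathcal{R}\mathcal{L}(R)$ is an isomorphism. Since $\mathcal{L}$ and $\mathcal{R}$ both respect direct sums, so do the compositions $\mathcal{L}\mathcal{R}$ and $\mathcal{R}\mathcal{L}$. Naturality of $\eta$ applied to the coordinate inclusions and projections of $(E,\v) \oplus (D,\w)$ then forces $\eta_{(E,\v) \oplus (D,\w)} = \eta_{(E,\v)} \oplus \eta_{(D,\w)}$ under the canonical identifications from the first paragraph, and analogously for $\epsilon$. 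Since a direct sum of linear maps is an isomorphism if and only if each summand is, the direct sum is eversive precisely when each of its components is.

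The only potential obstacle is confirming that the isomorphisms constructed in the first paragraph are genuinely natural in each argument, so that the naturality argument in the second paragraph is valid; this reduces to the observation that the direct sum inclusions and projections are morphisms in $\Vect_\sigma$ and $\Mod_{S_\sigma}^M$ respectively, which is immediate from the definitions of $\v \oplus \w$ and of the graded module direct sum.
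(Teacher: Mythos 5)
Your proof is correct; the paper explicitly leaves this proposition to the reader, and your argument --- the concrete identifications $F_m(\v \oplus \w) = F_m(\v) \oplus F_m(\w)$ and $G^\rho_r(R \oplus R') = G^\rho_r(R) \oplus G^\rho_r(R')$, combined with the unit/counit characterization of eversivity from Proposition \ref{eversive} and naturality applied to the coordinate inclusions --- is exactly the intended one. The only caveat is that your opening categorical remark (a right adjoint preserves products) by itself covers only \emph{finite} direct sums of prevalued spaces, since an infinite direct sum is a coproduct but not a product; your explicit computation of $F_m(\v \oplus \w)$ handles arbitrary direct sums, so nothing is missing.
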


Let $P$ be a free $M$-graded $S_\sigma$-module, then we can write $P = \bigoplus_{b_i \in \B} S_\sigma \, b_i$ for $\B$ an $M$-homogeneous basis. We let $\deg(b_i) = \lambda_i \in M$.  For any $m \in M$ we have a direct sum decomposition $P_m = \bigoplus_{u_i + \lambda_i = m} \k\,\chi_{u_i}\, b_i$.  

\begin{proposition}\label{free}
The functors $\mathcal{L}$ and $\mathcal{R}$ give an equivalence of categories between $M$-homogeneous free modules and prevalued vector spaces with linear adapted bases (Definition \ref{linearadaptedbasis}). Furthermore, if $P = \bigoplus_{b_i \in \B} S_\sigma
, b_i$ is free with $\deg(b_i) = \lambda_i$, then $P$ is eversive, $\mathcal{L}(P) \cong \bigoplus_{b_i \in \B} (\k, \lambda_i)$, and $F_m(P)$ is isomorphic to the subspace of $E_P$ with basis $\B_m = \{b_i \mid m -\lambda_i \in \sigma^\vee \cap M\}$.
\end{proposition}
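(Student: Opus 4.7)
The plan is to reduce the entire statement to the case of a single cyclic free module $S_\sigma[b_i]$ via Proposition \ref{eversivesum}, compute both $\mathcal{L}$ and $\mathcal{R}$ on this building block by direct unpacking, and then invoke the adjunction of Theorem \ref{thm-adjoint} to promote the componentwise computations to an equivalence of categories.

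First I would compute $\mathcal{L}(S_\sigma[b_i])$. The graded piece $F_m(S_\sigma[b_i])$ is $\k \chi_{m-\lambda_i}[b_i]$ if $m - \lambda_i \in \sigma^\vee \cap M$ and $0$ otherwise, and the structure maps $\chi_u \colon F_m \to F_{m+u}$ are all isomorphisms on their (nonzero) images. Hence the colimit $E_{S_\sigma[b_i]}$ is one-dimensional, identified with $\k \cdot b_i$ via $\phi_{S_\sigma[b_i]}$. For the prevaluation, note that $\phi_{S_\sigma[b_i]}(F_m(S_\sigma[b_i])) = \k b_i$ precisely when $m - \lambda_i \in \sigma^\vee$, so for $\rho \in \sigma \cap N$,
\begin{equation*}
\v_{S_\sigma[b_i]}(b_i)(\rho) \;=\; \max\{ \langle \rho, m \rangle : m - \lambda_i \in \sigma^\vee \cap M\} \;=\; \langle \rho, \lambda_i\rangle,
\end{equation*}
where the maximum is attained at $m = \lambda_i$ because $\langle \rho, u\rangle \leq 0$ for $u \in \sigma^\vee$ and $\rho \in \sigma$. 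Thus $\mathcal{L}(S_\sigma[b_i]) \cong (\k, \lambda_i)$, and applying Proposition \ref{eversivesum} to the direct sum decomposition of $P$ yields $\mathcal{L}(P) \cong \bigoplus_{b_i \in \B}(\k, \lambda_i)$ with linear adapted basis $\B$.

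Next I would verify eversiveness of $P$ and identify $F_m(P)$ with the claimed span. On one side, the free decomposition gives
\begin{equation*}
F_m(P) \;=\; \bigoplus_{i :\, m - \lambda_i \in \sigma^\vee \cap M} \k \chi_{m-\lambda_i}[b_i],
\end{equation*}
which $\phi_P$ maps isomorphically onto the $\k$-span of $\B_m = \{b_i : m - \lambda_i \in \sigma^\vee \cap M\}$ inside $E_P$. On the other side, $F_m(\v_P) = \{f \in E_P \mid \v_P(f)(\rho) \geq \langle \rho, m\rangle \ \forall \rho \in \sigma \cap N\}$; for $f = \sum c_i b_i$, adaptedness gives $\v_P(f)(\rho) = \min_{c_i \neq 0} \langle \rho, \lambda_i\rangle$, so the condition becomes $\langle \rho, \lambda_i - m\rangle \geq 0$ on $\sigma$ for each $i$ with $c_i \neq 0$, equivalently $m - \lambda_i \in \sigma^\vee$. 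Hence $F_m(\v_P)$ has the same basis $\B_m$, and $\phi_P$ identifies $F_m(P) \cong F_m(\v_P)$, proving eversiveness.

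For the equivalence, I would go in the opposite direction: given $(E, \v) = \bigoplus_{b_i \in \B}(\k, \lambda_i)$ a prevalued space with linear adapted basis, compute
\begin{equation*}
\mathcal{R}(E, \v) \;=\; \bigoplus_{m \in M} F_m(\v) \;=\; \bigoplus_{m \in M} \mathrm{span}\{b_i : m - \lambda_i \in \sigma^\vee \cap M\},
\end{equation*}
where the second equality uses the same computation as in the previous paragraph. This is visibly $\bigoplus_{b_i \in \B} S_\sigma[b_i]$ as an $M$-graded $S_\sigma$-module. Together with the preceding paragraph and eversiveness of both $P$ and $\mathcal{L}(P)$, Proposition \ref{eversive} (unit and counit are isomorphisms on eversive objects) promotes the componentwise isomorphisms to a natural equivalence between the full subcategory of $M$-homogeneous free modules and the full subcategory of prevalued spaces with linear adapted basis.

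The only delicate point is keeping the sign convention of Remark \ref{rem-dual-cone} straight: one must consistently remember that $u \in \sigma^\vee$ means $\langle \rho, u\rangle \leq 0$ on $\sigma$, so that the colimit-maximum in the definition of $\v_P$ is attained at $m = \lambda_i$ rather than at infinity. Once this is done, every step is a direct unpacking of definitions, and no deeper machinery beyond Propositions \ref{eversive}, \ref{eversivesum} and the adjunction of Theorem \ref{thm-adjoint} is needed.
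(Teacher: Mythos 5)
Your proof is correct and follows essentially the same route as the paper's: reduce to the cyclic module $S_\sigma[b_i]$ via Proposition \ref{eversivesum}, observe that $\mathcal{L}(S_\sigma[b_i]) \cong (\k,\lambda_i)$ and that the counit is an isomorphism there, and let the eversive equivalence do the rest. The paper states this in one line; you have simply written out the computations (including the sign-convention check for $\sigma^\vee$), all of which are accurate.
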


\begin{proof}
By Proposition \ref{eversivesum} and Theorem \ref{eversiveglobalequivalence} it suffices to note that if $\deg(b) = \lambda$ then $\mathcal{L}(S_\sigma\,b) = (\k, \lambda)$ and $\epsilon: \mathcal{R}\mathcal{L}(S_\sigma\, b) \cong S_{\sigma}\, b$.
\end{proof}

Propositions \ref{free} and \ref{eversivesum} imply that projective modules in $\Mod_{S_\sigma}^M$ are eversive and correspond under $\mathcal{L}$ to summands of spaces with a linear adapted basis.   From now on we identify the category of $T_N$-vector bundles on $Y(\Sigma)$ with the full subcategory of locally-free $T_N$-sheaves. 

\begin{corollary}\label{finiteequivalence}
Let $(E, \v) \in \Vect_N$ with $E$ a finite dimensional vector space. The following are equivalent.

\begin{enumerate}
\item The image $\v(E)$ is a finite subset of $\O_N$.
\item There is a finite complete fan $\Sigma$ with $(E, \v) = \mathcal{L}(\E)$ for a locally free $\E \in \Sh_{Y(\Sigma)}^M$.
\item There is a finite complete fan $\Sigma$ with $(E, \v) = \mathcal{L}(\F)$ for a coherent $\F \in \Sh_{Y(\Sigma)}^M$.
\end{enumerate} 
\end{corollary}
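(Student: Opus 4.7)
The plan is to prove the chain $(2) \Rightarrow (3) \Rightarrow (1) \Rightarrow (2)$. The first implication is immediate since every toric vector bundle is coherent as a $T_N$-equivariant sheaf. For $(3) \Rightarrow (1)$, I apply Example \ref{coherent} cone by cone: on each $\sigma$ in a finite complete fan $\Sigma$ the finitely generated module $\Gamma(Y(\sigma), \F)$ has $\v_\F\!\mid_\sigma$ with image a finite subset of $\O_\sigma \setminus \{\infty\}$, and finiteness of $\Sigma$ then collects these into a finite image in $\O_N \setminus \{\infty\}$ for $\v = \v_\F$.

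The substantive content is $(1) \Rightarrow (2)$. My strategy is to build a finite complete rational fan $\Sigma$ on which $(E, \v\!\mid_\sigma)$ has a linear adapted basis for every $\sigma$, since by the preceding corollary of Section \ref{freesection} this identifies $(E, \v)$ as $\mathcal{L}(\E)$ for a toric vector bundle on $Y(\Sigma)$. The fan $\Sigma$ is produced in two refinement steps from the finite image $\v(E\setminus\{0\}) = \{\phi_1, \ldots, \phi_k\}$: first take the common refinement $\Sigma_0$ of the domains of linearity of the $\phi_i$'s, so that each $\phi_i\!\mid_\sigma$ lies in $M$; then subdivide each $\sigma \in \Sigma_0$ along the rational linear hyperplanes $\{\phi_i\!\mid_\sigma = \phi_j\!\mid_\sigma\}$. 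These hyperplanes match across any shared face $\tau \subset \sigma_1 \cap \sigma_2$ because each $\phi_i$ is a single globally defined PL function (so $\phi_i\!\mid_{\sigma_1}$ and $\phi_i\!\mid_{\sigma_2}$ agree on $\tau$), and the local subdivisions therefore glue into a well-defined finite complete rational fan $\Sigma$. By construction, on each $\sigma \in \Sigma$ the restricted values $\{\phi_1\!\mid_\sigma, \ldots, \phi_k\!\mid_\sigma\} \subset M$ are totally ordered by the pointwise order $\succeq$.

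The key local claim is that on such a cone $\sigma$, any basis adapted to the induced flag is automatically a linear adapted basis. Enumerate the values on $\sigma$ as $\phi_{\pi(1)} \succeq \cdots \succeq \phi_{\pi(k)}$, form the flag $E_j = \{f : \v(f)\!\mid_\sigma \succeq \phi_{\pi(j)}\}$ with $E_0 = 0$ and $E_k = E$, and pick a basis $\mathcal{B}_\sigma$ adapted to $0 \subseteq E_1 \subseteq \cdots \subseteq E_k$. For $b \in \mathcal{B}_\sigma \cap (E_j \setminus E_{j-1})$, combining $\v(b) \in \{\phi_i\}$ with the total ordering forces $\v(b)\!\mid_\sigma = \phi_{\pi(j)} \in M$. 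Given $f = \sum_{b \in S} c_b b$ with all $c_b \neq 0$, let $j^*$ be maximal such that $S$ meets $E_{j^*}\setminus E_{j^*-1}$, and split $f = g + f^*$ where $g \in E_{j^*-1}$ collects the basis contributions from lower strata and $f^*$ is the contribution from the $j^*$-th stratum. Projecting to $E_{j^*}/E_{j^*-1}$, basis-adaptedness gives a nonzero image $\bar{f^*}$, so $f \notin E_{j^*-1}$, which combined with $f \in E_{j^*}$ and the total ordering forces $\v(f)\!\mid_\sigma = \phi_{\pi(j^*)} = \min_{b \in S} \v(b)\!\mid_\sigma$ pointwise on $\sigma$. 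This yields the isomorphism $(E, \v\!\mid_\sigma) \cong \bigoplus_{b \in \mathcal{B}_\sigma}(\k, \v(b)\!\mid_\sigma)$ in $\Vect_\sigma$.

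The main obstacle is the two-step refinement and the verification that it produces an honest fan on which the finite set of values becomes totally ordered on each cone; once in place, the flag-adaptedness argument is routine, and the local linear adapted bases are assembled into the global toric vector bundle $\E$ via Theorem \ref{eversiveglobalequivalence}, completing $(1) \Rightarrow (2)$.
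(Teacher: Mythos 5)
Your proposal is correct and follows essentially the same route as the paper: the implications $(2)\Rightarrow(3)$ and $(3)\Rightarrow(1)$ are handled exactly as in the text (the latter via Example \ref{coherent}), and for $(1)\Rightarrow(2)$ the paper likewise chooses a fan on which all values become linear, observes that the values are then totally ordered on each cone, and lifts a basis of the associated graded space to get a linear adapted basis. The only difference is cosmetic: where the paper asserts the total ordering "by construction," you enforce it explicitly by the extra subdivision along the loci $\{\phi_i = \phi_j\}$, which is a harmless (and slightly more self-contained) way of arranging the same thing.
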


\begin{proof}
 Given $\v: E \to \O_N$ with $\v(E) \subset \O_N$ finite we find any fan $\Sigma$ such that $\v(f)\!\!\mid_\sigma \in M_\sigma$ for all $\sigma \in \Sigma$ and all $f \in E$. The image $\v\!\!\mid_\sigma(E \setminus \{0\}) \subset \O_\sigma$ is then a finite set $\lambda_1, \ldots, \lambda_k \in M_\sigma \subset \O_\sigma$. By construction, the partial order on $\O_\sigma$ must restrict to a total ordering on the $\lambda_i$.   We let $\gr_\sigma(E)$ be the associated graded vector space, and we choose a basis $\hat{\B}_\sigma \subset \gr_\sigma(E)$. Any lift $\B_\sigma \subset E$ of this basis is adapted to $\v\!\!\mid_\sigma$, this proves $(1) \to (2)$.  The case $(2) \to (3)$ is immediate.   For $(3) \to (1)$, consider an $S_\sigma$-module $R$ generated by $f_1, \ldots, f_N \in R$ with $\deg(f_i) = \lambda_i$. If $p = \sum_{i =1}^\ell \chi_{u_i}f_i$, then $\phi_R(p) = q \in \sum F_{\lambda_i}(R)$, so $\v_R(q) \geq \bigoplus_{i =1}^\ell \lambda_i$. Furthermore, $R_m \neq 0$ only if $m \in \bigcup_{i =1}^N (\lambda_i + (\sigma^\vee \cap M))$. It follows that $\v_R(q)$ is the maximum over a finite number of expressions of the form $\bigoplus_{i =1}^\ell \lambda_i$, and only a finite number of such expressions are possible. 
\end{proof}

\begin{example}\label{tensorbasis}
If $(E, \v)$ and $(F, \w)$ are equipped with linear adapted bases $\B_1 \subset E$, $\B_2 \subset F$, one checks that $\B_1 \times \B_2 \cong \{b_1 \otimes b_2 \mid b_i \in \B_2\} \subset E \otimes_\k F$ gives a linear adapted basis as follows.  For any $\rho \in \Sigma \cap N$, $G^\rho_s(\v) \cap \B_1 \subset G^\rho_s(\v)$ and $G^\rho_t(\w) \cap \B_2 \subset G^\rho_t(\w)$ are bases, so $G^\rho_{s + t}(\v \star \w) \cap (\B_1 \times \B_2)$ is a basis of $G^\rho_{s + t}(\v \star \w)$.  This shows that $\B_1 \times \B_2$ is adapted to $\v \star \w$.  Moreover, $(\v\star \w)(b_1 \otimes b_2) = \v(b_1) + \w(b_2)$ for any $b_1 \times b_2 \in \B_1 \times \B_2$, so $\B_1 \times \B_2$ is linear. 
\end{example}

\begin{example}\label{schurbasis}
Example \ref{tensorbasis} allows us to find adapted bases of $S_\lambda(E, \v)$ for any Schur functor $S_\lambda$ and a linear adapted basis $\B$ for $(E, \v)$. For any partition $\lambda$ we obtain a basis $\B_\lambda$ of $S_\lambda(E)$ by applying the symmetrizers $s_\tau$, corresponding to semi-standard fillings $\tau$ of $\lambda$, to $\B$.  We let $b_\tau \in \B_\lambda \subset S_\lambda(E)$ be the tensor obtained by applying $s_\tau$ to the basis $\B$. It is straightforward to check that if $\v$ is adapted to $\B \subset E$, then every simple tensor appearing in $b_\tau$ has the same value. The simple tensors with entries in $\B$ are an adapted basis of $E^{\otimes |\lambda|}$, so we conclude that $s_\lambda(\v)(b_\tau)$ is linear if $\v(b)$ is linear for all $b \in \B$.   For any $\rho \in \sigma \cap N$, we have $s_\lambda(\v)_\rho(\sum C_\tau b_\tau) \geq \bigoplus s_\lambda(\v)_\rho(b_\tau)$.  Let $\sum_{\min} C_\tau b_\tau$ denote the sum of terms in $\sum C_\tau b_\tau$
which are minimum with respect to $s_\lambda(\v)_\rho$. If $\sum_{\min} C_\tau b_\tau$ is non-zero, then some simple tensor in the elements of $\B$ in this sum achieves the top value, so this is an equality. If  $\sum_{\min} C_\tau b_\tau = 0$, then $C_\tau = 0$ for all $\tau$, as the $\B_\lambda = \{b_\tau\}$ is a basis.  It follows that $\B_\lambda$ is a linear adapted basis of $S_\lambda(E, \v)$.   
\end{example}

\begin{example}\label{dual}
For a $T_N$-vector bundle $\E$, the dual $\E^*$ corresponds to the locally free sheaf with $\Gamma(\E^*, Y(\sigma)) = \bigoplus_{i =1}^r S_\sigma \, b_i^*$, where $\deg(b_i^*) = -\deg(b_i) = -\lambda_i$.  We have $\mathcal{L}(\E^*) = (E^*, \v^*)$, where locally $(E^*, \v^*\!\!\mid_\sigma) = \bigoplus_{i = 1}^r (\k, -\lambda_i)$. 
\end{example}

For a module $R \in \Mod_{S_\sigma}^M$ there are two $\k$-vector spaces associated to a point $\rho \in \sigma \cap N$.  First, the fiber over a point in the orbit corresponding to the face $\tau$ containing $\rho$ in its relative interior by taking the quotient $R/\m_\tau R$, where $\m_\tau = \langle\{ \chi_u -1, \chi_v \mid u \in \tau^{\perp}, \langle \rho, v \rangle < 0 \ \forall \rho \in \tau\}\rangle$.   Alternatively, we have the associated graded space $\gr_\rho(E_R) = \bigoplus_{r \in \Q} G^\rho_r(R)/G^\rho_{> r}(R)$.  These two spaces are naturally isomorphic if $R$ is flat.  In this way we connect the geometric notion of fiber with the algebraic notion of associated graded. 

\begin{proposition}\label{fiber}
The map $\hat{\phi}_\rho: R \to \gr_\rho(E_R)$ sending $R_m$ to $G^\rho_{\langle \rho, m \rangle}(R)/G^\rho_{> \langle \rho, m \rangle}(R)$ is surjective and factors through the surjection $R \to R/\m_\tau R$, giving a map $\phi_\rho: R/\m_\tau R \to gr_\rho(E_R)$.  If $R$ is flat, then $\phi_\rho$ is also injective. If $R$ is an algebra, $\phi_\rho$ is a map of algebras.  
\end{proposition}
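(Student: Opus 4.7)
My plan is to split the proof into four steps: well-definedness and surjectivity of $\hat{\phi}_\rho$; the factorization through $R/\m_\tau R$; injectivity when $R$ is flat; and the algebra structure.

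First, since $\phi_R(F_m(R)) \subseteq G^\rho_{\langle\rho,m\rangle}(R)$ holds by the definition of $G^\rho_r(R)$, the assignment sending $g \in F_m(R)$ to the class of $\phi_R(g)$ in $G^\rho_{\langle\rho,m\rangle}(R)/G^\rho_{>\langle\rho,m\rangle}(R)$ is well-defined and extends linearly to $R$ (I read the denominator in the statement as $G^\rho_{>\langle\rho,m\rangle}$, consistent with the definition of $\gr_\rho$). For surjectivity, any element of $G^\rho_r(R)/G^\rho_{>r}(R)$ is represented by a finite sum $\sum_i \phi_R(g_i)$ with $g_i \in F_{m_i}(R)$ and $\langle\rho, m_i\rangle \geq r$; terms with $\langle\rho, m_i\rangle > r$ vanish in the quotient, so the class equals $\hat{\phi}_\rho\bigl(\sum_{\langle\rho, m_i\rangle = r} g_i\bigr)$.

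Next, to see that $\hat{\phi}_\rho(\m_\tau R) = 0$, I would check each of the two generator families. For $g \in F_m(R)$ and $u \in \tau^{\perp} \cap \sigma^\vee \cap M$, the element $\chi_u g$ lies in $F_{m+u}(R)$, and since $\rho \in \tau$ forces $\langle\rho, u\rangle = 0$, both $\hat{\phi}_\rho(\chi_u g)$ and $\hat{\phi}_\rho(g)$ land in the same piece $G^\rho_{\langle\rho,m\rangle}/G^\rho_{>\langle\rho,m\rangle}$; by the defining relation of $E_R = \colim F_m(R)$ we have $\phi_R(\chi_u g) = \phi_R(g)$, so $\hat{\phi}_\rho((\chi_u - 1)g) = 0$. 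For $v \in \sigma^\vee \cap M$ with $\langle\rho, v\rangle < 0$, the image $\hat{\phi}_\rho(\chi_v g)$ is the class of $\phi_R(g)$ in $G^\rho_{\langle\rho,m+v\rangle}/G^\rho_{>\langle\rho,m+v\rangle}$; since $\phi_R(g) \in G^\rho_{\langle\rho,m\rangle} \subseteq G^\rho_{>\langle\rho,m+v\rangle}$, the class is zero. This yields the induced map $\phi_\rho: R/\m_\tau R \to \gr_\rho(E_R)$.

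For injectivity when $R$ is flat, I would invoke Proposition \ref{flatness} to write $R = \colim_i R_i$ as a filtered colimit of $M$-homogeneous free modules. For a free $P = \bigoplus_k S_\sigma[b_k]$ with $\deg(b_k) = \lambda_k$, Proposition \ref{free} shows $\mathcal{L}(P) = \bigoplus_k (\k, \lambda_k)$, so that $\gr_\rho(E_P)$ has $\{[b_k]\}$ as a basis (with $[b_k]$ in $\rho$-degree $\langle\rho, \lambda_k\rangle$); a direct computation gives $P/\m_\tau P \cong \bigoplus_k \k[b_k]$, and $\phi_\rho^P$ matches these bases, so is an isomorphism. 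Both constructions commute with filtered colimits: $R \mapsto R/\m_\tau R \cong R \otimes_{S_\sigma}(S_\sigma/\m_\tau)$ by the standard compatibility of tensor products with colimits; for $R \mapsto \gr_\rho(E_R)$, each graded-piece functor $F_m$ is exact on $\Mod_{S_\sigma}^M$, so $E_R = \colim_m F_m(R) = \colim_i E_{R_i}$ by interchanging colimits, then $G^\rho_r(R) = \colim_i G^\rho_r(R_i)$, and exactness of filtered colimits passes to the quotient. By naturality, $\phi_\rho^R = \colim_i \phi_\rho^{R_i}$ is a filtered colimit of isomorphisms, hence itself an isomorphism and in particular injective. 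I expect the compatibility of $\gr_\rho(E_{(-)})$ with filtered colimits to be the main technical obstacle, as it requires tracking the filtration levels $G^\rho_r$ through the directed system.

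Finally, if $R$ is an algebra, the $M$-grading is multiplicative, so $\phi_R: R \to E_R$ is an algebra homomorphism, and the quasivaluation property $\v_R(fg) \geq \v_R(f) + \v_R(g)$ gives $G^\rho_r(R) \cdot G^\rho_s(R) \subseteq G^\rho_{r+s}(R)$, so $\gr_\rho(E_R)$ inherits a graded algebra structure. Since $\hat{\phi}_\rho$ is the composition of $\phi_R$ with the graded-algebra projection, $\phi_\rho$ is an algebra map.
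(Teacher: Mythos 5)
Your proof is correct and follows essentially the same route as the paper: surjectivity and the factorization through $\m_\tau R$ by checking the two generator families, then injectivity via Lazard's theorem by verifying the free case and passing to filtered colimits using that both $R \mapsto R/\m_\tau R$ and $R \mapsto \gr_\rho(E_R)$ commute with colimits. Your free-module step (matching the adapted basis of $\gr_\rho(E_P)$ against the basis of $P/\m_\tau P$) is a slightly cleaner packaging of the paper's explicit coefficient computation, but the argument is the same in substance.
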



Finally, with the following proposition we recover the fact that a vector bundle $\E$ is captured by its Klyachko spaces. For each ray $\varrho_i \in \Sigma(1)$ let $\rho_i \in \varrho_i$ denote the first integral point.   

\begin{proposition}\label{arrangementspaces}
Let $\F \in \Sh_{Y(\Sigma)}^{M, ev}$ be flat and let $\psi$ be a piecewise linear function on $\Sigma$, then:

\begin{equation}
F_\psi(\v) = \bigcap_{\varrho_i \in \Sigma(1)} G^{\rho_i}_{\psi(\rho_i)}(\v).
\end{equation}
\end{proposition}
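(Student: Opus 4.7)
The key observation is that this proposition is essentially the global version of Proposition \ref{flatsum}, applied locally on each cone $\sigma \in \Sigma$. My plan is to reduce the claim to the affine situation covered there.

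First, the inclusion $F_\psi(\v) \subseteq \bigcap_{\varrho_i \in \Sigma(1)} G^{\rho_i}_{\psi(\rho_i)}(\v)$ is immediate: if $\v(f)(\rho) \geq \psi(\rho)$ for every $\rho \in \Sigma \cap N$, then in particular this holds on each primitive ray generator $\rho_i$. So the content is in the reverse inclusion. By the definition in the statement of the proposition we have $F_\psi(\v) = \bigcap_{\sigma \in \Sigma} F_{\psi|_\sigma}(\v|_\sigma)$, and since the rays of $\Sigma$ are precisely the union of the rays of all $\sigma \in \Sigma$, it suffices to show the equality face-by-face, i.e.\ to prove
\[
F_{\psi|_\sigma}(\v|_\sigma) = \bigcap_{\varrho_i \in \sigma(1)} G^{\rho_i}_{\psi(\rho_i)}(\v)
\]
for each $\sigma \in \Sigma$.

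Now I would fix $\sigma \in \Sigma$ and use that $\psi$ is PL on $\Sigma$: there exists $m_\sigma \in M$ such that $\psi|_\sigma = \langle -, m_\sigma\rangle$ on $\sigma \cap N$; in particular $\psi(\rho_i) = \langle \rho_i, m_\sigma \rangle$ for each ray $\varrho_i \in \sigma(1)$. Let $R_\sigma = \Gamma(Y(\sigma),\F)$. Because $\F$ is flat and lies in $\Sh_{Y(\Sigma)}^{M,ev}$, the module $R_\sigma$ is a flat, eversive $M$-graded $S_\sigma$-module, and by Theorem \ref{eversiveglobalequivalence} and Proposition \ref{localization} one has $(E,\v|_\sigma) = \mathcal{L}(R_\sigma)$, with $F_m(\v|_\sigma) = F_m(\v_{R_\sigma})$ for every $m \in M$. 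The hypotheses of Proposition \ref{flatsum} are therefore satisfied on $R_\sigma$, and applying it with $m = m_\sigma$ yields
\[
F_{\psi|_\sigma}(\v|_\sigma) = F_{m_\sigma}(\v_{R_\sigma}) = \bigcap_{\varrho_i \in \sigma(1)} G^{\rho_i}_{\langle \rho_i, m_\sigma\rangle}(R_\sigma) = \bigcap_{\varrho_i \in \sigma(1)} G^{\rho_i}_{\psi(\rho_i)}(\v),
\]
where in the last step we use that $G^{\rho_i}_r(\v|_\sigma)$ is simply $G^{\rho_i}_r(\v)$, since $\v|_\sigma$ and $\v$ agree as functions at the point $\rho_i \in \sigma \cap N$.

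Intersecting the equality above over all $\sigma \in \Sigma$ and using that $\Sigma(1) = \bigcup_{\sigma \in \Sigma} \sigma(1)$ gives the result. The only potential obstacle is bookkeeping: one has to verify that the local prevaluations $\v|_\sigma$ glue properly (which is guaranteed by Proposition \ref{localization}) and that the flatness hypothesis on $\F$ really does descend to flatness of each $R_\sigma$ as an $S_\sigma$-module, so that Proposition \ref{flatsum} can be invoked. Both are built into the definition of flatness for $T_N$-equivariant sheaves on $Y(\Sigma)$, so no further work is required.
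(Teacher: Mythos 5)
Your proof is correct and follows essentially the same route as the paper: the forward inclusion is immediate from definitions, and the reverse inclusion is obtained cone-by-cone by writing $\psi|_\sigma = \langle -, m_\sigma\rangle$ and invoking the last statement of Proposition \ref{flatsum} for the flat module $\Gamma(Y(\sigma),\F)$, then intersecting over $\sigma \in \Sigma$. The paper's proof is just a terser version of the same argument.
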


\subsection{Classification of flat sheaves of graded algebras}
As an important application of the machinery set up in Section \ref{category}, we obtain a theorem classifying $T_N$-equivariant degenerations of $\k$-algebras using quasivaluations into $\hat{\O}_\Sigma$ (Theorem \ref{generalclassification}). 
In the next section (Section \ref{Khovanskii}) we prove a more refined classification theorem (Theorem \ref{thm-main-family}). It can be regarded as Theorem \ref{generalclassification} in the presence of an additional finiteness hypothesis.

Fix a $\k$-algebra $A$, and a direct sum decomposition $A = \bigoplus_{i \in I} A_i$ (not necessarily a grading) into finite dimensional $\k$-vector spaces. 

\begin{definition}[Homogeneous flat degeneration]
A \emph{homogeneous $Y(\Sigma)$-degeneration} of $A$ is defined to be a flat sheaf of algebras $\mathcal{A} \in \Alg_{S_\Sigma}^M$ which can be written $\mathcal{A} = \bigoplus_{i \in I} \mathcal{A}_i$ where each $\mathcal{A}_i$ is coherent, such that $E_{\mathcal{A}} = \bigoplus_{i \in I} E_{\mathcal{A}_i} = \bigoplus_{i \in I} A_i = A$.  A quasivaluation $\v: A \to \hat{\O}_\Sigma$ is said to be \emph{homogeneous} if $(A, \v) \cong \bigoplus_{i \in I} (A_i, \v\!\!\mid_{A_i})$ in $\Vect_\Sigma$. \end{definition}

\begin{theorem}\label{generalclassification}
Any homogeneous degeneration is eversive, $\mathcal{L}(\A) = (A, \v)$, where $\v: A \to \hat{\O}_\Sigma$ is homogeneous, and each $\v\!\!\mid_\sigma$ has a linear adapted basis. In particular, $\v$ takes values in $\O_\Sigma$.  Moreover, all homogeneous quasivaluations such that $(A, \v\!\mid_\sigma)$ has an adapted basis for all $\sigma \in \Sigma$ arise as $\L(\A)$ for a homogeneous degeneration $\A$. 
\end{theorem}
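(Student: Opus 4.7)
The plan is to pass between homogeneous degenerations and quasivaluations via the adjoint pair $\mathcal{L}$ and $\mathcal{R}$. For the forward direction, fix a homogeneous degeneration $\A = \bigoplus_{i \in I} \A_i$ and a face $\sigma \in \Sigma$. Since each $\A_i$ is coherent and flat and $Y(\sigma)$ is affine, $\A_i(\sigma) = \Gamma(Y(\sigma), \A_i)$ is a finitely generated $M$-graded flat $S_\sigma$-module. An $M$-graded Nakayama argument (using that $S_\sigma$ has a monomial maximal ideal $\m_\sigma$ with residue field $\k$) upgrades this to freeness: $\A_i(\sigma) \cong \bigoplus_{b \in \B_{i,\sigma}} S_\sigma[\deg(b)]$ for a finite $M$-homogeneous basis $\B_{i,\sigma}$. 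Proposition \ref{free} gives eversiveness of each such summand, and Proposition \ref{eversivesum} propagates this to the direct sum, establishing that $\A(\sigma)$ is eversive. The same two propositions together with the monoidality of $\mathcal{L}$ show
\[
\mathcal{L}(\A(\sigma)) \cong \bigoplus_{i \in I} \bigoplus_{b \in \B_{i,\sigma}} (\k, \deg(b)),
\]
so this space admits the linear adapted basis $\B_\sigma := \bigsqcup_i \B_{i,\sigma}$. Gluing over $\Sigma$ by Theorem \ref{eversiveglobalequivalence} produces $(A, \v) = \mathcal{L}(\A)$ with $\v$ a homogeneous quasivaluation whose restriction to each $\sigma$ is linearly adapted.

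For the values-in-$\O_\Sigma$ claim, any nonzero $a \in A$ has finite support in the decomposition $A = \bigoplus A_i$ since each $A_i$ is finite dimensional, so $a$ is a finite $\k$-linear combination of elements of $\B_\sigma$; adaptedness yields $\v(a)\!\!\mid_\sigma$ as a minimum of finitely many elements of $M_\sigma$, hence an element of $\O_\sigma$, and these local descriptions glue to $\v(a) \in \O_\Sigma$. For the converse, given a homogeneous $\v$ with the stated properties, define $\A_i(\sigma) := \mathcal{R}(A_i, \v\!\!\mid_{A_i, \sigma})$; by Proposition \ref{free} this is $\bigoplus_{b \in \B_{i,\sigma}} S_\sigma[\deg(b)]$, free of finite rank, hence flat and coherent. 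Proposition \ref{localization} (via the eversive equivalence of Proposition \ref{eversive}) supplies compatible isomorphisms along each face inclusion $\tau \subset \sigma$, so the local modules glue to a coherent, flat, $T_N$-equivariant sheaf $\A_i$ on $Y(\Sigma)$. Setting $\A := \bigoplus_i \A_i$, the algebra structure on $\A$ is recovered from $(A, \v) \in \Alg_\Sigma$ using that $\mathcal{R}$ is lax-monoidal and becomes an equivalence of algebra objects on eversive objects; one checks $\mathcal{L}(\A) = (A, \v)$ by construction.

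The principal obstacle is the freeness step in the forward direction: one must verify that a finitely generated $M$-graded flat $S_\sigma$-module is actually $M$-graded free. This requires real use of the $M$-grading together with graded Nakayama, since $S_\sigma$ is not a local ring and ordinary Quillen-Suslin does not apply. A secondary bookkeeping point in the converse is ensuring that gluing the affine pieces $\mathcal{R}(A_i, \v\!\!\mid_{A_i,\sigma})$ respects both the decomposition $\A = \bigoplus \A_i$ and the algebra multiplication; this is exactly what the monoidality of $\mathcal{R}$ on the eversive subcategory delivers, but making the compatibility explicit on overlaps $Y(\tau)$ requires careful invocation of Proposition \ref{localization}.
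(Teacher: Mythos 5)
Your proof is correct and follows essentially the same route as the paper's: flatness plus coherence of each $\A_i$ gives local freeness, Propositions \ref{free} and \ref{eversivesum} then yield eversiveness and a linear adapted basis on each face, and Theorem \ref{eversiveglobalequivalence} supplies the converse via the Rees construction and gluing. You simply make explicit the graded-Nakayama step (flat coherent $M$-graded implies $M$-graded free) and the descent bookkeeping that the paper leaves implicit.
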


\begin{proof}
A degeneration $\mathcal{A} = \bigoplus_{i \in I} \mathcal{A}_i$ is flat if and only if each $\mathcal{A}_i$ is flat, and therefore locally projective, and therefore locally free. It follows that $\mathcal{A}$ is eversive and $(A, \v\!\!\mid_\sigma)$ has a linear adapted basis. Such a homogeneous quasivaluation $\v: A \to \O_\Sigma$ likewise corresponds under Theorem \ref{eversiveglobalequivalence} to a unique homogeneous degeneration.
\end{proof}

\section{Khovanskii bases}\label{Khovanskii}
In this section we define the notion of a Khovanskii basis for a valuation $\v: A \to \hat{\O}_\Sigma$, and we prove two of our main theorems, namely, Theorems \ref{thm-main-family} and \ref{thm-main-bundle}.  

We begin by recalling elements of the theory of Khovanskii bases for quasivaluations with values in $\overline{\Q}$. Let $\mathcal{B} = \{b_1, \ldots, b_d\} \subset A$ be a generating set. We let $\pi: \k[\bx] \to A$ be the surjection with $\pi(x_i) = b_i \in \mathcal{B}$ and $I = \ker(\pi)$. Any $w \in \Q^\mathcal{B}$ gives a valuation $\hat{\v}_w: \k[\bx] \to \overline{\Q}$ where $\hat{\v}_w(\bx^\alpha) = \langle w, \alpha \rangle$. We let $\v_w = \pi_*(\hat{\v}_w)$ and refer to $\v_w$ as the \emph{weight quasivaluation} on $A$ associated to $w \in \Q^\mathcal{B}$ (see  \cite[Definition 3.1]{Kaveh-Manon-NOK}). By construction, the quasivaluation $\v_w$ takes values in $\overline{\Z}$ when $w \in \Z^\mathcal{B}$.  We let $\gr_w(A)$ denote the associated graded algebra of the quasivaluation $\v_w$ (see Equation \eqref{equ-gr-E}). By \cite[Lemma 3.4]{Kaveh-Manon-NOK} we have $\gr_w(A) \cong \k[\bx]/\In_w(I)$ where $\In_w(I) \subset \k[\bx]$ is the initial ideal of $I$ corresponding to $w$. Moreover, $\pi(\bx) = \mathcal{B} \subset A$ is a Khovanskii basis for $\v_w$, that is, the image of $\mathcal{B}$ is a generating set for the associated graded algebra $\gr_w(A)$ (Definition \ref{khovanskiibasisdefinition}). 

Let $\GF(I) \subset \Q^\mathcal{B}$ denote the Gr\"obner fan of $I$ (see e.g. \cite[Proposition 2.4]{GBCP}). Recall that if $w, w' \in \Q^\mathcal{B}$ lie in the relative interior of the same face of $\GF(I)$ then $\In_w(I) = \In_{w'}(I)$. For a maximal cone $\tau \in \GF(I)$, we let $\B_\tau \subset A$ be the associated \emph{standard monomial basis} (see \cite[Proposition 1.1]{GBCP}).  We will need the following proposition from \cite{Kaveh-Manon-NOK}.

\begin{proposition}\label{GrobnerAdapted}\cite[Proposition 4.9]{Kaveh-Manon-NOK}
Let $\mathcal{B}=\{b_1, \ldots, b_d\} \subset A$ be a Khovanskii basis for $\v: A \to \overline{\Q}$ and put $w = (\v(b_1), \ldots, \v(b_d)) \in \GF(I)$. Then $\v = \v_w$. Moreover, if $\B_\tau \subset A$ is a standard monomial basis with $w \in \tau$, then $\B_\tau$ is adapted to $\v$ and $\v(b^\alpha) = \langle w, \alpha \rangle$ for any $b^\alpha \in \B_\tau$. 
\end{proposition}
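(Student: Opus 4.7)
The plan is to show that the surjection $\pi: \k[\bx] \to A$ induces an isomorphism of associated graded rings $\gr_\v(A) \cong \k[\bx]/\In_w(I) \cong \gr_{\v_w}(A)$, which will identify $\v$ with $\v_w$ and reduce (b) and (c) to the corresponding statements for $\v_w$ and its standard-monomial adapted basis. The easy inequality $\v \geq \v_w$ comes for free from the quasivaluation axioms: for any lift $p = \sum c_\alpha \bx^\alpha$ of $f \in A$,
\begin{equation*}
\v(f) \geq \min_{c_\alpha \neq 0} \v(b^\alpha) \geq \min_{c_\alpha \neq 0} \textstyle\sum_j \alpha_j \v(b_j) = \min_{c_\alpha \neq 0} \langle w, \alpha\rangle = \hat{\v}_w(p),
\end{equation*}
so taking the maximum over lifts gives $\v(f) \geq \v_w(f)$ and in particular $\v(b^\alpha) \geq \langle w, \alpha\rangle$ for every monomial.

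Next I would build the comparison map. Grade $\k[\bx]$ by $w$ and define $\bar\pi: \k[\bx] \to \gr_\v(A)$ by sending a $w$-homogeneous $p$ of weight $r$ to the class of $\pi(p)$ in $G_r(\v)/G_{>r}(\v)$; the Khovanskii hypothesis says exactly that $\{\bar b_i\}$ generates $\gr_\v(A)$, so $\bar\pi$ is surjective. If $p \in I$ has $w$-initial weight $r$, then $\pi(\In_w(p)) = -\pi(p - \In_w(p))$ lies in $G_{>r}(\v)$, so $\bar\pi(\In_w(p)) = 0$ and $\In_w(I) \subseteq \ker\bar\pi$.

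The main obstacle is the reverse inclusion $\ker\bar\pi \subseteq \In_w(I)$. Given a $w$-homogeneous $p$ of weight $r$ with $\pi(p) \in G_{r_1}(\v)$, $r_1 > r$, the Khovanskii property in degree $r_1$ lets me express the class of $\pi(p)$ in $G_{r_1}(\v)/G_{>r_1}(\v)$ as a $\k$-linear combination of monomial classes $\bar b^\alpha$ with $\langle w, \alpha\rangle = r_1$, which lifts to a $w$-homogeneous $p_1 \in \k[\bx]$ of weight $r_1$ with $\pi(p - p_1) \in G_{>r_1}(\v)$. Iterating these Khovanskii reductions produces a formal series $p' = \sum p_k$ with $\hat\v_w(p_k)$ strictly increasing and $\pi(p) = \pi(p')$; the positive grading on $A$ guarantees that each graded piece is finite dimensional with a bounded $\v$-filtration, so only finitely many $p_k$ are nonzero and the series terminates. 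Then $p - p' \in I$ with $\In_w(p - p') = p$, giving $p \in \In_w(I)$.

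Finally I would harvest the three conclusions. The equality $\ker\bar\pi = \In_w(I)$ together with \cite[Lemma 3.4]{Kaveh-Manon-NOK} identifies $\gr_\v(A)$ and $\gr_{\v_w}(A)$ degree-by-degree as graded $\k$-algebras; combined with $G_r(\v) \subseteq G_r(\v_w)$ from the first paragraph and the positive grading on $A$, a dimension count on each graded component of $A$ forces $G_r(\v) = G_r(\v_w)$ for all $r$, whence $\v = \v_w$, which is (a). The standard monomial basis $\B_\tau$ is adapted to $\v_w$ with $\v_w(b^\alpha) = \langle w, \alpha\rangle$ by the standard Gr\"obner theory of \cite[Proposition 1.1]{GBCP} (each $f \in A$ has a unique standard-monomial expansion and no lower-$w$-weight cancellation is possible), and by (a) the same adaptedness and value formula transfer to $\v$, giving (b) and (c).
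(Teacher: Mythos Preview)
The paper does not actually give a proof of this proposition: it is quoted verbatim from \cite[Proposition 4.9]{Kaveh-Manon-NOK} and used as a black box. So there is no ``paper's own proof'' to compare your attempt against.

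That said, your plan is the standard one and is essentially what appears in the cited reference: establish the easy inequality $\v \ge \v_w$ from the quasivaluation axioms, build the surjection $\bar\pi:\k[\bx]\to\gr_\v(A)$ and show $\In_w(I)\subseteq\ker\bar\pi$, then run subduction (Khovanskii reduction) to force $\ker\bar\pi\subseteq\In_w(I)$, and finish by a dimension comparison on graded pieces. One point to be careful about: your termination argument and your dimension count both lean on a positive-grading hypothesis on $A$ (with $\v$ homogeneous for that grading), which is not part of the proposition as stated here. In the context of \cite{Kaveh-Manon-NOK} such a hypothesis is in force, and the paper itself immediately restricts to positively graded $A$ in the applications, so this is not a real gap---but you should make the assumption explicit rather than invoking it mid-proof. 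With that caveat, your argument is correct.
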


It can happen that $\v_w = \v_{w'}$ for distinct $w, w' \in \Q^\mathcal{B}$.  To account for this,  the map $\iota: \Q^\mathcal{B} \to \Q^\mathcal{B}$ defined by $\iota(w) = (\v_w(b_1), \ldots, \v_w(b_d))$ was introduced in \cite[Section 3.2]{Kaveh-Manon-NOK}.  By \cite[Proposition 3.7]{Kaveh-Manon-NOK}, if $I$ is homogeneous with respect to a positive grading, then $\iota^2 = \iota$, and $\iota(w) = \iota(w')$ if and only if $\v_w = \v_{w'}$. It is not hard to show that $\iota$ is an integral piecewise linear map on each face of $\GF(I)$, and that the image of $\iota$ is the support of a subfan $\mathcal{K}(I) \subset \GF(I)$ which contains the tropical variety $\Trop(I)$.  Points $w \in \mathcal{K}(I)$ correspond precisely to the distinct quasivaluations with Khovanskii basis $\mathcal{B} \subset A$. 


\subsection{Khovanskii bases in $\Alg_\Sigma$}

In this section we extend the above definitions to quasivaluations into $\O_\Sigma$.  
It is more convenient to begin by considering valuations into $\hat{\O}_\Sigma$.  Recall that $\hat{\O}_\Sigma$ is the semialgebra of functions $\psi: |\Sigma| \to \overline{\Q}$ with the property that $\psi(\ell\rho) = \ell \psi(\rho)$ for all $\ell \in \Q_{\geq 0}$, and that $\Alg_\Sigma$ is the category whose objects are pairs $(A, \v)$, where $A$ is an algebra and $\v: A \to \hat{\O}_\Sigma$ is a quasivaluation (see Section \ref{category}).  

\begin{definition}\label{ONKhovanskiibasis}
A generating set $\mathcal{B} \subset A$ is said to be a \emph{Khovanskii basis} of $(A, \v) \in \Alg_\Sigma$ if it is a Khovanskii basis  of $\v_\rho: A \to \overline{\Q}$, for every $\rho \in |\Sigma|$, in the sense of Definition \ref{khovanskiibasisdefinition}. 
\end{definition}

For $(A, \v) \in \Alg_\Sigma$ and a subset $\mathcal{B} \subset A$ we define $\Phi_\mathcal{B}: |\Sigma| \to \overline{\Q}^\mathcal{B}$ to be the function $$\Phi_\mathcal{B}(\rho) = (\v_\rho(b_1), \ldots, \v_\rho(b_d)).$$  We can think of $\Phi_\mathcal{B}$ as a tuple $(\v(b_1), \ldots, \v(b_d)) \in \hat{\O}_\Sigma^d$. When it is clear from context we may drop the subscript $\mathcal{B}$.  

Recall that the tropical variety $\Trop_{\O_\Sigma}(I)$ over the semialgebra $\O_\Sigma$ can be viewed as the collection of tuples $(\phi_1, \ldots, \phi_d) \in \O_\Sigma^d$ which satisfy the \emph{tropical relations} coming from the ideal $I$ (see Section \ref{background}).  We note that a tuple $(\phi_1, \ldots, \phi_d) \in \O_\Sigma^d$ will satisfy the tropical relations coming from $I$ if and only if this is so for $(\phi_1(\rho), \ldots, \phi_d(\rho))$ for all $\rho \in |\Sigma|$. Hence, we can also view $\Trop_{\O_\Sigma}(I)$ as the set of integral piecewise linear maps $\Phi: |\Sigma| \to \Trop(I) \subset \Q^d$. 

We now define the analogues of the fans $\mathcal{K}(I)$ and $\GF(I)$ over the semialgebra $\mathcal{O}_\Sigma$.  
\begin{definition}\label{Onsets}
Let $\mathcal{K}_{\O_\Sigma}(I)$ and $\GF_{\O_\Sigma}(I)$ be the sets of integral piecewise linear maps on $|\Sigma|$  with values in the supports of fans $\mathcal{K}(I)$ and $\GF(I)$ respectively.  The sets $\mathcal{K}_{\hat{\O}_\Sigma}(I)$, and $\GF_{\hat{\O}_\Sigma}(I)$ are defined similarly. 
\end{definition}

 For any $\Phi = (\phi_1, \ldots, \phi_d) \in \hat{\O}^d_\Sigma$ there is a canonical valuation $\hat{\v}_{\Phi}: \k[\bx] \to \hat{\O}_\Sigma$ defined by:
$$\hat{\v}_\Phi(\sum_\alpha c_\alpha \bx^\alpha) = \min\{ \sum_i \alpha_i \phi_i \mid \alpha = (\alpha_1, \ldots, \alpha_d),~ c_\alpha \neq 0\}.$$ 
 In other words, we require that $\{\bx ^\alpha \mid \alpha \in \Z_{\geq 0}^d \}$ is an adapted basis for $\hat{\v}_\Phi$ and we set $\hat{\v}_\Phi(\bx^\alpha) = \sum_i \alpha_i \phi_i$, for all $\alpha=(\alpha_1, \ldots, \alpha_d) \in \Z_{\geq 0}^d$. The pushforward $\pi_*\hat{\v}_\Phi: A \to \hat{\O}_\Sigma$ is then a quasivaluation on $A$.  We use this construction to give an alternative characterization of Khovanskii bases. 

\begin{proposition}\label{Khovanskiipushforward}
Let $(A, \v) \in \Alg_\Sigma$, $\{b_1, \ldots, b_d\} = \mathcal{B} \subset A$, and suppose that $\Phi = (\v(b_1), \ldots, \v(b_d))$ lies in $\GF_{\hat{\O}_\Sigma}(I)$. Then $\mathcal{B}$ is a Khovanskii basis for $\v$ if and only if $\v = \pi_*\hat{\v}_{\Phi}$.  In this case $\Phi \in \mathcal{K}_{\hat{\O}_\Sigma}(I)$. 
\end{proposition}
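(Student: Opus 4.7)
The plan is to reduce the statement to a pointwise assertion for each $\rho \in |\Sigma|$, and then invoke Proposition \ref{GrobnerAdapted} fiberwise. The bridge between the two formulations is the elementary identity that the pushforward of $\hat{\v}_\Phi$ commutes with evaluation at $\rho$, i.e.\ $(\pi_*\hat{\v}_\Phi)(f)(\rho) = \v_{w_\rho}(f)$ where $w_\rho = \Phi(\rho) \in N_\mathcal{B}$.

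First I would establish this pointwise identity. Since the monomials $\bx^\alpha$ form a linear adapted basis for $\hat{\v}_\Phi$ on $\k[\bx]$, for any $p = \sum C_\alpha \bx^\alpha \in \k[\bx]$ one has $\hat{\v}_\Phi(p) = \bigoplus_{C_\alpha \neq 0}\sum_i \alpha_i \phi_i$ in $\hat{\O}_\Sigma$. Evaluating at $\rho$ gives $\hat{\v}_\Phi(p)(\rho) = \min_{C_\alpha \neq 0}\langle w_\rho, \alpha\rangle = \hat{\v}_{w_\rho}(p)$. Feeding this into Definition \ref{def-pushandpull} yields
\[
  (\pi_*\hat{\v}_\Phi)(f)(\rho) = \max\{\hat{\v}_\Phi(p)(\rho) : \pi(p) = f\} = \max\{\hat{\v}_{w_\rho}(p) : \pi(p) = f\} = \v_{w_\rho}(f),
\]
so $(\pi_*\hat{\v}_\Phi)_\rho = \v_{w_\rho}$ for every $\rho \in |\Sigma|$.

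Next I would use this to prove the equivalence. By Definition \ref{ONKhovanskiibasis}, $\mathcal{B}$ is a Khovanskii basis for $\v$ iff $\mathcal{B}$ is a Khovanskii basis for each $\v_\rho$. Given the hypothesis $\Phi \in \Sigma_{\hat{\O}_\Sigma}(I)$, we have $w_\rho = \Phi(\rho) \in \Sigma(I)$ for each $\rho$, so Proposition \ref{GrobnerAdapted} applies: $\mathcal{B}$ is a Khovanskii basis of $\v_\rho$ if and only if $\v_\rho = \v_{w_\rho}$. Combining with the first step, $\v_\rho = \v_{w_\rho} = (\pi_*\hat{\v}_\Phi)_\rho$ for every $\rho$, which is exactly the pointwise condition equivalent to $\v = \pi_*\hat{\v}_\Phi$. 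The converse direction is obtained by simply running this chain of equivalences backward, using that $\mathcal{B}$ is always a Khovanskii basis of the weight quasivaluation $\v_{w_\rho}$ (as recalled just before Proposition \ref{GrobnerAdapted}).

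For the final clause, assume $\mathcal{B}$ is a Khovanskii basis. Then by what was just shown, $\v_\rho = \v_{\Phi(\rho)}$ for all $\rho$, so
\[
  \iota(\Phi(\rho)) = (\v_{\Phi(\rho)}(b_1), \ldots, \v_{\Phi(\rho)}(b_d)) = (\v_\rho(b_1), \ldots, \v_\rho(b_d)) = \Phi(\rho),
\]
and hence $\Phi(\rho) \in \iota(N_{\mathcal{B}}) = |\mathcal{K}(I)|$. As $\rho$ was arbitrary, $\Phi \in \mathcal{K}_{\hat{\O}_\Sigma}(I)$. No real obstacle is expected; the only subtlety is being careful about the max/min conventions in the pushforward and in $\hat{\v}_\Phi$, which is why I would write out the evaluation-at-$\rho$ identity in full rather than relying on a naturality slogan.
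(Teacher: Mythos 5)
Your proposal is correct and follows essentially the same route as the paper: reduce to the pointwise statement $(\pi_*\hat{\v}_\Phi)_\rho = \v_{\Phi(\rho)}$, apply Proposition \ref{GrobnerAdapted} for the forward direction and the fact that $\mathcal{B}$ is always a Khovanskii basis of a weight quasivaluation for the converse, and deduce membership in $\mathcal{K}_{\hat{\O}_\Sigma}(I)$ from $\v_\rho = \v_{\Phi(\rho)}$. Your explicit verification of the evaluation-at-$\rho$ identity for the pushforward is a detail the paper leaves implicit, but the argument is the same.
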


\begin{proof}
By definition of pushforward, $(\pi_*\hat{\v}_{\Phi})_\rho$ for $\rho \in |\Sigma|$ is the weight quasivaluation $\v_{\Phi(\rho)}$. If $\mathcal{B} \subset A$ is a Khovanskii basis of $\v$, it is a Khovanskii basis of $\v_\rho$ in the sense of Definition \ref{khovanskiibasisdefinition}. By Proposition \ref{GrobnerAdapted},  $\v_\rho = \v_w$, where $w_i = \v(b_i)(\rho)$.  This means $\v_\rho = \v_{\Phi(\rho)}$ and $\Phi(\rho) \in \mathcal{K}(I)$ for all $\rho \in \Sigma \cap N$, so $\v = \pi_*\hat{\v}_{\Phi}$.  Conversely, if $\v = \pi_*\hat{\v}_{\Phi}$, then $\v_\rho = \v_{\Phi(\rho)}$. So each $\v_\rho$ has $\mathcal{B}$ as a Khovanskii basis. 
\end{proof}

Recall that if $I$ is homogeneous with respect to a positive grading on $\k[\bx]$ then the support of $\GF(I)$ is  all of $\Q^\mathcal{B}$, so the condition $\Phi \in \GF_{\hat{\O}_\Sigma}(I)$ is satisfied for any map $\Phi: |\Sigma| \to \overline{\Q}^\mathcal{B}$ whose components are in $\hat{\O}_\Sigma$.  

Now we deal with quasivaluations with values in $\O_\Sigma$.

\begin{proposition}\label{values}
Let $A$ be a positively graded $\k$-algebra with a Khovanskii basis $\mathcal{B}=\{b_1, \ldots, b_d\}$. For any quasivaluation $\v: A \to \O_\Sigma$ let $\Phi = (\v(b_1), \ldots, \v(b_d))$. We have $\Phi \in \mathcal{K}_{\O_\Sigma}(I)$, and moreover, if $\v$ is a valuation, $\Phi \in \Trop_{\O_\Sigma}(I)$. The set $\mathcal{K}_{\O_\Sigma}(I)$ is in bijection with the set of quasivaluations with Khovanskii basis $\mathcal{B}$. 
\end{proposition}

\begin{proof}
For any quasivaluation $\v: A \to \O_\Sigma$ with $\Phi$ as above and $\rho \in |\Sigma| \cap N$ we have $\Phi(\rho) \in \mathcal{K}(I)$.  Now let $\v, \v'$ be quasivaluations with Khovanskii basis $\mathcal{B}$ with tuples $\Phi$ and $\Phi'$, and let $\rho \in \Sigma \cap N$. If $\Phi = \Phi'$, then $\v_\rho(b) = \v'_\rho(b)$ for all $b \in \mathcal{B}$.  By Proposition \ref{GrobnerAdapted} we must have $\v_\rho = \v'_{\rho}$ for all $\rho \in |\Sigma|\cap N$, so $\v = \v'$. Moreover, the fibers of $\mathcal{A}$ coincide with the associated graded algebras of the $\v_\rho$ by Proposition \ref{fiber}.  The fibers of the flat family are domains if and only if the $\v_\rho$ are valuations, so we may use Proposition \ref{trop}. 
\end{proof}

By abuse of notation we let $\iota: \GF_{\O_\Sigma}(I) \to \GF_{\O_\Sigma}(I)$ denote the map $\Phi \to \iota \circ \Phi$. It is straightforward to check that $\iota^2 = \iota$ and that the image of $\iota$ is $\mathcal{K}_{\O_\Sigma}(I)$.

\subsection{Classification of toric flat families: proof of Theorem \ref{thm-main-family}}
In this section we complete the proof of Theorem \ref{thm-main-family}. We start with some preparation.
Let $\A$ be a flat sheaf of algebras over a toric variety $Y(\Sigma)$ with general fiber $A$. As usual $\A$ gives us a toric flat family $\pi: \mathcal{X} \to Y(\Sigma)$ constructed as the relative spectrum $\mathcal{X} = \Spec_{Y(\Sigma)}(\A)$. 

First we show that finite Khovanskii bases for $A$ appear when $\mathcal{X}$ is of finite type. We start with the case where the base is an affine toric variety. Recall that $S_\sigma = \k[\sigma^\vee \cap M]$ denotes the affine semigroup algebra associated to a cone $\sigma$.  

\begin{proposition}\label{finitetype}
Let $R$ be a finitely generated $S_\sigma$-algebra giving a sheaf of algebras $\mathcal{A}$ over $Y(\sigma)$. Let $A$ be the fiber of $\mathcal{A}$ over the distinguished point $x_0$ in $Y(\sigma)$ and let $\mathcal{L}(R) = (A, \v) \in \Alg_\sigma$ (see Definition \ref{def-functor-L}). Then $(A, \v)$ has a finite Khovanskii basis $\mathcal{B}$. Moreover, if $R$ is flat and positively graded then $R$ is a finitely generated $S_\sigma$-algebra if and only if the special fiber $R/\m_0 R$ is a finitely generated $\k$-algebra. 

In this latter case, $\mathcal{B} \subset A$ can be chosen so that $\Phi_\mathcal{B}$ is linear on $\sigma$, $\Phi_\mathcal{B}(\sigma)$ is contained in a face of $\mathcal{K}(I)$, and $\v$ has a linear adapted basis given by the standard monomials $\B_\tau$ for any maximal face $\tau \in \GF(I)$ which contains the image $\Phi_\mathcal{B}(\sigma)$.  
\end{proposition}

\begin{proof}
There is a presentation $\hat{\pi}: S_\sigma[\bx] \to R$, let $\hat{b}_i = \hat{\pi}(x_i) \in F_{\lambda_i}(R)$, and $b_i = \phi_R(\hat{b}_i) \in \phi_R(F_{\lambda_i}(R)) \subset A$.  By assumption, for each face $\eta \subset \sigma$, the images of the $\hat{b}_i$ in $R/\m_\eta R$ generate as a $\k$-algebra. Each map $\phi_\rho: R/\m_\eta R \to \gr_\rho(A)$ is a surjection of algebras (Proposition \ref{fiber}), hence the images of the $\hat{b}_i$ generate $\gr_\rho(A)$ as well. By definition, the image $\phi_\rho(\hat{b}_i)$ is in $F_{\lambda_i}(R) \subset G^\rho_{\langle \rho, \lambda_i \rangle}(R)/G^\rho_{> \langle \rho, \lambda_i \rangle}(R)$. As a consequence, for each $b_i \in A$, $\phi_\rho(\hat{b}_i)$ coincides with the equivalence class $\tilde{b}_i \in \gr_\rho(A)$.

Let $R$ be a flat, positively graded $S_\sigma$-algebra, $R = \bigoplus_{n \geq 0} R_n$, where each $R_n$ is a coherent $S_\sigma$ module. Then each $R_n$ is free as an $S_\sigma$-module, so it follows that $\mathcal{L}(R)$ has a linear adapted basis $\mathbb{B}$, and we may write $R = \bigoplus_{b \in \B} S_\sigma \otimes \k b$.  Let $\mathcal{B} \subset \B$ be such that the equivalence classes $\tilde{\mathcal{B}} \subset R/\m_0 R$ generate as a $\k$-algebra, then a graded version of Nakayama's lemma implies that $\mathcal{B}$ generates $R$.  

For any $\rho \in \sigma^\circ$, the relative interior of $\sigma$, the set $\tilde{\mathcal{B}} \subset \gr_\rho(A)$ is a generating set, so \cite[Algorithm 2.18]{Kaveh-Manon-NOK} implies that $\mathcal{B} \subset A$ is a Khovanskii basis of $\v_\rho$.  We can also apply this argument to the restriction of $\mathcal{L}(R)$ to a face $\eta \subset \sigma$, so it follows that $\mathcal{B} \subset A$ is a Khovanskii basis of $\mathcal{L}(R)$.  From above, the presentation of $R/\m_{\eta} R$ by the image of the Khovanskii basis can be obtained by composing the inclusion $\k[\bx] \subset S_\sigma[\bx]$ with $\hat{\pi}$ and the quotient map.  The kernel of this presentation is $\In_\rho(I)$.  It follows that if $\rho, \rho' \in \eta^\circ$ then $\In_\rho(I) = \In_{\rho'}(I)$, so $\Phi(\rho)$ and $\Phi(\rho)$ must lie in the same face of the Gr\"obner fan $\GF(I)$. This implies that $\rho$ and $\rho'$ are in the same face of the subfan $\mathcal{K}(I) \subset \GF(I)$.  By construction, the set $\tilde{\mathcal{B}} \subset A$ is $\k$-linearly independent, and each $\v(b)$ defines a linear function on $\sigma$.  Let $\tau \in \GF(I)$ be any maximal cone which contains $\Phi(\sigma)$, then for any $\rho \in \sigma$, the standard monomials for $\tau$ are an adapted basis of $\v_{\Phi(\rho)}$.  It follows that the corresponding monomials in $\mathcal{B} \subset R$ form an $S_\sigma$ basis of $R$. 
\end{proof}


In the next Proposition we give criteria for $\v:A \to \O_\Sigma$ to come from a flat sheaf of algebras on $Y(\Sigma)$.

\begin{proposition}\label{localbasis}
Suppose for each $\sigma \in \Sigma$ the restriction $(A, \v\!\!\mid_\sigma)$ has a finite Khovanskii basis $\mathcal{B}_\sigma$, and that the image $\Phi_\mathcal{B_\sigma}(\sigma \cap N) \subset \Z^\mathcal{B_\sigma}$ lies in a face of the Gr\"obner fan $\GF(I_\mathcal{B_\sigma})$. Then $(A, \v\!\!\mid_\sigma) \in \Alg_\sigma$ has an adapted basis.  Moreover, if $\v(b_i)\!\!\mid_\sigma \in M_\sigma \subset \O_\sigma$ for each $b_i \in \mathcal{B}_\sigma$ then $(A, \v) = \mathcal{L}(\mathcal{A})$ for a locally free family of algebras $\A$ on $Y(\Sigma)$. 
\end{proposition}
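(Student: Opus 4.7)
The plan is to produce, for each $\sigma \in \Sigma$, an adapted basis from the Khovanskii-basis hypothesis using the Gr\"obner-theoretic Proposition \ref{GrobnerAdapted}, to upgrade to a linear adapted basis under the additional assumption, and then to glue the resulting locally free Rees sheaves via Proposition \ref{localization} and Theorem \ref{eversiveglobalequivalence}.

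First I would fix $\sigma \in \Sigma$, write $\Phi_\sigma := \Phi_{\mathcal{B}_\sigma}$, and pick a maximal cone $\tau$ of the Gr\"obner fan $\Sigma(I_{\mathcal{B}_\sigma})$ containing the face that houses the image $\Phi_\sigma(\sigma \cap N)$. By Proposition \ref{Khovanskiipushforward}, the Khovanskii hypothesis identifies $(\v\!\!\mid_\sigma)_\rho$ with the weight quasivaluation $\v_{\Phi_\sigma(\rho)}$ for every $\rho \in \sigma \cap N$; Proposition \ref{GrobnerAdapted} then yields that the standard monomial basis $\mathbb{B}_\tau \subset A$ is adapted to each $\v_{\Phi_\sigma(\rho)}$. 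Evaluated pointwise, this produces the identity $\v\!\!\mid_\sigma(\sum C_b b) = \bigoplus_{C_b \neq 0} \v(b)$ in $\hat{\O}_\sigma$ for any $\k$-linear combination of elements of $\mathbb{B}_\tau$, which is exactly the criterion in Definition \ref{linearadaptedbasis} for $\mathbb{B}_\tau$ to be an adapted basis of $(A, \v\!\!\mid_\sigma)$. For the moreover, the formula $\v(b^\alpha)\!\!\mid_\sigma = \sum_i \alpha_i \v(b_i)\!\!\mid_\sigma$ from Proposition \ref{GrobnerAdapted}, combined with the hypothesis $\v(b_i)\!\!\mid_\sigma \in M_\sigma$, shows that every standard monomial has a linear value on $\sigma$, so this adapted basis is moreover linear.

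With a linear adapted basis in hand, Proposition \ref{free} identifies $\mathcal{R}(A, \v\!\!\mid_\sigma)$ with a free $M$-homogeneous $S_\sigma$-module, and the lax-monoidal property of $\mathcal{R}$ together with $(A, \v) \in \Alg_\Sigma$ upgrades this to a free $S_\sigma$-algebra whose associated sheaf on $Y(\sigma)$ is locally free as an $\O_{Y(\sigma)}$-module. To assemble these local pieces I would invoke Proposition \ref{localization} for each face $\tau' \subset \sigma$: it identifies $\mathcal{L}_{\tau'}\bigl(\mathcal{R}(A, \v\!\!\mid_\sigma)\otimes_{S_\sigma}S_{\tau'}\bigr)$ with $(A, \v\!\!\mid_{\tau'})$, and since both $\mathcal{R}(A, \v\!\!\mid_\sigma)\otimes_{S_\sigma}S_{\tau'}$ and $\mathcal{R}(A, \v\!\!\mid_{\tau'})$ are free (hence eversive), the equivalence of Proposition \ref{eversive} furnishes a canonical isomorphism between them as $S_{\tau'}$-algebras. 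These isomorphisms form descent data that, by Theorem \ref{eversiveglobalequivalence}, glue to a sheaf of algebras $\mathcal{A}$ on $Y(\Sigma)$ that is locally free as an $\O_{Y(\Sigma)}$-module and satisfies $\mathcal{L}(\mathcal{A}) = (A, \v)$.

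The main obstacle will be that the Khovanskii bases $\mathcal{B}_\sigma$, the Gr\"obner cones $\tau$, and the standard monomial bases $\mathbb{B}_\tau$ can all vary with $\sigma$, so there is no a priori global adapted basis available to glue directly. Routing the construction through the intrinsic Rees modules $\mathcal{R}(A, \v\!\!\mid_\sigma)$ and letting Proposition \ref{localization} play the role of the cocycle condition bypasses this; the only non-trivial bookkeeping is the verification that each $\mathcal{R}(A, \v\!\!\mid_\sigma)\otimes_{S_\sigma}S_{\tau'}$ is again eversive so that the equivalence on $\Alg_\sigma^{ev}$ may be applied.
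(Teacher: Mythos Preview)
Your proposal is correct and follows the same approach as the paper: invoking Proposition~\ref{GrobnerAdapted} to see that the standard monomial basis $\mathbb{B}_\tau$ associated to a maximal Gr\"obner cone $\tau \supseteq \Phi_{\mathcal{B}_\sigma}(\sigma)$ is adapted to every $\v_\rho$, and then reading off linearity from the formula $\v(b^\alpha)(\rho) = \langle \Phi_\sigma(\rho), \alpha \rangle$. The paper's proof stops at establishing the linear adapted basis on each $\sigma$, leaving the passage to a locally free sheaf $\mathcal{A}$ implicit via the earlier classification results (Proposition~\ref{free} and its corollary, or Theorem~\ref{generalclassification}); your second and third paragraphs simply unpack that step by explicitly gluing the Rees algebras through Proposition~\ref{localization} and the eversive equivalence, which is a perfectly valid and arguably clearer way to finish.
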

\begin{proof}
By Proposition \ref{GrobnerAdapted}, $\v_\rho = \v_{\Phi_{\mathcal{B}_\sigma}(\rho)}$.  It follows that $\B_\tau$ is an adapted basis of $\v_\rho$ for any $\rho \in \sigma \cap N$, where $\Phi_{\mathcal{B}_\sigma}(\sigma) \subset \tau \subset \GF(I_{\mathcal{B}_\sigma})$.  If $\v(b_i)\!\!\mid_\sigma \in M_\sigma$ for $b_i \in \mathcal{B}_\sigma$, then $\v(b^\alpha)\!\!\mid_\sigma \in M_\sigma$ for any $b^\alpha \in \B_\tau$ by Proposition \ref{GrobnerAdapted}, so $\B_\tau \subset A$ is a linear adapted basis for $(A, \v\!\!\mid_\sigma) \in \Vect_\sigma$. \\
\end{proof}


\begin{proof}[Proof of Theorem \ref{thm-main-family}]
Suppose $\pi: \mathcal{X} \to Y(\Sigma)$ is flat and of finite-type, where $\mathcal{X} = \Spec_{Y(\Sigma)}(\A)$, and $\A$ is positively graded. Let $R_\sigma$ be the coordinate ring of the induced family over the affine chart $Y(\sigma) \subset Y(\Sigma)$. Proposition \ref{finitetype} implies that $(A, \v\!\!\mid_\sigma)$ has a finite Khovanskii basis $\mathcal{B}_\sigma \subset A$ for each $\sigma \in \Sigma$, $\Phi_{\mathcal{B}_\sigma}$ is linear, and $\Phi_{\mathcal{B}_\sigma}(\sigma)$ lies in a face of $\mathcal{K}(I_{\mathcal{B}_\sigma})$. Taking the union $\mathcal{B} = \bigcup_{\sigma \in \Sigma} \mathcal{B}_\sigma$ gives a finite Khovanskii basis of $\mathcal{L}(\A)$.  Let $I$ be the ideal of relations among the elements of $\mathcal{B}$.  For any $\rho \in \sigma \cap N$, the point $\Phi(\rho) \in \mathcal{K}(I)$ is computed  by finding a standard monomial expression for each $b \in \mathcal{B}$ with respect to monomials in $\mathcal{B}_\sigma$: $b = \sum C_{\alpha} b^\alpha$, then taking the minimum $\min\{ \langle \Phi_{\mathcal{B}_\sigma}(\rho), \alpha \rangle \mid C_\alpha \neq 0 \}$.  Let $\prec_\sigma$ be any monomial order corresponding to a maximal face of $\Sigma(I_{\mathcal{B}_\sigma})$ which contains $\Phi_{\mathcal{B}_\sigma}(\sigma)$, and let $\prec$ be any extension of $\prec_\sigma$ to $I \subset \k[\bx]$ which makes elements of $\mathcal{B}_\sigma$ strictly larger than elements of $\mathcal{B}\setminus \mathcal{B}_\sigma$. By considering the $\mathcal{B}_\sigma$ standard monomial expression for $b \in \mathcal{B}$, we see that $b \in \mathcal{B}\setminus \mathcal{B}_\sigma$ and the monomials in $\In_{\prec_\sigma}(I_{\mathcal{B}_\sigma})$ generate the initial ideal $\In_\prec(I)$.   It follows that the standard monomials of $\prec_\sigma$ are the standard monomials of $\prec$, and that $\Phi(\sigma)$ lies in the closure of the  corresponding maximal face of $\GF(I)$. 

 By Proposition \ref{localbasis}, if $(A, \v) \in \Alg_\Sigma$, each $(A, \v\!\!\mid_\sigma)$ has a finite Khovanskii basis, $\Phi_{\mathcal{B}_\sigma}$ is linear, and $\Phi_{\mathcal{B}_\sigma}(\sigma)$ is contained in a face of $\mathcal{K}(I_{\mathcal{B}_\sigma})$, then $(A, \v) = \mathcal{L}(\A)$, where $\A$ is a locally free sheaf of algebras of finite type.  Finally, Theorem \ref{generalclassification} implies that $\A$ and $\mathcal{L}(\A)$ can be recovered from one another.  If $\Sigma$ is finite, then $\mathcal{B} = \bigcup_{\sigma \in \Sigma} \mathcal{B}_\sigma$ is a finite Khovanskii basis of $\mathcal{L}(\A)$.  Moreover, $\v$ is a valuation if and only if $\mathcal{X}$ has reduced, irreducible fibers; in this case $\Phi \in \Trop_{\O_\Sigma}(I)$. 
\end{proof}

\subsection{The diagram of a toric flat family}
Proposition \ref{values} implies that any quasivaluation $\v: A \to \O_\Sigma$ with Khovanskii basis $\mathcal{B}$ is determined by its values on $\mathcal{B}$. We represented this set of values by an element $\Phi \in \mathcal{K}_{\O_\Sigma}(I)$. For a cone $\sigma \in \Sigma$, Proposition \ref{finitetype} then shows that the points $\Phi(\rho_i) \in \Q^d$ for the ray generators $\rho_i$ of $\sigma$, must all lie in a common maximal face $\tau \in \GF(I)$.  Moreover, it is immediate that the restriction of $\v(b^\alpha)$ to $\sigma$ is a linear function for any member of the standard monomial basis associated to $\tau$.  These observations suggest the main idea of this section: \emph{the values of $\Phi$ on ray generators of $\Sigma$ are enough information to determine the flat family.} To be compatible with the fan $\Sigma$, these values must satisfy ``compatibility conditions'' with respect to certain subcomplexes of $\mathcal{K}(I)$. This is a far extension of the Klyachko data of a toric vector bundle.

\begin{definition}\label{def-apartment}
An \emph{apartment} $A_\tau \subset \mathcal{K}(I)$ is a subcomplex of the form $\mathcal{K}(I) \cap \tau$, where $\tau$ is a maximal face of the Gr\"obner fan $\GF(I)$. 
\end{definition}

Let $\B_\tau\subset A$ be the standard monomial basis associated to a facet $\tau \in \GF(I)$, and let $\pi_\tau: \mathcal{K}(I) \to \Q^{\mathcal{B}\cap \B_\tau}$ be the projection along the components associated to $\mathcal{B} \cap \B_\tau$.   For any monomial $\bx^\alpha \in \B_\tau$, $x_i \mid \bx^\alpha$ only if $x_i$ corresponds to an element of $\mathcal{B} \cap \B_\tau$.  It follows that $w \in A_\tau$ is determined by the projection $\pi_\tau(w)$.  Let $\phi_\tau: \Q^{\mathcal{B} \cap \B_\tau} \to \Q^\mathcal{B}$ be the map which takes the value $\min\{\langle w, \alpha\rangle \mid b = \sum C_\alpha b^\alpha, C_\alpha \neq 0\}$ on the component corresponding to $b$, where $b = \sum C_\alpha b^\alpha$ is the expression of $b$ in the basis $\B_\tau$. 

\begin{definition}[$\Sigma$-adapted map]
We say $\Phi: |\Sigma| \to \mathcal{K}(I)$ is \emph{adapted} to $\Sigma$ if for every $\sigma \in \Sigma$ there is an apartment $A_\tau$ such that $\Phi(\sigma) \subset A_\tau$ and $\pi_\tau\circ \Phi: \sigma \to \Q^{\mathcal{B} \cap \B_\tau}$ is a linear map.
\end{definition}

\begin{definition}[Diagram of a piecewise linear map]
Let $\Phi: |\Sigma| \to \mathcal{K}(I)$ be as above, then the diagram $D(\Phi)$ is defined to be the matrix whose $(j, i)$-th entry is $\v_{\Phi(\rho_j)}(b_i)$, where $\rho_j$ is the ray generator of $\varrho_i \in \Sigma(1)$ and $b_i \in \mathcal{B}$. 
\end{definition}

We can now give a proof Corollary \ref{cor-main-family} from the introduction.
\begin{proposition}[Corollary \ref{cor-main-family}] \label{prop-diagram}
If $\Phi$, $\Phi': |\Sigma| \to \mathcal{K}(I)$ and $D(\Phi) = D(\Phi')$ then $\Phi = \Phi'$.   Moreover, if $\Phi: N_\Q \to \mathcal{K}(I)$ is a $\Sigma$-adapted map for $\Sigma \subset N_\Q$, then $\Phi$ defines a toric flat family with general fiber $\Spec(A)$ over $Y(\Sigma)$.  
\end{proposition}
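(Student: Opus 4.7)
The plan is to establish both halves via a canonical explicit formula reconstructing $\Phi\!\!\mid_\sigma$ from the ray values $\{\Phi(\rho_j)\}$. Fix a cone $\sigma \in \Sigma$ with primitive ray generators $\rho_1,\dots,\rho_k$, and let $A_\tau$ be an apartment with $\Phi(\sigma) \subset A_\tau$ as supplied by the $\Sigma$-adapted hypothesis, so $\pi_\tau\circ \Phi\!\!\mid_\sigma$ is linear. For $\rho = \sum_j c_j \rho_j \in \sigma$ set $w := \sum_j c_j\Phi(\rho_j) \in N_\mathcal{B}$. Because the Gr\"obner cone $\tau$ is convex and each $\Phi(\rho_j) \in A_\tau \subset \tau$, we have $w \in \tau$, so by Proposition \ref{GrobnerAdapted} the value $\iota(w)_i = \v_w(b_i)$ equals the minimum of $\langle w,\alpha\rangle$ over standard monomial expansions $b_i = \sum C_\alpha b^\alpha$ with $\bx^\alpha \in \B_\tau$. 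Since such monomials only involve variables indexed by $\mathcal{B}\cap \B_\tau$, this minimum equals $\min_\alpha \langle \pi_\tau(w),\alpha\rangle$. On the other hand, the apartment recipe $\phi_\tau\circ \pi_\tau$ reconstructs $\Phi(\rho)_i$ as the same minimum, applied to $\pi_\tau(\Phi(\rho)) = \sum_j c_j\pi_\tau(\Phi(\rho_j)) = \pi_\tau(w)$ by linearity. We conclude
\[
\Phi(\rho) \;=\; \iota\!\Bigl(\textstyle\sum_j c_j\,\Phi(\rho_j)\Bigr).
\]

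For the uniqueness statement, the right-hand side depends only on the ray values $\Phi(\rho_j)$ (the entries of $D(\Phi)$) and the intrinsic idempotent $\iota$; in particular it makes no reference to the auxiliary apartment $A_\tau$. Hence $D(\Phi) = D(\Phi')$ forces $\Phi = \Phi'$ on every $\sigma \in \Sigma$, and therefore on all of $|\Sigma|$. For the second half, given a $\Sigma$-adapted $\Phi$, define $\v := \pi_*\hat{\v}_\Phi : A \to \O_\Sigma$ as in Proposition \ref{Khovanskiipushforward}; this is tautologically a quasivaluation satisfying $\v(b_i) = \Phi(\cdot)_i$ and admitting $\mathcal{B}$ as a Khovanskii basis. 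For each $\sigma$ and each $b^\alpha \in \B_\tau$, we have $\v(b^\alpha)\!\!\mid_\sigma = \langle \Phi(\cdot),\alpha\rangle\!\!\mid_\sigma$, which is a linear element of $M_\sigma \subset \O_\sigma$ because $\alpha$ is supported on $\mathcal{B}\cap \B_\tau$ and $\pi_\tau\circ\Phi\!\!\mid_\sigma$ is linear. Thus $\B_\tau$ is a linear adapted basis of $(A,\v\!\!\mid_\sigma)$ in the sense of Definition \ref{linearadaptedbasis}, and Theorem \ref{thm-main-family} produces the desired finite-type affine toric flat family $\mathcal{X} = \Spec_{Y(\Sigma)}(\A)$ whose general fiber is $\Spec(A)$.

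The principal technical content sits in the canonical formula above: its independence of the apartment $A_\tau$ is exactly what enforces consistency of $\Phi$ (and hence of $\v$) on shared faces $\sigma_1 \cap \sigma_2 \in \Sigma$, and is what makes the diagram assignment $\Phi \mapsto D(\Phi)$ injective on $\Sigma$-adapted maps. Once this is established, everything else is a direct unwinding of the definitions of $\iota$, of apartments in $\mathcal{K}(I)$, and of linear adapted bases, followed by a citation of Theorem \ref{thm-main-family}.
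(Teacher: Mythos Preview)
Your proof is correct and follows essentially the same approach as the paper: both derive the key identity $\Phi(\rho) = \iota\bigl(\sum_j c_j\,\Phi(\rho_j)\bigr)$ by expanding each $b_i$ in the standard monomial basis $\B_\tau$ and using that $\pi_\tau\circ\Phi\!\!\mid_\sigma$ is linear, then invoke Theorem~\ref{thm-main-family} for the second half. Your version is somewhat more explicit in its use of the projection $\pi_\tau$ and the reconstruction map $\phi_\tau$, and in emphasizing that the formula's independence of the chosen apartment $A_\tau$ is what makes the diagram data consistent across faces of $\Sigma$, but the mathematical content is the same.
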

\begin{proof}
 By assumption, for any $\sigma \in \Sigma$, $\Phi(\sigma)$ is in some apartment $A_\tau \subset \mathcal{K}(I)$.   For any $\rho, \rho' \in \sigma$ and $b \in \mathcal{B}$ we have $\v_{\Phi(\rho + \rho')}(b) = \min\{ \langle \Phi(\rho + \rho'), \alpha \rangle \mid C_\alpha \neq 0\}$, where $b = \sum C_\alpha b^\alpha$ is the standard monomial expression for $b$ with respect to $\tau$.  We have $\v_{\Phi(\rho + \rho')}(b^\alpha) = \v_{\Phi(\rho) + \Phi(\rho')}(b^\alpha)$ on standard monomials, so $\v_{\Phi(\rho + \rho')}(b) = \v_{\iota(\Phi(\rho) + \Phi(\rho'))}(b)$.   It follows that $\Phi(\sigma)$ is determined by the image of the ray generators $\rho_i \in \varrho_i \in \sigma(1)$.   Now, if $\Phi: N_\Q \to \mathcal{K}(I)$ is as above, then for any $\sigma \in \Sigma$, the quasivaluations $\v_{\Phi(\rho)}$ have a common linear adapted basis given by the standard monomials associated to $\tau$, so we may use Theorem \ref{thm-main-family}. 
\end{proof}

\begin{remark}\label{rem-simplicialdiagrams}
Given a simplicial fan $\Sigma \subset N_\Q$ and a generating set $\mathcal{B} \subset A$, we can produce toric flat families with general fiber $\Spec(A)$ by choosing $w_i \in \mathcal{K}(I)$ in bijection with the rays of $\Sigma$ such that for each $\sigma \in \Sigma$, the $w_i$ corresponding to $\varrho_i \in \sigma(1)$ lie in a common apartment $A_\tau$.  Proposition \ref{prop-diagram} can then be used to define adapted piecewise linear maps $\Phi \!\! \mid_\sigma: \sigma \to \mathcal{K}(I)$, and glue them to form $\Phi: |\Sigma| \to \mathcal{K}(I)$.  To extend this construction to arbitrary fans, the $w_i$ must be chosen to satisfy any linear relations that hold among the ray generators $\rho_i$.
\end{remark}

\begin{remark}\label{rem-apartment}
The term ``apartment" is taken from the theory of buildings (see \cite{Kaveh-Manon-Building}).  If $L \subset \k[\bx]$ is a linear ideal, the maximal faces $\tau_\mathbb{B} \subset \Sigma(L)$ correspond to bases $\mathbb{B} \subset \mathcal{B}$.  The set $\tau_\mathbb{B} \cap \Trop(L)$ is then identified with the set of valuations on $\k[\bx]/L$ with adapted basis given by the monomials in $\B$.  The valuation $\v_\rho$ corresponding to a point $\rho \in \tau_\mathbb{B}$ is entirely determined by its values on $\mathbb{B}$, so $\tau_\mathbb{B} \cap \Trop(L) \cong \Q^m$, where $m = |\mathbb{B}|$.    Under the natural identification of the spherical building of $\GL_m(\k)$ with the valuations on $\k[\bx]/L$ framed by bases, $\phi_\mathbb{B}(\Q^m)$ is mapped onto the apartment of the spherical building corresponding to $\mathbb{B}$. 
\end{remark}

\begin{example}\label{ex-canonical}
Let $\Sigma$ be any fan which refines $\GF(I)$, then the map $\iota: \GF(I) \to \mathcal{K}(I)$
defines a canonical piecewise-linear map $\iota: |\Sigma| \to \mathcal{K}(I)$. Corollary \ref{cor-main-family} then implies that there is an associated toric flat family over $Y(\Sigma)$. The diagram of this family can be computed by applying $\iota$ to the integral generators of rays of $\Sigma$. 
\end{example}

\subsection{Toric vector bundles as tropical points: proof of Theorem \ref{thm-main-bundle}}
For any $T_N$-vector bundle $\mathcal{E}$ on $Y(\Sigma)$ there is a $\Z_{\geq 0}$-graded locally-free, $T_N$-sheaf $\mathcal{O}_\mathcal{E}= \Sym(\E^*)$.  If $(E, \v) = \mathcal{L}(\E)$, then by Theorem \ref{generalclassification}, this is the degeneration corresponding to the algebra $\Sym(E^*, \v^*) = (\Sym(E^*), s(\v^*))$.   In this case, $s(\v^*): \Sym(E^*) \to \O_\Sigma \subset \hat{\O}_\Sigma$ is a valuation.  

\begin{proof}[Proof of Theorem \ref{thm-main-bundle}]
 Given a toric vector bundle $\E$, we consider the prevalued vector space $(E^*, \v^*) = \mathcal{L}(\E^*)$ associated to the dual bundle (Example \ref{dual}).   Theorem \ref{eversiveglobalequivalence} implies that the restriction of $\v^*$ to each face $\sigma \in \Sigma$ has a linear adapted basis.  In turn, we obtain a linear adapted basis of monomials in each $(\Sym^n(E^*), s^n(\v^*)\!\!\mid_\sigma)$ (Example \ref{schurbasis}).  It follows that $\B_\sigma$ is a Khovanskii basis for each $s(\v^*)\!\!\mid_\sigma$ and $\mathcal{B} = \cup_{\sigma \in \Sigma} \B_\sigma$ is a Khovanskii basis for $s(\v^*)$.  By construction, $\E$ can be recovered from $(E^*, \v^*)$, so two distinct bundles yield distinct valuations on $\Sym(E^*)$.  Conversely, if $\v: \Sym(E^*) \to \O_\Sigma$ has Khovanskii basis $\mathcal{B}$ with linear adapted bases $\B_\sigma$, then $(E, \w)$ classifies a toric vector bundle, where $\w$ is the dual of the restriction of $\v$ to $E^*$.  Now, $s(\w^*)$ and $\v$ take the same values on a common Khovanskii basis $\mathcal{B}$, so they must coincide by Corollary \ref{values}. 
\end{proof}

Recall that a set of polynomials $\{f_1, \ldots, f_m\} \subset I$ is said to be a tropical basis of $I$ \cite[Definition 2.6.3]{MSt} if $\Trop(I) = \bigcap_{i =1}^m \Trop(\langle f_i \rangle)$.  

\begin{proposition}\label{tropistrop}


If $\{f_1, \ldots, f_m\} \subset I$ is a tropical basis for $\Trop(I)$ then it is also a tropical basis over $\O_N$.  
\end{proposition}

\begin{proof}
A function $\Phi \in \O_N^d$ is in the tropical variety over $\O_N$ as defined in Section \ref{background} if and only if equation \ref{bending} holds for all $f \in I$.  This happens if and only if for all $\rho \in N$, the point $\Phi(\rho)$ satisfies the \ref{bending} for all $f$ over $\overline{\Q}$, which in turn holds if and only if \ref{bending} is satisfied for each element of a tropical basis of $I$.  The latter equivalence implies that $\Trop_{\O_N}(I) = \bigcap_{i =1}^m \Trop_{\O_N}(\langle f_i \rangle )$ for $\{f_1, \ldots, f_m\} \subset I$ a tropical basis. 
\end{proof}

\begin{lemma}\label{iotalinear}
For any linear ideal $L$, $\mathcal{K}(L) = \Trop(L)$ and $\mathcal{K}_{\O_N}(L) = \Trop_{\O_N}(L)$. 
\end{lemma}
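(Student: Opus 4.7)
The plan is to reduce both equalities to the single observation that for a linear ideal $I$, every initial ideal $\In_w(I)$ is again generated by linear forms. Once that is in hand, the graded algebras $\gr_w(A) \cong \k[\bx]/\In_w(I)$ are polynomial rings, every weight quasivaluation $\v_w$ is automatically a valuation, and the first equality follows from Proposition \ref{trop}; the second equality is then immediate from Definition \ref{Onsets}.

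To prove the key technical step, fix $w \in N_\mathcal{B}$ and set $L_w = \In_w(I) \cap \k[\bx]_1$. Flatness of the Gr\"obner degeneration gives $\gr_w(A)$ the same Hilbert function as $A = \k[\bx]/I$, so in particular $\dim L_w = \dim I_1$. The quotient $\k[\bx]/(L_w \cdot \k[\bx])$ is a polynomial ring in $n - \dim I_1$ variables and already realizes this common Hilbert function. The containment $L_w \cdot \k[\bx] \subseteq \In_w(I)$ induces a graded surjection $\k[\bx]/(L_w \cdot \k[\bx]) \twoheadrightarrow \gr_w(A)$ between graded algebras with identical Hilbert functions, hence an isomorphism. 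Thus $\In_w(I) = L_w \cdot \k[\bx]$ is linear and $\gr_w(A)$ is a polynomial ring, in particular an integral domain.

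With $\gr_w(A)$ a domain, $\v_w$ is a valuation: for nonzero $f, g \in A$, the product of their leading classes in $\gr_w(A)$ is nonzero, forcing $\v_w(fg) = \v_w(f) + \v_w(g)$. For $w \in \mathcal{K}(I)$ we have $w = \iota(w) = (\v_w(b_1), \ldots, \v_w(b_d))$, so Proposition \ref{trop} places $w$ in $\Trop(I)$. Combined with the inclusion $\Trop(I) \subseteq \mathcal{K}(I)$ recorded just before Definition \ref{ONKhovanskiibasis}, this gives $\mathcal{K}(I) = \Trop(I)$. The second equality then follows directly from Definition \ref{Onsets}: both $\mathcal{K}_{\O_N}(I)$ and $\Trop_{\O_N}(I)$ are defined as the sets of piecewise-linear maps $N_\Q \to \mathcal{K}(I) = \Trop(I)$.

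The main obstacle is the verification that $\In_w(I)$ is linear; the Hilbert function comparison above seems the cleanest route, though one could alternatively argue geometrically that a Gr\"obner flat family over $\mathbb{A}^1$ with linear generic fiber must have linear special fiber. Everything else is either bookkeeping or a direct appeal to results already established earlier in the paper.
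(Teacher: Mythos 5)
Your proof is correct, but it follows a genuinely different route from the one in the paper. The paper's argument is purely combinatorial: it uses the fact that the circuits of a linear ideal form a tropical basis and derives a contradiction directly from the fixed-point condition $w = \iota(w)$ when the minimum of $w$ over the support of some circuit is attained only once. You instead prove the stronger structural statement that \emph{every} initial ideal $\In_w(I)$ of a linear ideal is again linear --- your Hilbert-function comparison between $\k[\bx]/(L_w)$ and $\gr_w(A)\cong \k[\bx]/\In_w(I)$ is a clean way to see this --- so that every weight quasivaluation $\v_w$ is a valuation, and then Proposition \ref{trop} applied to $w=\iota(w)=(\v_w(b_1),\ldots,\v_w(b_d))$ finishes the first equality; the second equality is, in both treatments, immediate from Definition \ref{Onsets}. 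The trade-off: the paper's proof is shorter and needs only the tropical-basis property of circuits, while yours establishes along the way that linear ideals are well-poised (indeed that every initial ideal, not merely those indexed by points of $\Trop(I)$, is prime), a fact the paper invokes without proof later, e.g.\ in Proposition \ref{bundletropicalpoint}. One cosmetic point: the number of variables in your polynomial-ring count should be $d=|\mathcal{B}|$ rather than $n$, which the paper reserves for $|\Sigma(1)|$.
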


\begin{proof}
It is always the case that $\Trop(L) \subset \mathcal{K}(L)$.  Let $w \in \mathcal{K}(L)$ and $\sum C_i x_i \in L$ be a circuit.  It is known that the circuits of a linear ideal form a tropical basis. Suppose for some $j$, $w_j < w_k$ for all $k \neq j$, $C_k \neq 0$.  We obtain a contradiction: $\v_w(x_j) \geq \max\{ w_k \mid k \neq j, C_k \neq 0\} > w_j = \v_w(x_j)$.  As a consequence $w \in \Trop(L)$. 
\end{proof}

If $(A, \v) \in \Alg_\Sigma$ has a finite Khovanskii basis $\mathcal{B}$ and $\v$ is a valuation, the image of $\Phi: |\Sigma| \to \Trop(I)$ must lie in the locus $\Trop^p(I)$ of points $w \in \GF(I)$ for which $\In_w(I)$ is a prime ideal.  This condition depends on the initial ideal, so $\Trop^p(I)$ is a union of relatively open faces of $\mathcal{K}(I)$ (in fact, if $I$ is positively graded, the closure of an open face in $\Trop^p(I)$ is in $\Trop^p(I)$).   We say that an ideal $I$ is \emph{well-poised} (see \cite{Ilten-Manon}) if $\Trop^p(I) = \Trop(I)$.  

\begin{proposition}\label{bundletropicalpoint}
Let $\mathcal{B} \subset E^*$ and let $L \subset \k[\bx]$ be the corresponding linear ideal.  Valuations $\v: \Sym(E^*) \to \O_\Sigma$ with Khovanskii basis $\mathcal{B}$ are in bijection with points $\Phi \in \Trop_{\O_\Sigma}(L)$.  Moreover, for any $\Phi \in \Trop_{\O_\Sigma}(L)$ there is a refinement $\pi: \hat{\Sigma} \to \Sigma$ such that the corresponding valuation corresponds to a vector bundle $\E$ over $Y(\hat{\Sigma})$. 
\end{proposition}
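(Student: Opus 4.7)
My plan is to establish the bijection first, and then obtain the vector bundle by refining $\Sigma$ to align with the combinatorial structure of $\Trop(I)$.

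For the bijection, given a valuation $\v: \Sym(E^*) \to \O_\Sigma$ with Khovanskii basis $\mathcal{B}$, I would form the tuple $\Phi := (\v(b_1), \ldots, \v(b_d)) \in \O_\Sigma^d$. By Proposition \ref{values}, $\Phi$ lies in $\mathcal{K}_{\hat{\O}_\Sigma}(I)$, and since $\v$ is a valuation, $\Phi \in \Trop_{\hat{\O}_\Sigma}(I)$; because $\v$ takes values in $\O_\Sigma$, in fact $\Phi \in \Trop_{\O_\Sigma}(I)$. In the reverse direction, given $\Phi \in \Trop_{\O_\Sigma}(I)$, I would set $\v := \pi_* \hat{\v}_\Phi$ as in Proposition \ref{Khovanskiipushforward}; pointwise this is the weight quasivaluation $\v_{\Phi(\rho)}$ on $\Sym(E^*)$. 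Because $I$ is linear, Lemma \ref{iotalinear} (which carries over from $\O_N$ to $\O_\Sigma$ since the argument is pointwise) gives $\Phi(\rho) \in \Trop(I)$ for every $\rho$, and the corresponding initial ideal $\In_{\Phi(\rho)}(I)$ is again linear, hence prime. Therefore each $\v_{\Phi(\rho)}$ is an honest valuation, so $\v = \pi_*\hat{\v}_\Phi$ is a valuation valued in $\O_\Sigma$. The two constructions are mutually inverse by the uniqueness asserted in Proposition \ref{values}.

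For the refinement, I would proceed in two stages. Each component $\phi_i = \v(b_i) \in \O_\Sigma$ has finitely many rational domains of linearity inside $|\Sigma|$; a common rational refinement of these domains within $\Sigma$ yields a fan $\hat{\Sigma}_1$ on which every component of $\Phi$ is cone-wise linear. Next, I would pull back the rational polyhedral fan structure on $\Trop(I) = \mathcal{K}(I)$ along $\Phi$ to obtain a further rational refinement $\hat{\Sigma}$ of $\hat{\Sigma}_1$ such that $\Phi(\hat{\sigma})$ lies in a single maximal cone $\tau_\B \subset \Trop(I)$ for every $\hat{\sigma} \in \hat{\Sigma}$. By Remark \ref{rem-apartment}, $\tau_\B$ corresponds to a basis $\B \subset \mathcal{B}$ of $E^*$ whose monomials constitute the standard monomial basis for the associated maximal face of $\Sigma(I)$. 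Thus $\B \subset E^*$ is a Khovanskii basis of $\v\!\!\mid_{\hat{\sigma}}$ of exactly the form required by Theorem \ref{thm-main-bundle}, which then produces a toric vector bundle $\E$ over $Y(\hat{\Sigma})$ whose associated valuation recovers $\v$.

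The main obstacle is ensuring that $\hat{\Sigma}$ is a rational fan refining $\Sigma$, so that $Y(\hat{\Sigma})$ exists and dominates $Y(\Sigma)$. This is handled by observing that each linearity domain of $\phi_i$ is a rational polyhedral cone (because $\phi_i \in \O_\Sigma$) and that $\Trop(I)$ for $I$ defined over $\k$ carries a rational polyhedral fan structure, so the pullback along the piecewise integral linear map $\Phi$ is again a rational fan. The refinement $\hat{\Sigma}$ is not canonical — any sufficiently fine rational refinement meeting the two conditions above works — which is consistent with the existence statement of the proposition.
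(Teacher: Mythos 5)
Your proof is correct and follows essentially the same route as the paper: the bijection comes from classifying (quasi)valuations by their values on the Khovanskii basis (Propositions \ref{values} and \ref{Khovanskiipushforward}) together with $\mathcal{K}_{\O_\Sigma}(I)=\Trop_{\O_\Sigma}(I)$ and the primeness of initial ideals of a linear ideal, while the refinement comes from the finiteness of the linearity domains of the $\phi_i$. The only (cosmetic) difference is that the paper invokes finiteness of the values of $\v$ on $E^*$ and Corollary \ref{finiteequivalence} to produce some fan, whereas you construct the refinement explicitly by intersecting linearity domains with preimages of Gr\"obner cones — which in fact addresses the ``refinement of $\Sigma$'' clause of the statement more directly.
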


\begin{proof}
Valuations $\v: \Sym(E^*) \to \O_\Sigma$ are classified by their values $\Phi \in \mathcal{K}_{\O_\Sigma}(L) = \Trop_{\O_\Sigma}(I)$.  Moreover, any point $\Phi \in \Trop_{\O_\Sigma}(L)$ determines a quasi-valuation $\v_\Phi$ with Khovanskii basis $\mathcal{B}$ which must be a valuation since $L$ is well-poised.  By construction, the restriction of $\v$ to $E^*$ is computed on $f \in E^*$ by taking $\max$ of expressions of the form $\bigoplus_{i =1}^\ell \phi_i$ where $\Phi = (\phi_1, \ldots, \phi_d)$. There are only a finite number of functions of this type, so we conclude that $\v$ takes a finite number of values on $E^*$.  By Corollary \ref{finiteequivalence}, there is a fan $\Sigma$ such that $\mathcal{B}$ contains a linear adapted basis for each restriction $\v\!\!\mid_\sigma$, $\sigma \in \Sigma$.   
 \end{proof}


\begin{remark}\label{rem-tensorsumdiagram}
The direct sum and tensor product operations on toric vector bundles are straightforward to compute with diagrams.  If $\E$ is defined by the ideal $L \subset \k[y_1, \ldots, y_r]$ and diagram $D_1$, and $\F$ is defined by $K \subset \k[w_1, \ldots, w_s]$ and daigram $D_2$, then $\E \oplus \F$ is defined by the ideal $\langle L, K \rangle \subset \k[y_1, \ldots, y_r, w_1, \ldots, w_s]$ and the diagram $[D_1, D_2]$.  The tensor product $\E \otimes \F$ is defined by the ideal in $\k[\ldots, y_i\otimes w_j, \ldots]$ generated by forms $\sum_{i =1}^r C_i[y_i\otimes w_j]$, $\sum_{j = 1}^s K_jy_i \otimes w_j$, where $\sum C_iy_i \in L$ and $\sum K_jw_j \in K$. The the column of the diagram $D_1\odot D_2$ for $\E \otimes \F$ corresponding to $y_j\otimes w_j$ is obtained by adding the $i$-th column of $D_1$ to the $j$-th column of $D_2$.  In particular, if $\F$ is the $T_N$-linearized line bundle corresponding to integers $r_1, \ldots, r_n$, then $D_1\odot D_2$ is computed by adding $r_i$ to every entry in the the $i$-th row of $D_1$. 
\end{remark}

\subsection{Connection with the work of Di Rocco, Jabbusch, and Smith}\label{sec-DJS}

In \cite{DJS}, Di Rocco, Jabbusch, and Smith define a matroid $\mathcal{M}(\E)$ which captures many properties of the toric vector bundle $\E$. We finish this section by relating $\mathcal{M}(\E)$ to the notion of Khovanskii basis. 


\begin{proposition}  \label{matroidproperties}
With notation as above, let $\mathcal{B} \subset E$ be a representation of the matroid $\mathcal{M}(\E)$ as constructed in \cite[Algorithm 3.2]{DJS}, then $\mathcal{B}$ is a Khovanskii basis of $\v$. 
\end{proposition}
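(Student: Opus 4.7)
The plan is to check the Khovanskii basis condition one maximal cone at a time, and then to extract the required cone-adapted bases of $E$ from the hypothesis that $\mathcal{B}$ represents the DJS matroid.

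First, by Definition~\ref{ONKhovanskiibasis}, $\mathcal{B}$ is a Khovanskii basis of $\v$ iff it is a Khovanskii basis of each weight quasivaluation $\v_\rho$ for $\rho \in |\Sigma|$. Using Proposition~\ref{GrobnerAdapted} together with Proposition~\ref{localbasis}, this reduces to the following cone-local statement: for every maximal $\sigma \in \Sigma$, there is a subset $\B_\sigma \subset \mathcal{B}$ which is a vector-space basis of $E$ simultaneously adapted to all the Klyachko filtrations $G^{\rho_i}_\bullet(\v)$, with $\rho_i$ running over the ray generators of $\sigma$. Once this is in place, Examples~\ref{tensorbasis} and \ref{schurbasis} promote $\B_\sigma$ to a linear adapted basis of $\Sym^n(E)$ for $\v\!\mid_\sigma$ in every degree, so the monomials in $\B_\sigma$ give an adapted basis for $\v\!\mid_\sigma$, and $\mathcal{B}$ is therefore a Khovanskii basis of $\v\!\mid_\sigma$.

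Next, I would unpack the DiRocco-Jabbusch-Smith matroid. It is built so that its bases correspond precisely to the Klyachko-compatible bases $\B_\sigma$ of $E$ guaranteed by Theorem~\ref{thm-main-bundle}: the ground-set independence relations encode membership of vectors in each filtration piece $G^{\rho_i}_r(\v)$ for each ray of $\Sigma$, so the matroid's basis indexed by $\sigma$ is by construction a basis of $E$ adapted to every ray filtration of $\sigma$. A representation $\mathcal{B} \subset E$ of this matroid is, by definition of matroid representation, a labeling of the ground set by vectors of $E$ so that linear independence in $E$ matches matroid independence. Consequently, for each maximal cone $\sigma$ the subset of $\mathcal{B}$ corresponding to a DJS-basis of type $\sigma$ is linearly independent and of cardinality $\dim E$, giving a basis $\B_\sigma \subset \mathcal{B}$ of $E$; and, because the matroid independences track filtration containments, this $\B_\sigma$ is automatically adapted to the Klyachko filtrations of $\sigma$.

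Combining these two steps finishes the proof: every $\sigma \in \Sigma$ produces a cone-adapted $\B_\sigma \subset \mathcal{B}$, hence $\mathcal{B}$ is a Khovanskii basis of each $\v\!\mid_\sigma$, hence of $\v$ by Definition~\ref{ONKhovanskiibasis}. The main obstacle is the bookkeeping in the second paragraph: one must carefully verify that a matroid representation does not merely reproduce the correct linear independences, but also pins down adaptedness of each chosen $\B_\sigma$ to every Klyachko filtration at the rays of $\sigma$. This is done by matching each defining independence of the DJS matroid with the corresponding filtration-containment condition in $E$, so that an element of the representation lies in $G^{\rho_i}_r(\v)$ exactly when its matroid label carries that piece of data.
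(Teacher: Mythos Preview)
Your overall strategy matches the paper's: reduce to finding, for each maximal cone $\sigma$, a subset $\B_\sigma \subset \mathcal{B}$ that serves as an adapted basis for $(E,\v\!\mid_\sigma)$, and then promote to $\Sym(E)$. The DJS input is also used correctly in spirit: the compatible basis $\B_\sigma$ is adapted to every Klyachko space $G^{\rho_i}_r(\v)$ at the rays of $\sigma$ (in the paper this comes from the observation in \cite{DJS} that these spaces generate a distributive lattice).

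There is, however, a genuine gap at the point where you invoke Examples~\ref{tensorbasis} and~\ref{schurbasis}. Those examples require $\B_\sigma$ to be a \emph{linear} adapted basis of $(E,\v\!\mid_\sigma)$ in the sense of Definition~\ref{linearadaptedbasis}, i.e.\ each $\v(b)\!\mid_\sigma$ must lie in $M_\sigma$. Being adapted to each ray filtration $G^{\rho_i}_\bullet(\v)$ separately does not by itself give this: a priori $\v(b)\!\mid_\sigma$ is only the pointwise minimum of several linear forms and could be strictly piecewise linear on $\sigma$. The paper fills this gap in two steps. First, Proposition~\ref{arrangementspaces} upgrades ray-adaptedness to adaptedness for every $F_m(\v\!\mid_\sigma)$ (since $F_m=\bigcap_i G^{\rho_i}_{\langle\rho_i,m\rangle}$ and $\B_\sigma$ is compatible with the full distributive lattice), and Proposition~\ref{free} then gives adaptedness for all $G^\rho_r$ with $\rho\in\sigma$. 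Second, linearity of $\v(b)\!\mid_\sigma$ is argued directly: writing $b=\sum C_i b_i'$ in a known linear adapted basis $\B'$ with $\deg(b_i')=\lambda_i$, one uses the $F_m$-adaptedness of $\B_\sigma$ to show $b\in F_{\lambda_j}(\v)$ for some $j$, whence all contributing $\lambda_i\leq\lambda_j$ and $\v(b)\!\mid_\sigma=\lambda_j\in M_\sigma$. Without this argument you cannot apply Examples~\ref{tensorbasis}/\ref{schurbasis}, and the promotion to $\Sym(E)$ is unjustified.

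A secondary point: your description of the DJS matroid is too informal to carry the weight you put on it. You assert that a representation ``pins down adaptedness'' to each Klyachko filtration, but a matroid representation only records linear (in)dependence, not subspace membership. What actually makes this work is the distributive-lattice property of the Klyachko spaces over a single cone, which guarantees a simultaneously compatible basis inside $\mathcal{B}$; you should cite that explicitly rather than inferring it from the abstract notion of matroid representation.
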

\begin{proof}

In \cite[Section 3]{DJS} it is observed that the Klyachko spaces $G^{\rho_i}_r(\v)$ for $\Z_{\geq 0}\rho_i = \varrho_i \cap N, \varrho_i \in \sigma(1)$, generate a distributive lattice under intersection, so there is a ``compatible basis" $\B_\sigma \subset \mathcal{B}$. This means that $\B_\sigma \cap G^{\rho_i}_r(\v)$ is a basis for each Klyachko space. By Proposition \ref{arrangementspaces}, $\B_\sigma \cap F_m(\v\!\!\mid_\sigma)$ is also a basis for $m \in M$ and $\sigma \in \Sigma$, and by Proposition \ref{free},  $\B_\sigma \cap G^{\rho}_r(\v)$ is a basis for all $\rho \in \sigma \cap N$. Take $b \in \B_\sigma$ and write it as a linear combination $\sum_{i =1}^\ell C_i b_i'$ of members of a linear adapted basis $\B'$ of $\v\!\!\mid_\sigma$ with $\deg(b_i') = \lambda_i$.  It follows that $b \in \sum_{i =1}^\ell F_{\lambda_i}(\v)$. The set $\B_i = \B_\sigma \cap F_{\lambda_i}(\v)$ is a basis, and $b \in \bigcup_{i =1}^\ell \B_i$, so for some $j$, $b\in F_{\lambda_j}(\v)$.  As $\B'$ is adapted, it follows that each $b_i' \in F_{\lambda_j}(\v)$ as well, so $\lambda_i \leq \lambda_j$ for each $i$.  As a consequence, $\v(b) = \bigoplus_{i =1}^\ell \lambda_i = \lambda_j$. It follows that $\B_\sigma$ is a linear adapted basis.  
\end{proof}


\section{Strong Khovanskii bases}\label{StrongKhovanskii}

Let $\Sigma$ be a smooth fan with toric variety $Y(\Sigma)$.  The $T_N$-linearized line bundles on $Y(\Sigma)$ correspond to functions $\psi: |\Sigma| \to \Q$ whose restriction to each cone $\sigma \in \Sigma$ is linear and has integer values on lattice points.  In turn, such functions are determined by their values $\psi(\rho_i)$ for $\rho_i \in \varrho_i \in \Sigma(1)$.  Thus, letting $|\Sigma(1)| = n$, we identify the $T_N$-linearized line bundles on $Y(\Sigma)$ with the free group $\Z^n$.   

Let $\A$ be a toric flat sheaf of algebras on $Y(\Sigma)$ corresponding to a valuation $\v: A \to \O_\Sigma$ and let $L$ be a line bundle on $Y(\Sigma)$. The space of maps from $L$ to $\A$ decomposes into a direct sum of the $T_N$-equivariant maps from the $T_N$-linearized line bundles whose underlying line bundle is $L$:

\[\Hom_{Y(\Sigma)}(L, \A) \cong \bigoplus_{\O(\psi) \cong L} \Hom_{\Sh_{Y(\Sigma)}^M}(\O(\psi), \A).\]
By Corollary \ref{cor-invertiblehom}, each space $\Hom_{\Sh_{Y(\Sigma)}^M}(\O(\psi), \A)$ can be identified with the Klyachko space $F_\psi(\v) \subset A$.  The total section ring $\mathcal{R}(A, \hat{\v})$ of $\A$ is defined as the following direct sum:

\[\mathcal{R}(A, \hat{\v}) = \bigoplus_{L \in \Pic(Y(\Sigma))} \Hom_{Y(\Sigma)}(L, \A) \cong \bigoplus_{\psi \in \Z^n} F_\psi(\v).\]
Let $S_\Sigma$ be the Cox ring of $Y(\Sigma)$; recall that $S_\Sigma$ is isomorphic to the polynomial ring $\k[X_\varrho \mid \varrho \in \Sigma(1)]$.   The monomial in $S_\Sigma$ corresponding to $\phi \in \Z_{\leq 0}^n$ acts on $\mathcal{R}(\A, \hat{\v})$ by shifting a graded component $F_\psi(\v)$ into $F_{\psi + \phi}(\v)$.   By Proposition \ref{thm-adjoint}, the total section ring defines a quasivaluation $\hat{\v}: A \to \O_{C_\Sigma}$, where $C_\Sigma = \Q_{\leq 0}^n$; whence the notation $\mathcal{R}(A, \hat{\v})$. 

When $\A = \Sym(\E)$ for a toric vector bundle $\E$, the total section ring (denoted $\mathcal{R}(\E)$) is isomorphic to the Cox ring of the projectivized vector bundle $\mathbb{P}\E$ of rank $1$ quotients of $\E$, see \cite{GHPS}.  The main result of this section is Theorem \ref{thm-main-StrongKhovanskii} which characterizes the finite generation of $\mathcal{R}(A, \hat{\v})$ in terms of the map $\Phi_\v$, or equivalently the diagram $D(\Phi_\v)$.

\subsection{Total section rings as Rees algebras}

Corollary \ref{cor-main-family} implies that the total section ring $\mathcal{R}(A, \hat{\v})$ is determined by the valuations $\v_{\rho_i}: A \to \overline{\Z}$, or equivalently the rows of the diagram $D(\Phi_\v)$.  With this in mind, we broaden our perspective and consider finite generation properties of general \emph{multi-Rees algebras}. 

\begin{definition}\label{def-multiRees}
Let $A$ be an algebra of finite type over $\k$, and let $\v_i: A \to \overline{\Z}$, $1 \leq i \leq n$ be quasivaluations with a common Khovanskii basis $\mathcal{B}$.  For $\br = (r_1, \ldots, r_n) \in \Z^n$, let $F_\br = \bigcap_{i = 1}^n G_{r_i}(\v_i)$.  The multi-Rees algebra is the direct sum:

\[\mathcal{R}(A, \v_1, \ldots, \v_n) = \bigoplus_{\br \in \Z^n} F_\br.\]
\end{definition}

For a toric flat algebra $\A$, the total section ring $\mathcal{R}(A, \hat{\v})$ is the multi-Rees algebra defined by $\v_i = \v_{\rho_i}$.  Moreover, if we let $\Sigma(n) \subset \Q^n$ be the fan composed only of the rays $\Q_{\geq 0}{\bf e}_i$ for $1 \leq i \leq n$, any multi-Rees algebra $\mathcal{R}(A, \v_1, \ldots, \v_n)$ can be realized as a total section ring by defining $\Phi: \Sigma(n) \to \Trop(I_\mathcal{B})$ to be the map which sends ${\bf e}_i$ to the weight of $\v_i$ in $\Trop(I_\mathcal{B})$.  Total section rings for families over $Y(\Sigma)$ are then those multi-Rees algebras whose valuations $\v_i$ satisfy the  apartment compatibility conditions of Remark \ref{rem-simplicialdiagrams}.   From this perspective, Cox rings $\mathcal{R}(\E)$ of projectivized toric vector bundles correspond to those multi-Rees algebras where $A$ is a polynomial ring, and the $\v_i$ are weight valuations emerging from points on a tropicalized linear space.   

The next lemma determines properties of multi-Rees algebras, and sharpens the description of a total section algebra. For $\v: A \to \O_\Sigma$, $j \leq n$ and $\br \in \Z^j$, let $F^j_\br(\v) = \bigcap_{1 \leq i \leq j} G_{r_i}(\v_i)$.  We define the partial multi-Rees algebras $R_j = \bigoplus_{\br \in \Z^j} F^j_\br(\v)$, and let $S_j = \k[X_1, \ldots, X_j]$,  and $T_j$ be the Laurent polynomial ring $\k[t_1^\pm, \ldots t_j^\pm]$.  Let $\m_j = \langle X_1 -1, \ldots, X_j -1\rangle \subset S_j$. 

\begin{lemma}\label{lem-iteratedRees}
For each $j$, $R_j$ is an algebra over $S_j$, and is naturally identified with a subalgebra of $A \otimes_\k T_j$.  For each $j$, $\v_{\rho_{j+1}}$ defines a $\Z^j$-homogeneous quasivaluation on $R_j$, and the Rees algebra of this valuation is $R_{j+1}$.  In particular, $R_j \cong R_{j+1}/\langle X_{j+1} - 1\rangle$, $gr_{\v_{\rho_{j+1}}}(R_j) \cong R_{j+1}/ \langle X_{j+1} \rangle$, and $\mathcal{R}(A, \hat{\v}) = R_n$. Moreover, $\mathcal{R}(A, \hat{\v})$ is eversive, and if $w \in C_\sigma \subset C_\Sigma$ for $\sigma \in \Sigma$, then $\hat{\v}_w = \v_w$.  
\end{lemma}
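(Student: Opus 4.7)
The strategy is induction on $j$, reducing all of the asserted identities to the single-step relationship between $R_j$ and $R_{j+1}$. To begin, I would identify $R_j$ with a graded subalgebra of $A \otimes_\k T_j$ by sending $f \in F_\br^j$ to $f \otimes t^\br := f \otimes t_1^{r_1}\cdots t_j^{r_j}$. The subalgebra property follows from $F_\br^j \cdot F_\bs^j \subseteq F_{\br+\bs}^j$, which is a direct consequence of the quasivaluation inequality $\v_i(fg) \geq \v_i(f) + \v_i(g)$ applied for each $i \leq j$. The $S_j$-algebra action comes from the embedding $S_j \hookrightarrow T_j$ via $X_i \mapsto t_i^{-1}$; well-definedness on $R_j$ is immediate from the descending filtration $G_{r_i}(\v_i) \subseteq G_{r_i-1}(\v_i)$, which ensures that multiplication by $X_i$ shifts the degree-$\br$ piece into the degree-$(\br - \e_i)$ piece.

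Next I would recognize $\v_{\rho_{j+1}}$ as a $\Z^j$-homogeneous quasivaluation on $R_j$ by defining it on each homogeneous piece $F_\br^j \otimes t^\br$ as $\v_{\rho_{j+1}}$ applied to the coefficient in $A$; the quasivaluation axioms transfer from $A$ directly. The filtration $G_r(\v_{\rho_{j+1}})$ intersected with $F_\br^j \otimes t^\br$ is by definition $F_{(\br,r)}^{j+1} \otimes t^\br$, and summing over $r$ and $\br$ identifies the Rees algebra of this quasivaluation with $R_{j+1}$. The quotient formulas $R_{j+1}/\langle X_{j+1}-1\rangle \cong R_j$ and $R_{j+1}/\langle X_{j+1}\rangle \cong \gr_{\v_{\rho_{j+1}}}(R_j)$ then follow by setting $t_{j+1}^{-1} = 1$ and $t_{j+1}^{-1} = 0$ respectively in the $\Z$-graded decomposition, which are standard Rees-algebra facts. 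Setting $R_0 = A$ and iterating $n$ times produces $R_n$, which matches $\mathcal{R}(A,\hat{\v})$ in each multi-degree $\psi \in \Z^n$ via the identification of $F_\psi(\v)$ with $\bigcap_{\varrho_i \in \Sigma(1)} G_{\psi(\rho_i)}(\v_{\rho_i})$ coming from Proposition \ref{arrangementspaces}.

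For the eversiveness statement, I would invoke Lemma \ref{locallyeversive} after observing that for $\br$ in any face $C_\sigma \subseteq C_\Sigma$, the graded piece $F_\br^n$ is already cut out by the finitely many Klyachko spaces $G_{r_i}(\v_{\rho_i})$ associated to rays $\varrho_i \in \sigma(1)$; once more this is Proposition \ref{arrangementspaces}. Finally, for $w \in C_\sigma$, since $\sigma$ is simplicial (as $\Sigma$ is smooth) we may write $w = \sum_{\varrho_i \in \sigma(1)} \lambda_i \rho_i$ uniquely with $\lambda_i \geq 0$, so $\hat{\v}$ is linear on $C_\sigma$ and $\hat{\v}_w(b) = \sum_i \lambda_i \v_{\rho_i}(b)$ on each $b \in \mathcal{B}$. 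This agrees with the weight quasivaluation $\v_w(b)$ on the common Khovanskii basis $\mathcal{B}$, and since both $\hat{\v}_w$ and $\v_w$ have Khovanskii basis $\mathcal{B}$, Proposition \ref{GrobnerAdapted} forces $\hat{\v}_w = \v_w$ on all of $A$. The main technical obstacle I anticipate is the eversiveness step: verifying that the intersection-of-Klyachko-spaces description on each $C_\sigma$ glues consistently across all cones of $C_\Sigma$ so that the $\Z^n$-graded module is genuinely eversive, rather than merely eversive when restricted to each face.
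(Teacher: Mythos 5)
The iterated Rees portion of your argument (the embedding $R_j \subseteq A\otimes_\k T_j$, the identification of $R_{j+1}$ as the Rees algebra of the $\Z^j$-homogeneous quasivaluation $\v_{\rho_{j+1}}$ on $R_j$, and the two quotient formulas) is correct and is essentially how the paper proceeds. The trouble is in the last two claims. For eversiveness you propose to invoke Lemma \ref{locallyeversive}, but that lemma takes eversiveness as a \emph{hypothesis} and only concludes that localizations to faces remain eversive; it cannot be used to establish eversiveness of $\mathcal{R}(A,\hat{\v})$ in the first place. What has to be shown is that the graded piece $F_\psi(\v)$ of the module coincides with $\bigcap_{w\in C_\Sigma} G^{w}_{\sum w_i\psi(\rho_i)}(\hat{\v})$, i.e.\ the intersection over \emph{all} points of the cone $C_\Sigma$, not just its rays. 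The paper does this by a sandwich: from the definition of $\mathcal{L}$ one computes $G^w_r(\hat{\v}) = \sum_{\sum w_i\psi(\rho_i)\geq r}F_\psi(\v)$, which yields $\hat{\v}_{\hat{\rho}_i}=\v_{\rho_i}$ and hence $\bigcap_{\tilde\rho\in C_\Sigma(1)}G^{\tilde\rho}_{\psi(\rho)}(\hat{\v}) = \bigcap_{\rho\in\Sigma(1)}G^{\rho}_{\psi(\rho)}(\v)=F_\psi(\v)$ by Proposition \ref{arrangementspaces}; since $F_\psi(\v)\subseteq \bigcap_{w\in C_\Sigma}G^{w}(\hat{\v})\subseteq\bigcap_{\tilde\rho\in C_\Sigma(1)}G^{\tilde\rho}(\hat{\v})$, all three spaces agree. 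Your observation that $F_\psi(\v)$ is cut out by the ray filtrations is the right ingredient, but it must be fed into this chain of inclusions, not into Lemma \ref{locallyeversive}.

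Similarly, for $\hat{\v}_w=\v_w$ you assert without proof that $\hat{\v}$ is linear on $C_\sigma$ and that $\hat{\v}_w(b)=\sum_i\lambda_i\v_{\rho_i}(b)$; but these are essentially the statements to be proved, since a priori $\hat{\v}$ is only accessible through $\mathcal{L}$ applied to the module $\mathcal{R}(A,\hat{\v})$. The paper instead obtains $G^w_r(\hat{\v})=\sum_{\sum w_i\psi(\rho_i)\geq r}F_\psi(\v)=G^{\pi(w)}_r(\v)$ directly, the second equality being the concavity statement of Proposition \ref{flatsum} for the flat sheaf $\A$; this identifies $\hat{\v}_w$ with $\v_{\pi(w)}$ on all of $A$ at once, with no appeal to Khovanskii bases. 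Your Gr\"obner-theoretic detour through Proposition \ref{GrobnerAdapted} is therefore both unnecessary and, as written, circular: in the generality of the lemma one does not know that $\hat{\v}_w$ admits $\mathcal{B}$ as a Khovanskii basis, or takes the claimed values on $\mathcal{B}$, before carrying out the very computation that proves $\hat{\v}_w=\v_w$.
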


\begin{proof}
The algebra $R_1$ is clearly a Rees algebra of $A$, and a subalgebra of $A\otimes_\k \k[t_1^{\pm}]$, where we view $X_1$ as $t_1^{-1}$.  If this statement holds for $j$, then we use $R_j \subset A \otimes_\k T_j$ and the fact that $\v_{\rho_{j+1}}$ defines a $\Z^j$-homogeneous quasivaluation on $A \otimes_\k T_j$.  It is then straightforward to check that the $r_{j+1}-st$ filtration level of $F^j_{r_1, \ldots, r_j}(\v)$ with respect to this quasivaluation is $F_{r_1, \ldots, r_j, r_{j+1}}^{j+1}(\v)$.

It is straightforward to check that the graded components of $R_j/\langle X_j-1\rangle$ are the spaces $\bigcap_{i =1}^{j-1} G^{\rho_i}_{r_i}$. Consequently, $R_j/\langle X_j -1\rangle \cong R_{j-1}$, and the algebra $\mathcal{R}(A, \hat{\v})/\m_n \mathcal{R}(A, \hat{\v})$ is isomorphic to $R_1/ \langle X_1 -1 \rangle \cong A$.  It follows that $\mathcal{L}(\mathcal{R}(A, \hat{\v})) = (A, \hat{\v})$ for some quasivaluation $\hat{\v}: A \to \O_{C_\Sigma}$. Moreover, if $w\in C_\sigma \subset C_\Sigma$, where $w = \sum_{\rho_i \in \sigma(1)} w_i \hat{\rho}_i$ for ray generators $\hat{\rho}_i \in C_\sigma$, we have:

\[G^w_r(\hat{\v}) = \sum_{\sum_{\rho_i \in \sigma(1)} w_i\psi(\rho_i) \geq r } F_\psi(\v) = G^{\pi(w)}_\mathcal{R}(\v).\]
In particular, this implies that $\hat{\v}_{\hat{\rho}_i} = \v_{\rho_i}$ as quasivaluations on $A$. Now, by construction, the quasivaluation $\hat{\v}$ is concave with respect to the rays of $C_\Sigma$, so we have:

\[\bigcap_{\tilde{\rho} \in C_ \Sigma(1)} G^{\tilde{\rho}}_{\psi(\rho)}(\hat{\v}) = \bigcap_{\rho \in \Sigma(1)} G^\rho_{\psi(\rho)}(\v)  = F_\psi(\v) \subset  \bigcap_{w \in C_\Sigma} G^{w}_{\sum w_i\psi(\rho_i)}(\hat{\v}) \subset \bigcap_{\tilde{\rho} \in C_ \Sigma(1)} G^{\tilde{\rho}}_{\psi(\rho)}(\hat{\v}).\]
The middle term $F_\psi(\v)$ is a graded component of $\mathcal{R}(A, \hat{\v})$, and the inclusion into the space  $\bigcap_{w \in C_\Sigma} G^{w}_{\sum w_i\psi(\rho_i)}(\hat{\v})$ is a consequence of the definition of the functor $\L$.  The equality on the left, and the inclusion on the right then shows that $\mathcal{R}(A, \hat{\v})$ is eversive. 
\end{proof}

\subsection{Strong Khovanskii bases and proof of Theorem \ref{thm-main-StrongKhovanskii}}

By Proposition \ref{finitetype} and Lemma \ref{lem-iteratedRees} above, if $\mathcal{R}(A, \hat{\v})$ is finitely generated as a $\k$-algebra, then the image $\mathcal{B} \subset A$ of this set of generators is a finite Khovanskii basis for both $\hat{\v}$ and $\v$.  

\begin{definition}
We say a set $\mathcal{B} \subset A$ is a strong Khovanskii basis for $\v: A \to \O_\Sigma$ if it is the image of a generating set of $\mathcal{R}(A, \hat{\v})$.  
\end{definition}

The following proposition gives the general picture of what is going on when there is a strong Khovanskii basis.  It has also been discovered independently by N\o dland, \cite{Nodland} in the case of toric vector bundles. 

\begin{proposition}\label{prop-StrongKhovanskiispan}
Let $(A, \v)$ be as above, then $\mathcal{B} \subset A$ is a strong Khovanskii basis if and only if each space $F_\psi(\v)$ is spanned by monomials in $\mathcal{B}$. 
\end{proposition}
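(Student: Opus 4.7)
My plan is to exploit the embedding of $\mathcal{R}(A, \hat{\v})$ as a $\Z^n$-graded subalgebra of $A \otimes_\k T_n$ given by Lemma \ref{lem-iteratedRees}, under which $X_i$ is identified with $1 \otimes t_i^{-1}$ and each $b_j \in \mathcal{B}$ has a natural lift $\tilde{b}_j := b_j \otimes t^{\hat{\v}(b_j)}$, where $\hat{\v}(b_j) \in \Z^n$ denotes the tuple of weight valuations $(\v_{\rho_1}(b_j), \ldots, \v_{\rho_n}(b_j))$. The defining map in Theorem \ref{thm-main-StrongKhovanskii} is exactly the algebra homomorphism sending $X_i \mapsto 1 \otimes t_i^{-1}$ and $Y_j \mapsto \tilde{b}_j$, so $\mathcal{B}$ is a strong Khovanskii basis precisely when $\mathcal{R}(A, \hat{\v})$ is generated as a $\k$-algebra by $\{X_1, \ldots, X_n, \tilde{b}_1, \ldots, \tilde{b}_m\}$. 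Since the degree-$\psi$ piece of $\mathcal{R}(A, \hat{\v})$ is $F_\psi(\v) \otimes t^\psi$, the whole task is to connect degree-$\psi$ monomials in these generators with monomials in $\mathcal{B}$ spanning $F_\psi(\v)$.

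For the forward direction, I take $f \in F_\psi(\v)$ and expand $f \otimes t^\psi$ in the generators, keeping only summands of total degree $\psi$. Each such summand has the form $c \tilde{b}^\alpha X^\gamma = c\, b^\alpha \otimes t^{\sum_j \alpha_j \hat{\v}(b_j) - \gamma}$ with $\gamma \in \Z^n_{\geq 0}$ and $\sum_j \alpha_j \hat{\v}(b_j) - \gamma = \psi$; in particular $\sum_j \alpha_j \v_{\rho_i}(b_j) \geq \psi(\rho_i)$ for every ray $\rho_i$. Subadditivity of each $\v_{\rho_i}$ then gives $\v_{\rho_i}(b^\alpha) \geq \psi(\rho_i)$, so every contributing monomial $b^\alpha$ lies in $F_\psi(\v)$; evaluating at $t_i = 1$ produces the required monomial expression for $f$.

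The reverse direction should proceed by lifting a monomial expansion $f = \sum c_\alpha b^\alpha$ (with each $b^\alpha \in F_\psi(\v)$) back to $\mathcal{R}(A, \hat{\v})$ via the identity $b^\alpha \otimes t^\psi = \tilde{b}^\alpha \cdot X^{\gamma_\alpha}$, where $\gamma_\alpha := \sum_j \alpha_j \hat{\v}(b_j) - \psi$. This succeeds precisely when $\gamma_\alpha \in \Z^n_{\geq 0}$, i.e.\ when $\sum_j \alpha_j \v_{\rho_i}(b_j) \geq \psi(\rho_i)$ for all rays. The main obstacle is that the hypothesis $b^\alpha \in F_\psi(\v)$ only guarantees $\v_{\rho_i}(b^\alpha) \geq \psi(\rho_i)$, while subadditivity gives the opposite-direction bound $\v_{\rho_i}(b^\alpha) \geq \sum_j \alpha_j \v_{\rho_i}(b_j)$, so an ``accidental'' monomial (where subadditivity is strict on some $\v_{\rho_i}$) might have its natural multidegree fail to dominate $\psi$. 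I expect to close this gap by using the fact that $\mathcal{B}$ is automatically a Khovanskii basis for each $\v_{\rho_i}$ (Proposition \ref{finitetype} applied to $\mathcal{R}(A, \hat{\v})$): any such accidental monomial can be rewritten modulo higher-weight terms via the leading-term relations among the $\bar{b}_j \in \gr_{\v_{\rho_i}}(A)$, and a Noetherian induction on the positive grading of $A$ allows one to perform these rewritings simultaneously for all rays until every monomial in the expansion of $f$ has natural multidegree $\sum_j \alpha_j \hat{\v}(b_j)$ dominating $\psi$ pointwise. The lift then expresses $f \otimes t^\psi$ as a polynomial in $\{X_i, \tilde{b}_j\}$, and varying $\psi$ shows that these elements generate $\mathcal{R}(A, \hat{\v})$ as a $\k$-algebra.
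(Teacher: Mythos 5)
Your forward direction is correct and is essentially the paper's (the paper dispatches it in one sentence); note that your computation actually proves something slightly stronger than the bare statement: the spanning monomials $b^\alpha$ you extract satisfy $\sum_j \alpha_j \hat{\v}(b_j) \geq \psi$ componentwise, not merely $\v(b^\alpha) \geq \psi$.

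The reverse direction has a genuine gap, and it sits exactly where you flagged it. Your proposed repair --- rewriting an ``accidental'' monomial modulo higher-weight terms using the leading-term relations in each $\gr_{\v_{\rho_i}}(A)$, and then performing ``these rewritings simultaneously for all rays'' by Noetherian induction --- is precisely the step that fails in general: the rewritings attached to different rays need not be compatible, and if they always were, every Khovanskii basis would be strong, contradicting Example \ref{ex-Khovanskiiextend}. Your sketch also never actually invokes the hypothesis that $F_\psi(\v)$ is spanned by monomials, so nothing in it distinguishes the good case from the bad one. The paper sidesteps the issue by reading the hypothesis in the stronger form your forward direction produces: $F_\psi(\v)$ is spanned by monomials $b^\alpha$ whose \emph{formal} multidegree $\w(\bx^\alpha) = \sum_j \alpha_j \v(b_j)$ dominates $\psi$, i.e.\ the map $\pi_\psi: F_\psi(\w) \to F_\psi(\v)$ induced by the presentation $\pi: \k[\bx] \to A$ is surjective (this is also the form in which the proposition is applied in Proposition \ref{prop-singleGrobnercone}). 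Under that reading your lift $b^\alpha \otimes t^\psi = \tilde{b}^\alpha X^{\gamma_\alpha}$ is immediate since $\gamma_\alpha \geq 0$ by hypothesis, and the only remaining point --- which your write-up omits --- is that $\mathcal{R}(\k[\bx], \hat{\w})$ is a saturated affine semigroup algebra generated by the $\tilde{x}_j$ and the $t_i^{-1}$, so that surjectivity of all the $\pi_\psi$ transports this generating set to $\mathcal{R}(A, \hat{\v})$. If you insist on the weaker reading (spanned by monomials merely lying in $F_\psi(\v)$), the implication is not established by your argument and would require a genuinely new idea, if it is true at all.
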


\begin{proof}
Any strong Khovanskii basis must have the stated property.  For sufficiency, we note that $\mathcal{B}$ will be a Khovanskii basis for $\v$, indeed, $\mathcal{B}$ monomials span each $G^\rho_r(\v)$ space.  If we let $\pi: \k[\bx] \to A \to 0$ be the presentation by $\mathcal{B}$, it follows that $\v = \pi_*\w$, where $\w(x_i) = \v(b_i)$, $b_i \in \mathcal{B}$.  We get also get an induced map $\pi_\psi: F_\psi(\w) \to F_\psi(\v)$ for each $\psi \in \Div(\Sigma)$.  By assumption, each $\pi_\psi$ is surjective.   The algebra $\mathcal{R}(\k[\bx], \hat{\w}) \subset \k[\bx][t_1^\pm, \ldots, t_n^\pm]$ has a basis of those monomials in $\bx, t_1, \ldots, t_n$ which satisfy certain integral inequalities coming from the rows of $D(\Phi_\v)$. It follows that $\mathcal{R}(\k[\bx], \hat{\w})$ is a saturated affine semigroup algebra, and is therefore finitely generated.  
\end{proof}

Unfortunately, it is not enough to show that $\hat{\v}: A \to \O_{C_\Sigma}$ has a finite Khovanskii basis.  Recall that this means that the associated graded $gr_\rho(A)$ is finitely generated for all $\rho \in C_\Sigma \cap \Z^n$.  Finite generation of the toric fixed point fiber $\mathcal{R}(A, \hat{\v})/\m_{C_\Sigma} \mathcal{R}(A, \hat{\v})$ is the obstruction to finite generation of $\mathcal{R}(A, \hat{\v})$, however the maps $\hat{\phi}_\rho: \mathcal{R}(A, \hat{\v})/\m_{C_\Sigma} \mathcal{R}(A, \hat{\v}) \to \gr_\rho(A)$ from Proposition \ref{finitetype} are only surjections in general. It could be the case that the associated graded algebras $\gr_\rho(A)$ are finitely generated when $\mathcal{R}(A, \hat{\v})$ is not.  In particular, $\mathcal{R}(A, \hat{\v})$ may not be flat over $S_\Sigma$.  Moreover, if $\mathcal{R}(A, \hat{\v})$ is flat as an $S_\Sigma$ module and $A$ is positively graded then $\mathcal{R}(A, \hat{\v})$ must be free.  In this case, $\v$ has a global adapted basis, and $\A$ splits as a direct sum of line bundles.  This special circumstance holds in the cluster algebra example in Section \ref{clusterexample}.  The next Proposition shows that we can guarantee this circumstance holds if we assume that the image of $\Phi$ is ``small."

\begin{proposition}\label{prop-singleGrobnercone}
Let $\v: A \to \O_\Sigma$ have a finite Khovanskii basis $\mathcal{B} \subset A$, and suppose that the image of the induced map $\Phi: \Sigma \to \mathcal{K}(I)$ lies in a single apartment, then the associated sheaf $\mathcal{A}$ splits as a direct sum of line bundles, and $\mathcal{B}$ is a strong Khovanskii basis. 
\end{proposition}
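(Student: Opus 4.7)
The plan is to show that the single-apartment hypothesis forces the standard monomial basis $\B_\tau \subset A$ associated to the maximal cone $\tau \in \Sigma(I)$ to serve as a global linear adapted basis for $\v$, and then to read off both conclusions from the equivalence developed in Section \ref{freesection}. By assumption there is a single apartment $A_\tau = \mathcal{K}(I) \cap \tau$ with $\Phi(|\Sigma|) \subseteq A_\tau$. Since $\mathcal{B}$ is a Khovanskii basis of $\v_\rho$ for each $\rho \in |\Sigma|$ and $\Phi(\rho) \in \bar{\tau}$, Proposition \ref{GrobnerAdapted} tells me that $\B_\tau$ is an adapted basis of each weight valuation $\v_\rho$, and moreover $\v_\rho(b^\alpha) = \langle \Phi(\rho), \alpha\rangle$ for every $b^\alpha \in \B_\tau$.

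Because $\Phi$ is $\Sigma$-adapted, $\Phi\!\!\mid_\sigma$ is linear on each $\sigma \in \Sigma$, so $\v(b^\alpha): |\Sigma| \to \Z$ is a PL function in $\O_\Sigma$ whose restriction to each cone of $\Sigma$ lies in $M_\sigma$. Thus $\B_\tau \subset A$ is a \emph{linear} adapted basis for every $(A, \v\!\!\mid_\sigma) \in \Vect_\sigma$, and the local identifications $(A, \v\!\!\mid_\sigma) \cong \bigoplus_{b^\alpha \in \B_\tau} (\k, \v(b^\alpha)\!\!\mid_\sigma)$ agree on intersections because they are induced by the same basis of $A$.

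Applying $\mathcal{R}$ locally and using Proposition \ref{free}, we obtain $\Gamma(Y(\sigma), \mathcal{A}) \cong \bigoplus_{b^\alpha \in \B_\tau} S_\sigma[\v(b^\alpha)\!\!\mid_\sigma]$ as $M$-graded free $S_\sigma$-modules for each $\sigma \in \Sigma$. Since $\mathcal{A}$ is eversive (its sections are locally free), Theorem \ref{eversiveglobalequivalence} lets us glue these local free descriptions to identify $\mathcal{A} \cong \bigoplus_{b^\alpha \in \B_\tau} \O(\v(b^\alpha))$, a direct sum of line bundles on $Y(\Sigma)$. This establishes the first conclusion.

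For the second, I invoke Proposition \ref{prop-StrongKhovanskiispan}: it suffices to show every Klyachko space $F_\psi(\v)$ is spanned by monomials in $\mathcal{B}$. Since $\B_\tau$ is adapted to $\v$ and $\B_\tau$ consists of monomials in $\mathcal{B}$, any $f = \sum_\alpha C_\alpha b^\alpha$ expanded in $\B_\tau$ satisfies $\v(f) = \bigoplus_{C_\alpha \neq 0} \v(b^\alpha)$, so $\v(f) \geq \psi$ if and only if $\v(b^\alpha) \geq \psi$ whenever $C_\alpha \neq 0$. Hence $\{ b^\alpha \in \B_\tau \mid \v(b^\alpha) \geq \psi\}$ is a basis of $F_\psi(\v)$, and $\mathcal{B}$ is a strong Khovanskii basis. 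There is no real obstacle here: the single-apartment hypothesis is doing all the work by producing a uniform adapted basis across the whole fan, and the genuine difficulty of characterizing strong Khovanskii bases in general (handled by Theorem \ref{thm-main-StrongKhovanskii}) is precisely the failure of such a uniform basis to exist when $\Phi$ spreads across multiple apartments.
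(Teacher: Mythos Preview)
Your proof is correct and follows essentially the same approach as the paper: both arguments hinge on recognizing that the single-apartment hypothesis makes the standard monomial basis $\B_\tau$ a global linear adapted basis for $\v$, from which the line-bundle splitting and the monomial spanning of each $F_\psi(\v)$ (hence Proposition~\ref{prop-StrongKhovanskiispan}) follow. The paper frames the second conclusion via the pushforward $\w$ on $\k[\bx]$ and the surjectivity of $\pi_\psi: F_\psi(\w) \to F_\psi(\v)$, while you argue directly from the adapted-basis identity $\v(f) = \bigoplus_{C_\alpha \neq 0} \v(b^\alpha)$; these are the same content, and your version is arguably cleaner in that it avoids introducing $\w$ only to bypass it.
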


\begin{proof}
By assumption, $\v$ is the pushforward of a valuation $\w: \k[\bx] \to \O_\Sigma$ determined by $\w(\bx^\alpha) = \sum \alpha_i \phi_i$, where $\phi_i = \v(b_i)$ for $b_i \in \mathcal{B}$.   For each $\psi \in \Div(\Sigma)$ there is a map $\pi_\psi: F_\psi(\w) \to F_\psi(\v)$.  The space $F_\psi(\w)$ has as a basis those monomials $\bx^\alpha$ such that $\w(\bx^\alpha) \geq \psi$.   Let $\mathbb{B} \subset A$ be the image of the set of standard monomials for the face $\tau$, then $\mathcal{B}$ is a global adapted basis of $\v$, so $\mathcal{A}$ splits as a sum of line bundles.  Moreover, for each $\psi$ we must have $F_\psi(\v) \cap \mathbb{B} \subset F_\psi(\v)$ is a basis.  It follows that $\pi_\psi$ is always surjective, so we may use Proposition \ref{prop-StrongKhovanskiispan}. 
\end{proof}

Now we show that it is always possible to construct a strong Khovanskii basis if one exists by adapting \cite[Algorithm 2.18]{Kaveh-Manon-NOK}.  Let $\v: A \to \overline{\Z}$ be a quasivaluation with associated graded algebra $\gr_\v(A)$.  Let $\mathcal{B} = \{b_1, \ldots, b_d\} \subset A$ be a generating set with ideal $I_\mathcal{B}$, and let $J_\mathcal{B}$ be the ideal of polynomials which vanish on the image $\overline{\mathcal{B}} \subset \gr_\v(A)$.  We have $\In_\v(I_\mathcal{B}) \subset J_\mathcal{B}$, and these ideals are equal if and only if $\mathcal{B}$ is a Khovanskii basis of $\v$ \cite[Proposition 2.17]{Kaveh-Manon-NOK}.  Let $J_\mathcal{B} = \langle G \rangle \subset \k[\bx]$.  

\begin{enumerate}
\item For an element $g \in G$, compute $h = g(b_1, \ldots, b_d) \in A$ and $\overline{h} \in \gr_\v(A)$.
\item Check if $\overline{h}$ can be written as a polynomial in $\overline{\mathcal{B}} \subset \gr_\v(A)$. 
\item If $\overline{h}$ cannot be written as a polynomial in $\overline{\mathcal{B}}$ then add $h$ to $\mathcal{B}$. 
\item Repeat steps $(1) - (3)$ for each element of $G$. 
\item If $\mathcal{B}$ is unchanged then $J_\mathcal{B} = \In_\v(I_\mathcal{B})$ so return $\mathcal{B}$, otherwise go back to step $(1)$. 
\end{enumerate}

The Rees algebra $\mathcal{R}(A, \v) = \bigoplus_{r \in \Z} F_r(\v)$ is naturally a subalgebra of $A \otimes_\k \k[t^\pm]$.  A generating set $\overline{\mathcal{B}} \subset \gr_\v(A)$ then lifts to a $\k[t^{-1}]$-algebra generating set $\hat{\mathcal{B}} \subset \mathcal{R}(A, \v)$, where $\hat{b}_i = b_i \otimes t^{\v(b_i)}$.  In particular $\mathcal{R}(A, \v)$ is generated by $\hat{\mathcal{B}} \cup \{t^{-1}\}$ as a $\k$-algebra.  As a consequence, one knows that $R(A, \v)$ has a finite generating set if and only if $(A, \v)$ has a finite Khovanskii basis.  We use this perspective to prove finiteness properties for $\mathcal{R}(A, \hat{\v})$.

\begin{algorithm}\label{alg-StrongKhovanskii}
Let $R_0 = A$, and $\mathcal{B}_0 \subset R_0$ be a finite generating set. 

\begin{enumerate}
\item Starting with a generating set $\mathcal{B}_j \subset R_j$, use \cite[Algorithm 2.18]{Kaveh-Manon-NOK} to construct a Khovanskii basis $\mathcal{B}_j'$ of $\v_{\rho_{j+1}}$. 
\item Set $\mathcal{B}_{j+1} = \hat{\mathcal{B}_j'} \cup \{t_{j+1}^{-1}\} \subset R_{j+1} \subset A \otimes_\k T_{j+1}$, and go to $(1)$.  
\end{enumerate}

\end{algorithm}

\begin{proposition}\label{prop-StrongKhovanskii}
A pair $(A, \v)$ has a strong Khovanskii basis if and only if \ref{alg-StrongKhovanskii} stops. 
\end{proposition}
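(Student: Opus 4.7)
The plan is to leverage Lemma \ref{lem-iteratedRees}, which identifies $R_{j+1}$ with the Rees algebra of the quasivaluation $\v_{\rho_{j+1}}$ on $R_j$, together with the standard characterization of terminating Khovanskii basis procedures: \cite[Algorithm 2.18]{Kaveh-Manon-NOK} terminates on $(R_j, \v_{\rho_{j+1}})$ with input $\mathcal{B}_j$ if and only if the associated graded algebra $\gr_{\v_{\rho_{j+1}}}(R_j)$ is finitely generated, in which case the output $\mathcal{B}_j'$ is a finite Khovanskii basis. I will also use the remark preceding the algorithm: for a $\Z$-valued quasivaluation, any Khovanskii basis $\mathcal{B}_j'$ of $\v_{\rho_{j+1}}$ lifts via $b \mapsto b \otimes t_{j+1}^{\v_{\rho_{j+1}}(b)}$ to a $\k[t_{j+1}^{-1}]$-algebra generating set of the Rees algebra, so $\hat{\mathcal{B}_j'} \cup \{t_{j+1}^{-1}\}$ is a $\k$-algebra generating set of $R_{j+1}$.

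For the forward direction, I would assume that Algorithm \ref{alg-StrongKhovanskii} terminates at every stage $j = 0, \ldots, n-1$. Each terminating inner application of \cite[Algorithm 2.18]{Kaveh-Manon-NOK} produces a finite Khovanskii basis $\mathcal{B}_j'$ of $\v_{\rho_{j+1}}$ on $R_j$ containing $\mathcal{B}_j$, and the lifting step turns this into a finite generating set $\mathcal{B}_{j+1}$ of $R_{j+1}$. Iterating $n$ times yields a finite generating set $\mathcal{B}_n$ of $R_n = \mathcal{R}(A, \hat{\v})$. Pulling back along the quotient $R_n \to R_n/\m_n R_n \cong A$ (i.e.\ setting each $t_i = 1$) gives a finite subset of $A$ that, by definition, is a strong Khovanskii basis for $\v$.

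For the reverse direction, suppose $\mathcal{B} \subset A$ is a finite strong Khovanskii basis. Then $\mathcal{R}(A, \hat{\v}) = R_n$ is finitely generated over $\k$. Since Lemma \ref{lem-iteratedRees} gives $R_j \cong R_{j+1}/\langle X_{j+1} - 1\rangle$, every $R_j$ is also finitely generated as a $\k$-algebra. At stage $j$ the algorithm runs \cite[Algorithm 2.18]{Kaveh-Manon-NOK} on $(R_j, \v_{\rho_{j+1}})$; by the same lemma, $\gr_{\v_{\rho_{j+1}}}(R_j) \cong R_{j+1}/\langle X_{j+1}\rangle$, which is finitely generated because $R_{j+1}$ is. The termination criterion from \cite[Algorithm 2.18]{Kaveh-Manon-NOK} applies, so the inner algorithm terminates at stage $j$; doing this for each $j = 0, \ldots, n-1$ shows Algorithm \ref{alg-StrongKhovanskii} terminates.

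The main obstacle is really a matter of bookkeeping rather than of any deep step: making sure that the sets $\mathcal{B}_j$ produced at each stage actually generate $R_j$ over $\k$ (so that the induction hypothesis at stage $j+1$ is legitimate) and that the inner algorithm is being applied to an input for which its termination criterion, phrased in terms of finite generation of $\gr_{\v_{\rho_{j+1}}}(R_j)$, is equivalent to finite generation of $R_{j+1}$. Both of these reduce to Lemma \ref{lem-iteratedRees} together with the Rees-algebra lifting observation, so once those are quoted precisely the proof is essentially formal.
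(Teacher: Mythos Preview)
Your proposal is correct and follows essentially the same approach as the paper's proof, which is extremely terse (three sentences). The only cosmetic difference is in the reverse direction: the paper observes that the specialization of a finite generating set of $R_n$ in each $R_j$ is already a generating set, hence a finite Khovanskii basis for $\v_{\rho_{j+1}}$ exists and \cite[Algorithm 2.18]{Kaveh-Manon-NOK} terminates; you instead argue via finite generation of $\gr_{\v_{\rho_{j+1}}}(R_j) \cong R_{j+1}/\langle X_{j+1}\rangle$. These are equivalent routes to the same conclusion, and your version makes the bookkeeping more explicit.
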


\begin{proof}
If \ref{alg-StrongKhovanskii} stops, $(A, \v)$ has a strong Khovanskii basis.  Conversely, if $\mathcal{B} \subset R$ is a finite strong Khovanskii basis, then its specialization in $R_j$ is a generating set.  It follows that the $j$-th instance of \cite[Algorithm 2.18]{Kaveh-Manon-NOK} terminates for each $j$. 
\end{proof}


Now we give an algebraic criterion for Algorithm \ref{alg-StrongKhovanskii} to stop. This will be a key step in the proof of Theorem \ref{thm-main-StrongKhovanskii}. 

\begin{proposition}\label{prop-ideal-StrongKhovanskii}
Let the pair $(A, \v)$ have Khovanskii basis $\mathcal{B} \subset A$ with $\v$ a valuation, and let $\I_{\mathcal{B}}$ be the kernel of the map $\k[\overline{X}, \overline{Y}]\to A \otimes_\k \k[t_1^\pm, \ldots,t_n^\pm]$, where $X_i \to 1 \otimes t_i^{-1}$ and $Y_j \to b_j \otimes t^{\hat{\v}(b_j)}$,  then $\mathcal{B}$ is a strong Khovanskii basis if and only if the ideal $\langle \I_{\mathcal{B}}, X_i \rangle$ is prime for all $i \in \{1, \ldots, n\}$. 
\end{proposition}

\begin{proof} If $\mathcal{B}$ is a strong Khovanskii basis, then we can reorder the variables $X_1, \ldots, X_n$ so that $\k[\overline{X}, \overline{Y}]/\langle \I_\mathcal{B}, X_i\rangle$ $\cong \gr_{\v_{\rho_i}}(R_{n-1})$ as in Algorithm \ref{alg-StrongKhovanskii}.  The latter algebra is a domain since $\v_{\rho_i}$ is a valuation, so Lemma \ref{lem-iteratedRees} implies that $\langle I, X_i\rangle$ is prime for all $i \in \{1, \ldots, n\}$. 

We prove the converse by induction. We let $R_m \subset A \otimes_\k T_m$ be as in Algorithm \ref{alg-StrongKhovanskii}, and we show that $R_m$ is finitely generated by the images of $X_1, \ldots, X_m$ and $\overline{Y}$ for each $m \in \{1, \ldots, n\}$.  The base case $R_0 = A$ is handled by the fact that $\mathcal{B}$ is a weak Khovanskii basis.  Assume this holds for $R_m$, and let $\I_m$ denote the kernel of $\phi_m: \k[X_1, \ldots, X_m, \overline{Y}] \to R_m$.  Let $\In(\I_m)$  denote the initial ideal of $\I_m$ with respect to the term weighting induced by the valuation $\v_{\rho_{m+1}}: A \otimes_\k T_{m+1} \to \overline{\Z}$, and let $J_m \subset \k[X_1, \ldots, X_m, \overline{Y}]$ denote the kernel of the map $\psi_m: \k[X_1, \ldots, X_m, \overline{Y}] \to gr_{\v_{\rho_{m+1}}}(R_m) \subset gr_{\v_{\rho_{m+1}}}(A) \otimes_\k T_m$.  Observe that $\v_{\rho_{m+1}}$ is homogeneous with respect to the $\Z^{m+1}$ grading on $R_m$.   We have $\In(\I_m) \subset J_m$ as above, and moreover, the images of $X_1, \ldots, X_m, \overline{Y}$ are a Khovanskii basis for $\v_{\rho_{m+1}}$ if and only if this is an equality.  In this case, the Rees algebra $R_{m+1}$ is finitely generated by the images of $X_1, \ldots, X_{m+1}, \overline{Y}$ as in the statement of the theorem above, so to prove the induction step, it suffices to show that $\In(\I_m) = J_m$.  The quotient algebra of $J_m$ is a domain, and the algebra obtained from this quotient by inverting the $X_1, \ldots, X_m$ is $gr_{\v_{\rho_{m+1}}}(A)\otimes_\k T_m$.   Therefore, it suffices to show that the quotient algebra of $\In(\I_m)$ is also a domain of the same dimension.


We prove the required statement with two general observations about algebras with gradings.  Let $S = \k[x, \overline{y}]/J$, where $J$ is prime and homogeneous with respect to a $\Z$-grading, and $deg(x) = -1$, then the kernel $J_0$ of the map $\k[\overline{y}] \to S/\langle x-1 \rangle$ is prime.  To see this, let $\phi$ be an isomorphism between a polynomial rings $\k[\ldots, \frac{y}{x^{deg(y_i)}}, \ldots]$ and $\k[\overline{y}]$ obtained by setting $x$ to $1$. Let $(\frac{1}{x}J)_0 \subset \k[\ldots, \frac{y}{x^{deg(y_i)}}, \ldots]$ be the prime ideal of degree $0$ forms.  Each element of $(\frac{1}{x}J)_0$ is mapped into $J_0$ by $\phi$, and both ideals have the same height, so it follows that the image $\phi((\frac{1}{x}J)_0)$ is $J_0$.  Moreover, the dimension of $S/\langle x-1 \rangle$ is one less than the dimension of $S$.   By repeatedly applying this observation to the prime ideal $\langle \I, X_{m+1} \rangle$, we see that the kernel $\I_m'$ of the map $\k[X_1, \ldots, X_m, \overline{Y}] \to \k[\overline{X}, \overline{Y}]/\langle X_{m+1}, X_{m+2}-1, \ldots, X_n -1 \rangle$ is prime, and the image algebra has the same dimension as $gr_{\v_{\rho_{m+1}}}(A)\otimes_\k T_m$.  
  
Next we consider the general case of an algebra $S \cong \k[x, \overline{y}]/J$ as above.  If $S = \bigoplus_{m \in \Z} S_m$, then the images $F_m$ of the spaces $S_m$ in $S/\langle x-1 \rangle$ form an algebra filtration, and $xF_m \subset F_{m-1}$.  The image $\mathcal{B}$ of the set $\overline{y}$ in $S/\langle x-1\rangle$ generates, and the same holds for the image $\overline{\mathcal{B}} \subset S/\langle x \rangle \cong gr_F(S)$.  It follows that $\mathcal{B}$ is a Khovanskii basis for the filtration $F$.  If $J_0$ denotes the kernel of the presentation of $S/\langle x-1\rangle$ and $J_0'$ denotes the kernel of the presentation of $S/\langle x \rangle$, we conclude that $J_0' = \In_F(J_0)$.   We apply this observation to the ideal $J = \I_{m+1}$, the kernel of the map $\phi_{m+1}: \k[X_1, \ldots, X_{m+1}, \overline{Y}] \to \k[\overline{X}, \overline{Y}]/\langle I, X_{m+2}-1, \ldots, X_n -1\rangle$ and $x = X_{m+1}$.   In this case, $J_0 = \I_m$, and $J_0' = \I_m'$ from above.  It follows that $\I_m' = \In(\I_m) \subset J_m$.  But we have seen that the quotient algebra of $\I_m' = \In(\I_m)$ has the same dimension as the quotient algebra of $J_m$.  This proves the induction step. \end{proof}

The localization of $R' = \k[\overline{X}, \overline{Y}]/\I_\mathcal{B}$ obtained by inverting $\phi(X_i), 1 \leq i \leq n$ is isomorphic to $A \otimes_\k T_n$.  As a consequence, there is an isomorphism $\tilde{\pi}: \Trop(\I_\mathcal{B}) \to \Trop(I_\mathcal{B}) \times \Q^n$, where $\tilde{\pi}(w_1, \ldots, w_d, m_1, \ldots, m_n) = (\ldots,w_j + \sum \v_{\rho_i}(b_j)m_i , \ldots, -m_1, \ldots, -m_n)$.  We let $\pi: \Trop(\I_\mathcal{B}) \to \Trop(I_\mathcal{B})$ be the composition of $\tilde{\pi}$ with the projection to the $\Trop(I_\mathcal{B})$ component, and we let $s: \Trop(I_\mathcal{B}) \to \Trop(\I_\mathcal{B})$ be the natural section $s({\bf w}) = ({\bf w}, 0, \ldots, 0)$. 

 The ideal $\I_\mathcal{B}$ is homogeneous with respect to the $\Z^n$ grading on $\k[\overline{X}, \overline{Y}]$, so for any $\m = (0, \ldots, 0, m_1, \ldots, m_n) \in \Q^{d + n}$ and ${\bf u} \in \Trop(\I_\mathcal{B})$ we have $\In_{{\bf u} + \m}(\I_\mathcal{B}) = \In_{{\bf u}}(\I_\mathcal{B})$. In particular, the fan structure on $\Trop(\I_\mathcal{B})$ induced from the Gr\"obner fan of $\I_\mathcal{B}$ is stable under addition by $\m \in \{(0, \ldots, 0)\} \times \Q^n \subset \Q^{d+n}$.  Moreover, if $\uu, \uu' \in \Trop(\I_\mathcal{B})$ have the same initial ideal, then the same holds for $\pi(\uu)$ and $\pi(\uu')$.   We give give $\Trop(I_\mathcal{B})$ the coarsest fan structure such that $\pi(\uu)$ and $\pi(\uu')$ share the same face whenever $\In_\uu(\I_\mathcal{B}) = \In_{\uu'}(\I_\mathcal{B})$.  This is a refinement of the Gr\"obner fan structure on $\Trop(I_\mathcal{B})$.   Faces $\tilde{C} \in \Trop(\I_\mathcal{B})$ are of the form $\tilde{C} \cong C \times \Q^n$ for $C \in \Trop(I_\mathcal{B})$.   We say that $\tilde{C} \in \Trop(\I_\mathcal{B})$ is the \emph{lift} of $C \in \Trop(I_\mathcal{B})$. 

We recall the notion of \emph{prime cone} from \cite{Kaveh-Manon-NOK}.  A face $C \in \Trop(I_\mathcal{B})$ is said to be a prime cone if its corresponding initial ideal is a prime ideal.  If $\Psi: |\Sigma| \to \Trop(I_\mathcal{B})$ corresponds to a valuation, then we must have that the face $C_i \in \Trop(I_\mathcal{B})$ containing $\Psi(\rho_i)$ is a prime cone for $1 \leq i \leq n$.  We let $\tilde{C}_i \in \Trop(\I_\mathcal{B})$ be the corresponding lift.

\begin{theorem}[Theorem \ref{thm-main-StrongKhovanskii}]\label{thm-primepointlift}
The set $\mathcal{B} \subset A$ is a strong Khovanskii basis if and only if $\tilde{w}_i = s\circ\Psi(\rho_i) \in \tilde{C}_i \subset \Trop(\I_\mathcal{B})$ is a prime point for each $\varrho_i \in \Sigma(1)$. 
\end{theorem}

\begin{proof}
For each ray $\rho_i$, the valuation $\v_{\rho_i}: A \to \overline{\Z}$ defines a valuation on $A \otimes_\k T_n$, which in turn induces a valuation on $R'$.  We let $\w_{\rho_i}: R' \to \overline{\Z}$ denote the weight quasivaluation induced by the values of $\v_{\rho_i}$ on the generating set $\{\phi(\overline{X}), \phi(\overline{Y})\} \subset R'$.   In particular, we have $\w_{\rho_i}(\phi(X_j)) = 0$ and $\w_{\rho_i}(\phi(Y_j)) = \v_{\rho_i}(b_j)$, so the weight of $\w_{\rho_i}$ is $\tilde{\Psi}(\rho_i) \in \tilde{C}_i$.   

There is an inclusion of algebras $R' \subset \mathcal{R}(A, \hat{\v})$.  We let $\textup{ord}_i : R' \to \overline{\Z}$ be the quasivaluation obtained by the $\langle \phi(X_i) \rangle$-adic filtration.  Moreover, we let $\textup{deg}_i: R' \to \overline{\Z}$ be the valuation obtained by reporting the homogeneous degree in the $i$-th direction.  We claim that $\textup{ord}_i + \textup{deg}_i = \w_{\rho_i}$ as homogeneous quasivaluations on $R'$.  This is a consequence of the fact that both sides of this equation take the same values on a shared Khovanskii basis $\{\phi(\overline{X}), \phi(\overline{Y})\}$.

Now, if $\mathcal{B}$ is a strong Khovanskii basis, then $R' = \mathcal{R}(A, \hat{\v})$, and $\langle \phi(X_i) \rangle \subset R'$ is a prime ideal.  It follows that $\textup{ord}_i$ is a valuation.  Since $\textup{deg}_i$ is a homogeneous grading function, this implies that $\w_{\rho_i}$ is a valuation as well, so $\tilde{C}_i$ is a prime cone.   Similarly, if $\tilde{C}_i$ is a prime cone, then $\w_{\rho_i}$ is a valuation, and $\textup{ord}_i$ must also be a valuation; this implies that $\langle \phi(X_i) \rangle$ is prime.  Proposition \ref{prop-ideal-StrongKhovanskii} completes the proof. 
\end{proof}


\section{Cox rings of projectivized toric vector bundles}\label{sec-Cox-TVB}

We apply our results from Section \ref{StrongKhovanskii} to the Cox rings of several classes of projectivized toric vector bundles. If $\E$ and $\E'$ are toric vector bundles such that $\E' = \E\otimes_{Y(\Sigma)} L$ for some $T_N$-linearized line bundle $L$, then the projectivizations agree: $\mathbb{P}\E' \cong \mathbb{P}\E$, and $\mathcal{R}(\E) \cong \mathcal{R}(\E')$. Because of this observation and Remark \ref{rem-tensorsumdiagram}, we assume that the minimum entry in each row of a diagram $D$ is $0$. 


We start by classifying those diagrams $D$ whose associated bundle $\E$ has $\mathcal{R}(\E)$ presented by the ``expected" relations. We call such a bundle \emph{complete intersection bundle}. More precisely, suppose that the linear ideal $L_\mathcal{B}  \subset \k[\overline{y}]$ is generated by linear forms $\overline{\ell} = \{\ell_1, \ldots, \ell_d\}$, where $\ell_i = \sum_{j=1}^r c_{i,j}y_j$, and let $M$ be the matrix with $(i, j)$-th entry equal to $c_{i,j}$.  We let $\alpha_i$ be the entry-wise minimum of the values $\hat{\v}(b_j)$ taken over the $y_j$ in the support of $\ell_i$.  Let $p_i = \frac{1}{X^{\alpha_i}}\sum_{j = 1}^r c_{i,j} X^{\hat{\v}(b_j)}Y_j \in \k[\overline{X}, \overline{Y}]$. It is straightforward to verify that $\langle p_1, \ldots, p_d \rangle \subseteq \I_\mathcal{B}$. 

\begin{definition}\label{def-formal}
Let $\E$ be a toric vector bundle corresponding to a $\v: \Sym(E) \to \O_\Sigma$ with Khovanskii basis $\mathcal{B} \subset E$. We say that $\E$ is a \emph{complete intersection bundle} if $\mathcal{B}$ is a strong Khovanskii basis and $\I_\mathcal{B} = \langle p_1, \ldots, p_d \rangle$, for $p_i$ $1 \leq i \leq d$ associated to $\ell_i \in L_\mathcal{B}$ as above. 
\end{definition}


By Theorem \ref{thm-main-StrongKhovanskii}, the projectivization $\mathbb{P}\E$ of any complete intersection bundle $\E$ is a Mori dream space.  Moreover, in this case the Cox ring $\R(\E)$ can be presented as a complete intersection.  For a subset $A \subset \{1, \ldots, n\}$ let $M_A$ be the matrix whose $j$-th row is given by the nonzero entries of the $j$-th row of $M$ where the rows $w_i$ in the diagram have a common minimal entry.   We let $m_A = \textup{rank}(M_A)$.



\begin{proposition}\label{prop-formal}
The image of the map $\Phi_\mathcal{B}: \k[\overline{X}, \overline{Y}] \to \Sym(E)\otimes_\k T_n$ is $\mathcal{R}(\E)$ with kernel $\I_\mathcal{B} = \langle p_1, \ldots, p_d\rangle$ if and only if for every $A \subset \{1, \ldots, n\}$ we have $d < |A| + m_A$, and for every $B \subset \{1, \ldots, n\}$ and $i \in B$ we have $1 + m_{\{i\}} < |B| + m_B$. 
\end{proposition}


\begin{proof}
Suppose the stated inequalities hold. We show that $\langle p_1, \ldots, p_d\rangle$ is a prime complete intersection. For dimension reasons, this implies that the $p_i$ generate $\I_\mathcal{B}$. Let $V \subset \mathbb{A}^n \times_\k \mathbb{A}^r$ be the scheme defined by $\langle p_1, \ldots, p_d\rangle$.  After inverting $X_1, \ldots, X_n$, the $p_i$ present $\Sym(E)\otimes_\k T_n$, which has dimension $n + r - d$.  For $A \subset \{1, \ldots, n\}$, let $\mathbb{A}^A \subset \mathbb{A}^n$ be the coordinate subspace defined by $X_i \mid i \in A$, and let $\mathbb{T}^A \subset \mathbb{A}^A$ be the torus orbit. The polynomials $p_{i, A} = p_i\!\!\mid_{x_i = 0, i \in A} \in \k[X_i \mid i \notin A, \overline{Y}]$ define $V \cap \mathbb{A}^A\times_\k \mathbb{A}^r$, and $V \cap \mathbb{T}^A\times_\k \mathbb{A}^r$ is a trivial vector bundle over $\mathbb{T}^A$ of rank $r - m_A$ with dimension $n-|A| + r - m_A$.  By assumption, $n - |A| + r - m_A < n + r - d$, so the dimension of $V$ is $n + r -d$, and $V$ is a generically reduced complete intersection.  The latter implies that $V$ is unmixed and reduced, so $\langle p_1, \ldots, p_d \rangle$ is prime.   A similar argument using the inequality $1 + m_{\{i\}} < |B| + m_B$ shows that $\langle \I_\mathcal{B}, X_i \rangle$ is prime for each $i$.  Conversely, we suppose $\I_\mathcal{B} = \langle p_1, \ldots, p_d \rangle$ and $\langle \I_\mathcal{B}, X_i \rangle$ are prime.  It follows that the height of $\I_\mathcal{B}$ is strictly smaller than the height of $\langle \I_\mathcal{B}, X_i\rangle$.  Moreover, $\langle \I_\mathcal{B}, X_i\rangle$ must have height strictly smaller than that of $\langle \I_\mathcal{B}, X_i, X_j \rangle$.  These two observations imply the inequalities  $d < |A| + m_A$ and $1 + m_{\{i\}} < |B| + m_B$. \end{proof}


\begin{definition}\label{def-uniform}
We say a bundle $\E$ is \emph{uniform} if we can find a Khovanskii basis $\mathcal{B} \subset E$ such that the associated matrix $M$ has all nonzero minors.
\end{definition}

\begin{remark}
Note that our condition defining uniform bundles is stronger than requiring $\mathcal{B}$ be a representation of a uniform matroid.  The $\mathcal{B}$ with $M$ having all nonzero minors form a dense, open subset in the representations of the uniform matroid. 
\end{remark}


\begin{corollary}\label{cor-uniform}
Let $\E$ be a uniform bundle with diagram $D$ and Khovanskii basis $\mathcal{B} \subset E$, then $\Phi_\mathcal{B}$ has image $\mathcal{R}(\E)$ with kernel $\I_\mathcal{B} = \langle p_1, \ldots, p_d \rangle$ if and only if for any $A \subset \{1, \ldots, n\}$ the nonzero entries of the corresponding rows of $D$ are contained in $r + |A| -2$ columns.
\end{corollary}

\begin{proof}
We have assumed that $M$ has all nonzero minors, so $m_A$ is equal to the minimum of $d$ and the number $s_A$ of columns of $D$ where the rows from $A$ have a common zero entry. Now we use Proposition \ref{prop-formal}.  Observe that $m - s_{\{i\}} < r = m - d$, so $s_{\{i\}} > d$ for any $i \in \{1, \ldots, n\}$, and $m_{\{i\}} = d$.   Now the presentation statement is equivalent to $1 + d < |A| + \min\{s_A, d\}$ for all $A$, which is in turn equivalent to the requirement that all nonzero entries of the $A$-rows of $D$ be contained in $r + |A| - 2$ columns.  
\end{proof}




\noindent
In the case that $d = 1$ ( the \emph{hypersurface} case),  this condition states that any two rows of such a diagram $D$ must have a common zero entry.   Moreover, if we work with bundles over $\mathbb{P}^{n-1}$, then any collection of $n-1$ rows in $D$ must share a common apartment.  It follows that these rows share at least $d+1$ zero entries.  
\begin{corollary}
Let $\E$ be a uniform bundle over $\mathbb{P}^{n-1}$ as above with $d < n-1$, then $\mathbb{P}\E$ is a Mori dream space. 
\end{corollary}


We say a bundle $\E$ is \emph{sparse} if we can find a Khovanskii basis $\mathcal{B} \subset E$ such that $D$ has at most one non-zero entry in each row.   Without loss of generality, we assume that the minimal circuits in $L_\mathcal{B}$ have size at least $3$. In this case $m_{\{i\}} = d$.   

\begin{corollary}\label{cor-sparse}
Let $\E$ be a sparse bundle with diagram $D$ and Khovanskii basis $\mathcal{B}$, then $\Phi_{\mathcal{B}}$ has image $R(\E)$ with kernel $\I_\mathcal{B} = \langle p_1, \ldots, p_d\rangle$.
\end{corollary}

\begin{proof}
Let $M = [T_A\mid M_A]$, then $|A|$ is at least equal to the number of columns in $T_A$. Without loss of generality we assume that $|A|$ equals the number of columns in $T_A$.  We must show that $d+1 < |A| + m_A$.   We consider three cases.  First, assume that the rank of $T_A$ is less than or equal to $|A| -2$, then $m_A \geq d- |A| +2$.   Next, assume that the rank of $T_A$ is $|A| - 1$.  We select an independent set $I_1$ from the columns of $T_A$ and complete it to a basis with columns $I_2$ from $M_A$.  We rearrange $M$ so that $I_1 \cup I_2$ come first, and row-reduce.  As the minimal circuits of $L_\mathcal{B}$ are of size at least $3$, the first $|I_1|$ rows of the resulting matrix must contain a non-zero entry in the columns past the first $d\times d$ block.  The row containing such an entry along with the last $I_2$ rows form a linearly independent set of size $d-|A| + 2$, so $m_A \geq d-|A|+2$.  Similarly, if $T_A$ has rank $|A|$, then $|I_2| = d-|A|$.  We consider the vectors formed by the first $I_1$ entries in the remaining columns in $M_A$.  Two of these vectors must be linearly independent, else $L$ contains a linear binomial in the members of $\mathcal{B}$ corresponding to the columns of $T_A$.  Taking the columns of $M_A$ in the positions of the two linearly independent vectors, plus the columns in $I_2$ then gives a linearly independent set of size $d-|A| + 2$. In all cases, we conclude that $d+1 < |A| + m_A$.    
\end{proof}

We can apply Corollary \ref{cor-sparse} to rank $2$ vector bundles, which were considered in \cite{Gonzalez}, and \cite{Nodland}.  If $D$ is the diagram of a rank $2$ vector bundle, then there are at most two distinct entries in each row of $D$.  To see this, note that the entries of the $i$-th row of $D$ are the values of a valuation $v_i: \Sym(E) \to \overline{\Z}$ on a spanning set $\mathcal{B} \subset E$.  It is an elementary consequence of the valuation axioms that members of $\mathcal{B}$ with distinct values must be linearly independent.  It follows that after an appropriate transformation, $D$ is the diagram of a sparse bundle.  The bundles considered in \cite[Section 6.2]{GHPS} and \cite[Proposition 5.2]{Nodland} are sparse by a similar argument.  

Now we have several examples illustrating Theorem \ref{thm-main-StrongKhovanskii} and Algorithm \ref{alg-StrongKhovanskii}.

\begin{example}[An example of Corollary \ref{cor-uniform}]
We consider the hypersurface bundle $\E$ over $\mathbb{P}^2$ defined by the ideal $L = \langle \sum_{i =1}^6 y_i \rangle \subset \k[\by]$ and the following diagram:  

\begin{equation}
D = \begin{bmatrix}
4 & 0 & 0 & 3 & 2 & 1\\
0 & 4 & 0 & 2 & 1 & 3\\
0 & 0 & 4 & 1 & 3 & 2\\
\end{bmatrix}
\end{equation}\\

Observe that $\E$ is uniform, but not sparse. Let $\mathcal{B} = \{b_1, \ldots, b_6\} \subset E$ be the corresponding arrangement. If $\psi_i$ is the $i$-th column, then $F_{\psi_i}(\v) = \langle b_i \rangle \subset E$.  The rays $\varrho_0, \varrho_1, \varrho_2$ of the fan of $\mathbb{P}^2$ correspond to the first, second, and third rows of $D$, respectively.  Each pair of rows lives in a common apartment of $\Trop(L)$. In particular, we have adapted bases over each face:  $\mathcal{B} \setminus \{b_1\}$ is adapted over $\sigma_0 = \R_{\geq 0}\{\varrho_1, \varrho_2\}$, $\mathcal{B} \setminus \{b_2\}$ is adapted over $\sigma_1 = \R_{\geq 0}\{\varrho_0, \varrho_2\}$, and $\mathcal{B} \setminus \{b_3\}$ is adapted over $\sigma_2 = \R_{ \geq 0}\{\varrho_0, \varrho_1\}$.  This diagram satisfies the condition in Corollary \ref{cor-uniform}, so $\mathcal{R}(\E)$ is presented by the following polynomial:

\[Y_1X_1^4 + Y_2X_2^4 + Y_3X_3^4 + Y_4X_1^3X_2^2X_3 + Y_5X_1^2X_2X_3^3 + Y_6X_1X_2^3X_3^2.\]

\end{example}

Next we see a hypersurface bundle over $\mathbb{P}^1 \times_\k \mathbb{P}^1$ whose Khovanskii basis is not strong. 

\begin{example}[An application of Algorithm \ref{alg-StrongKhovanskii}]\label{ex-Khovanskiiextend}
Let $\E$ be the rank $3$ bundle over $\mathbb{P}^1 \times_\k \mathbb{P}^1$ defined by the ideal $L = \langle \sum_{i=1}^4 y_1 \rangle \subset \k[\by]$ and the following diagram. 

\begin{equation}\label{diagram}
D = \begin{bmatrix}
2 & 1 & 0 & 0\\
0 & 2 & 1 & 0\\
0 & 0 & 2 & 1\\
1 & 0 & 0 & 2\\
\end{bmatrix}
\end{equation}\\

Let $\mathcal{B} \subset E$ be the corresponding arrangement. The ideal $\I_\mathcal{B}$ is generated by the polynomial $Y_1X_1^2X_4 + Y_2X_1X_2^2 + Y_3X_2X_3^2 + Y_4X_3X_4^2$.  The ideal $\langle \I_\mathcal{B}, X_1\rangle$ is not prime, so $\mathcal{B}$ is not a strong Khovanskii basis by Theorem \ref{thm-main-StrongKhovanskii}.  Algorithm \ref{alg-StrongKhovanskii} creates a strong Khovanskii basis from $\mathcal{B}$ by adding two new elements: $\mathcal{B}' = \mathcal{B} \cup\{ c_1, c_2\}$, where $c_1 = b_3 + b_4$ and $c_2 = b_1 + b_4$.   The diagram with respect to $\mathcal{B}'$ is obtained by adding two new columns obtained by applying the weight valuations represented by the rows of $D$ to $c_1$ and $c_2$:

\begin{equation}\label{diagram2}
D' = \begin{bmatrix}
2 & 1 & 0 & 0 & 1 & 0\\
0 & 2 & 1 & 0 & 0 & 1\\
0 & 0 & 2 & 1 & 1 & 0\\
1 & 0 & 0 & 2 & 0 & 1\\
\end{bmatrix}
\end{equation}\\

The ideal $\I_{\mathcal{B}'} \subset \k[X_1, X_2, X_3, X_4, Y_1, Y_2, Y_3, Y_4, Z_1, Z_2]$ is generated by $5$ polynomials:

\[ X_1X_3Y_3 +X_4^2Y_4 +X_1Z_1, \ \ X_1X_2Y_2+X_3^2Y_3-X_4Z_2,\] \[X_1X_4Y_1+X_2^2Y_2-X_3Z_1, \ \ X_3X_4Y_4+X_1^2Y_1+X_2Z_2,\] \[X_1X_3Y_1Y_3 - X_2X_4Y_2Y_4 -Z_1Z_2.\]

\bigskip

We conclude that $\mathcal{R}(\E) \cong  \k[\overline{X}, \overline{Y}, \overline{Z}]/\I_{\mathcal{B}'}$.

\end{example}

\begin{example}[A non-example]\label{ex-NonExample}

 We take $\E$ to be the rank $3$ bundle defined by the diagram:

\[D= \begin{bmatrix} 0 & 0 & 0 & 2 & 1 \\ 3 & 1 & 0 & 1 & 0\\ 0 & 1 & 3 & 0 & 1\end{bmatrix} \in \Delta(\mathbb{P}^2, \Trop(\langle y_4 - y_1 - y_2, y_5-y_2-y_3 \rangle)) \]\\

	\begin{figure}
		\centering

		\begin{minipage}{.5\textwidth}
		\begin{tikzpicture}[scale = .4]
		
		\draw[ultra thin] (0,0) coordinate (a_1) -- (0,12) coordinate (a_2);
		\draw[ultra thin] (a_2) -- (12, 0) coordinate (a_3);
		\draw[ultra thin] (a_3) -- (a_1);
		
		\filldraw[black] (0,11) coordinate (a_4) circle (4pt) node [anchor = east] {$\omega_1\omega_2^{3n-1}$};
		\filldraw[black] (8,0) coordinate (a_5) circle (4pt) node [anchor = north] {$\omega_1^n\omega_2^{2n}$};
		\filldraw[black] (8,3) coordinate (a_6) circle (4pt) node [anchor = west] {$\omega_1\omega_2^{n-1}\omega_3^{2n}$};
		\filldraw [black] (a_1) circle (4pt) node [anchor=north] {$\omega_1^{3n}$};
		\filldraw [black] (a_2) circle (4pt) node [anchor=east] {$\omega_2^{3n}$};
		
		\filldraw[color=red, fill=red!5] (1, 11) coordinate (a_7) circle (5pt);		
		\filldraw[color=red, fill=red!5] (2, 10) coordinate (a_8) circle (5pt);	
		 \filldraw[black] (2, 11) node [anchor = west] {$(2\omega_2^2\omega_3 + \omega_2\omega_3^2)\omega_2^{3(n-1)}$};	
		\draw[thick, red] (a_7) .. controls +(right:7mm) and +(up:7mm) .. (a_8);
		
		\filldraw[color=purple, fill=purple!5](6, 6)coordinate (a_9) circle (5pt);
		\filldraw[black] (6, 7) node [anchor = west] {$(2\omega_2^2\omega_3 + \omega_2\omega_3^2)^{n-1}\omega_2^3$};	
		\draw[thick, purple] (5,7.5) .. controls +(right:7mm) and +(up:7mm) .. (a_9);
			
		\filldraw[color=blue, fill=blue!5](8, 4)coordinate (a_10) circle (5pt);
		\filldraw[black] (8, 5) node [anchor = west] {$(2\omega_2^2\omega_3 + \omega_2\omega_3^2)^{n}$};	
		\draw[thick, blue] (7,5.5) .. controls +(right:7mm) and +(up:7mm) .. (a_10);

		\filldraw[color=black, fill=black!10,  ultra thick] (a_1) -- (a_4) -- (a_6) -- (a_5) -- cycle;
		
		\end{tikzpicture}
		\end{minipage}

		\begin{minipage}{.5\textwidth}
		\begin{tikzpicture}[scale = .4]
		
		\draw[ultra thin] (0,0) coordinate (a_1) -- (0,11) coordinate (a_2);
		\draw[ultra thin] (a_2) -- (11, 0) coordinate (a_3);
		\draw[ultra thin] (a_3) -- (a_1);
		
		\filldraw[black] (0,10) coordinate (a_4) circle (4pt) node [anchor = east] {$\omega_1\omega_2^{3n-2}$};
   	\filldraw[black] (7,0) coordinate (a_5) circle (4pt) node [anchor = north] {$\omega_1^n\omega_2^{2n-1}$};
		\filldraw[black] (7,3) coordinate (a_6) circle (4pt) node [anchor = west] {$\omega_1\omega_2^{n-1}\omega_3^{2n-1}$};
		\filldraw [black] (a_1) circle (4pt) node [anchor=north] {$\omega_1^{3n-1}$};
		\filldraw [black] (a_2) circle (4pt) node [anchor=east] {$\omega_2^{3n-1}$};
		
		\filldraw[color=red, fill=red!5] (1, 10) coordinate (a_7) circle (5pt);		
		\filldraw[color=red, fill=red!5] (2, 9) coordinate (a_8) circle (5pt);	
		 \filldraw[black] (2, 10) node [anchor = west] {$(2\omega_2^2\omega_3 + \omega_2\omega_3^2)\omega_2^{3(n-1)-1}$};	
		\draw[thick, red] (a_7) .. controls +(right:7mm) and +(up:7mm) .. (a_8);
		
		\filldraw[color=purple, fill=purple!5](6, 5)coordinate (a_9) circle (5pt);
		\filldraw[black] (6, 6) node [anchor = west] {$(2\omega_2^2\omega_3 + \omega_2\omega_3^2)^{n-1}\omega_2^2$};	
		\draw[thick, purple] (5,6.5) .. controls +(right:7mm) and +(up:7mm) .. (a_9);

		\filldraw[color=black, fill=black!10,  ultra thick] (a_1) -- (a_4) -- (a_6) -- (a_5) -- cycle;
		
		\end{tikzpicture}
		\end{minipage}

		\begin{minipage}{.5\textwidth}
		\begin{tikzpicture}[scale = .4]
		
		\draw[ultra thin] (0,0) coordinate (a_1) -- (0,10) coordinate (a_2);
		\draw[ultra thin] (a_2) -- (10, 0) coordinate (a_3);
		\draw[ultra thin] (a_3) -- (a_1);
		
		\filldraw[black] (0,9) coordinate (a_4) circle (4pt) node [anchor = east] {$\omega_1\omega_2^{3n-3}$};
		\filldraw[black] (6,0) coordinate (a_5) circle (4pt) node [anchor = north] {$\omega_1^n\omega_2^{2(n-1)}$};
		\filldraw[black] (6,3) coordinate (a_6) circle (4pt) node [anchor = west] {$\omega_1\omega_2^{n-1}\omega_3^{2(n-1)}$};
		\filldraw [black] (a_1) circle (4pt) node [anchor=north] {$\omega_1^{3n-2}$};
		\filldraw [black] (a_2) circle (4pt) node [anchor=east] {$\omega_2^{3n-2}$};
		
		\filldraw[color=red, fill=red!5] (1, 9) coordinate (a_7) circle (5pt);		
		\filldraw[color=red, fill=red!5] (2, 8) coordinate (a_8) circle (5pt);	
		 \filldraw[black] (2, 9) node [anchor = west] {$(2\omega_2^2\omega_3 + \omega_2\omega_3^2)\omega_2^{3(n-1)-2}$};	
		\draw[thick, red] (a_7) .. controls +(right:7mm) and +(up:7mm) .. (a_8);
		
		\filldraw[color=purple, fill=purple!5](6, 4)coordinate (a_9) circle (5pt);
		\filldraw[black] (6, 5) node [anchor = west] {$(2\omega_2^2\omega_3 + \omega_2\omega_3^2)^{n-1}\omega_2$};	
		\draw[thick, purple] (5,5.5) .. controls +(right:7mm) and +(up:7mm) .. (a_9);
		
		\filldraw[color=black, fill=black!10,  ultra thick] (a_1) -- (a_4) -- (a_6) -- (a_5) -- cycle;
		
		\end{tikzpicture}
		\end{minipage}

		\caption{Graded components $F_{0,3n; 3n}$ (top), $F_{0, 3n-1; 3n-1}$ (middle) and $F_{0, 3n-2; 3n-2}$ (bottom) of $\Gamma$.}
		\label{fig-NonExample}
	\end{figure}
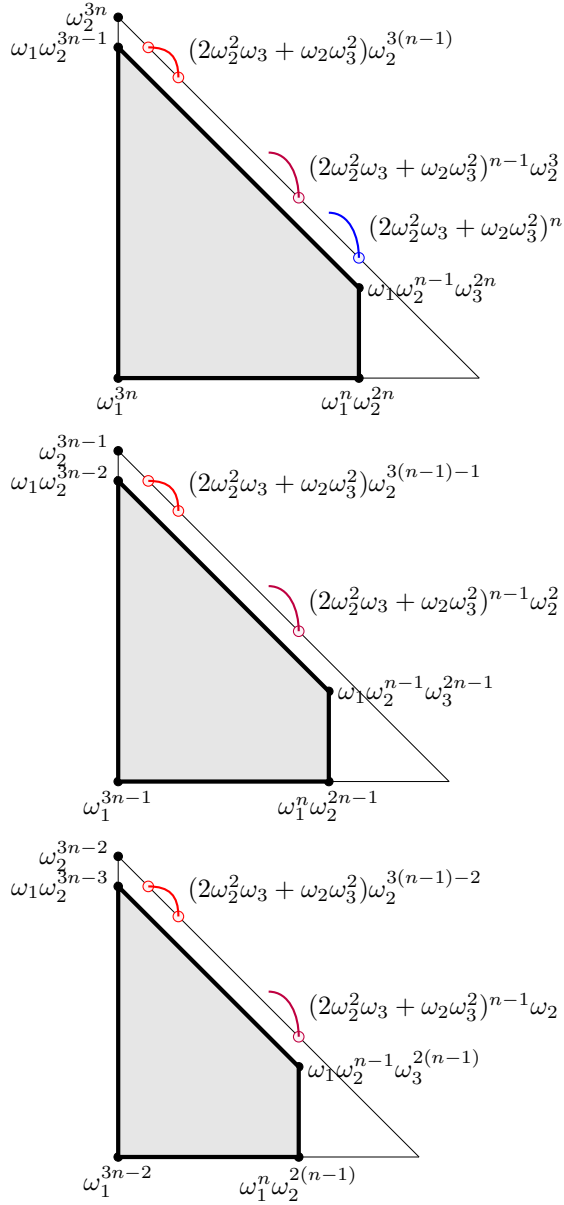

Let $\mathcal{R}_2 \subset \Sym(E)\otimes_\k \k[t_1^\pm, t_2^\pm]$ be the $2$nd Rees algebra in Algorithm \ref{alg-StrongKhovanskii}. This algebra is presented as a quotient of $\k[Y_1\ldots Y_5, X_1, X_2]$ by $\langle Y_5X_1 - Y_2X_2 -Y_3, Y_4X_1^2 -Y_1X_2^2 -Y_2\rangle$.  This ideal is the kernel of the map determined by sending $Y_1 \to \omega_1t_2^2$, $Y_2 \to \omega_2t_2$, $Y_3 \to \omega_3$, $Y_4 \to (\omega_1 + \omega_2)t_1^2t_2$, $Y_5 \to  (\omega_2 + \omega_3)t_1$, where $\omega_1, \omega_2, \omega_3 \in \Sym(E)$ are a set of polynomial generators.   

We show that the associated graded algebra $\gr_3(\mathcal{R}_2)$ of $\mathcal{R}_2$ with respect to the weight valuation $\v_{\rho_3}$ defined by the third row of $D$ is not finitely generated.  Let $F_{0,n; n} \subset \mathcal{R}_2$ be the subspace $F_{n, 0} \cap \Sym^n(E)$, and let $\Gamma = \bigoplus_{n \geq 0} F_{0, n; n}$.   As $\gr_3(\Gamma)$ is an invariant subring of $\gr_3(\mathcal{R}_2)$ by the action of a torus, its failure to be finitely generated implies that $\gr_3(\mathcal{R}_2)$ cannot be finitely generated, and so neither can the Rees algebra $\mathcal{R}_3= \mathcal{R}(\E)$.  An invariant theory computation shows that $\Gamma$ is generated by $F_{0, n;n}$ for $n = 1, 2, 3$.  These components have the following elements as basis members:

\[\{\omega_1, \omega_2\} \subset F_{0, 1;1} \ \ \ \{\omega_1^2, \omega_1\omega_2, \omega_2^2, \omega_1\omega_3\} \subset F_{0, 2;2} \]  \[\{\omega_1^3, \omega_1^2\omega_2, \omega_1\omega_2^2, \omega_2^3, \omega_1^2\omega_3, \omega_1\omega_2\omega_3, \omega_1\omega_3^2, (2\omega_2^2\omega_3 + \omega_2\omega_3^2) \} \subset F_{0, 3;3} \]

\bigskip

It can be shown by induction that the components $F_{0, 3n-2; 3n-2}, F_{0, 3n-1; 3n-1}, F_{0, 3n;3n}$ have bases depicted in Figure \ref{fig-NonExample}.  All monomials in the quadrillateral shaded region are included, as is $\omega_2^{3n}$ (resp. $\omega_2^{3n-1}, \omega_2^{3n-2}$).   Also included are the polynomials $(2\omega_2^2\omega_3 + \omega_2\omega_3^2)^k\omega_2^p$, where $p + 3k = 3n$ (resp. $3n-1$, $3n-2$).   This basis can be shown to be adapted to the valuation $\v_{\rho_3}$.   The components of the associated graded algebra $\gr_3(\Gamma) \subset \gr_3(\mathcal{R}_2)$ are similarly described, except the polynomials $(2\omega_2^2\omega_3 + \omega_2\omega_3^2)^k\omega_2^p$ are replaced with the monomials $(2\omega_2^2\omega_3)^k\omega_2^p$. This is the source of the infinite generation.   In the component $F_{0, 3n;3n}$, the corner element $\omega_1\omega_2^{n-1}\omega_3^{2n}$ is only reachable as a difference of $(2\omega_2^2\omega_3 + \omega_2\omega_3^2)^{n-1}\omega_1\omega_2^2 \in F_{0, 3(n-1); 3(n-1)}F_{0,3;3}$ and monomials obtained from the product $F_{0, 3n-1; 3n-1}F_{0,1;1}$.   However, after taking associated graded, no terms exist in lower degrees with monomials supported in the degrees necessary to reach this point.   It follows that for every $n$, $\omega_1\omega_2^{n-1}\omega_3^{2n} \in F_{0, 3n;3n}$ is an essential generator of $\gr_3(\Gamma)$. 

 \end{example}

\subsection{Diagram alterations and the fan $\Delta(\Sigma, \Trop_\star(I_\mathcal{B}))$}\label{sec-diagrams}

We finish this section by showing that the language of prime cones also provides a mechanism to make new Mori dream space bundles from old.   In principle, these results can be formulated for any multi-Rees algebra, but we focus on the projectivized toric vector bundle case.    To prepare, we make several definitions.   Let $\mathcal{B} \subset \Sym(E)$ be a (not necessarily linear) Khovanskii basis for a toric vector bundle bundle $\E$ defined by $\Psi: |\Sigma| \to \Trop(I_\mathcal{B})$. By construction, the rows of the corresponding diagram $D$ are in the subset $\Trop_\star(I_\mathcal{B}) \subset \Trop(I_\mathcal{B})$ of those points which come from the tropical variety $\Trop(L_\mathcal{B})$, where $L_\mathcal{B}$ is the ideal which vanishes on the linear part of $\mathcal{B}$. In particular, $w \in \Trop(L_\mathcal{B})$ defines a weight valuation $\v_w: \Sym(E) \to \overline{\Z}$, which in turn defines the point $(\v_w(b_1), \ldots, \v_w(b_r)) \in \Trop_\star(I_\mathcal{B})$. 

\begin{definition}\label{def-diagramfan}
Let $\Delta(\Sigma, \Trop_*(I_\mathcal{B}))$ be the subset of $\Trop_*(I_\mathcal{B})^n\subset \Q^{n \times r}$ consisting those matrices $D$ which are adapted to $\Sigma$. 
\end{definition}

By Proposition \ref{prop-diagram}, any $D \in \Delta(\Sigma, \Trop_\star(I_\mathcal{B}))$ defines a toric vector bundle over $Y(\Sigma)$.

\begin{proposition}\label{prop-diagramfan}
The set $\Delta(\Sigma, \Trop_\star(I_\mathcal{B})) \subset \Q^{n \times r}$ is the support of a polyhedral fan.
\end{proposition}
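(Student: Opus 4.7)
The plan is to exhibit $\Delta(\Sigma, \Trop_\star(I_\mathcal{B}))$ as a finite union of rational polyhedral cones in $\Q^{n \times r}$, and then invoke a common refinement argument to obtain a polyhedral fan structure on its support.

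First I would show that $\Trop_\star(I_\mathcal{B})$ itself is the support of a polyhedral fan in $\Q^r$. Each $b_j \in \mathcal{B}$ is a finite linear combination of monomials in the linear generators of $\mathcal{B}$, so for $w \in \Trop(L_\mathcal{B})$ the value $\v_w(b_j)$ is the minimum of finitely many linear forms in the coordinates of $w$. Thus the map $w \mapsto (\v_w(b_1), \ldots, \v_w(b_r))$ is piecewise linear on the Bergman fan $\Trop(L_\mathcal{B})$, and its image $\Trop_\star(I_\mathcal{B})$ is a finite union of rational polyhedral cones. After taking a common refinement, $\Trop_\star(I_\mathcal{B})$ acquires a polyhedral fan structure, and consequently so does the product $\Trop_\star(I_\mathcal{B})^n \subset \Q^{n \times r}$.

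Next I would stratify $\Delta(\Sigma, \Trop_\star(I_\mathcal{B}))$ by combinatorial data. Unpacking the $\Sigma$-adapted condition, a matrix $D$ with rows $w_1, \ldots, w_n$ lies in $\Delta(\Sigma, \Trop_\star(I_\mathcal{B}))$ precisely when, for each $\sigma \in \Sigma$, there exists a maximal face $\tau_\sigma$ of the Gr\"obner fan $\Sigma(I_\mathcal{B})$ such that (a) the row $w_i$ lies in $A_{\tau_\sigma}$ for every $\varrho_i \in \sigma(1)$, and (b) any linear relation $\sum a_j \rho_{i_j} = 0$ among the ray generators of $\sigma$ is matched by $\sum a_j \pi_{\tau_\sigma}(w_{i_j}) = 0$ after projection. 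For a fixed tuple $\bs{\tau} = (\tau_\sigma)_{\sigma \in \Sigma}$, condition (a) cuts out finitely many polyhedral faces inside $\tau_\sigma^{|\sigma(1)|}$ (since each apartment $A_{\tau_\sigma}$ is a polyhedral subcomplex of $\tau_\sigma$), while condition (b) is a finite system of linear equations on the entries of $D$; together they define a polyhedral locus $\Delta_{\bs{\tau}}$.

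Finally, because $\Sigma$ is finite and $\Sigma(I_\mathcal{B})$ has finitely many maximal faces, only finitely many tuples $\bs{\tau}$ arise, so $\Delta(\Sigma, \Trop_\star(I_\mathcal{B})) = \bigcup_{\bs{\tau}} \Delta_{\bs{\tau}}$ is a finite union of rational polyhedral cones, and a standard refinement will glue these into a rational polyhedral fan with the desired support. I expect the main obstacle to be condition (b) for non-simplicial $\sigma$: namely, checking that once (a) confines the $w_i$ to the apartment $A_{\tau_\sigma}$, the coordinates of $w_i$ projected out by $\pi_{\tau_\sigma}$ are determined linearly by the standard-monomial expansions (as implicitly used in Proposition \ref{prop-diagram}), so that condition (b) indeed becomes a manifestly linear condition on $D$ rather than a piecewise-linear one.
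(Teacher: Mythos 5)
Your proof is correct and follows essentially the same route as the paper's: the paper's one-line argument gives $\Trop_\star(I_\mathcal{B})$ the Gr\"obner fan structure and observes that if $(w_1,\ldots,w_n)$ is adapted and $C_i$ is the face containing $w_i$, then $C_1\times\cdots\times C_n\subset \Delta(\Sigma,\Trop_\star(I_\mathcal{B}))$, so the set is a union of products of cones. Your version spells this out and, by adding the linear conditions (your condition (b)) coming from relations among the ray generators, also covers non-simplicial $\Sigma$, where the paper's product claim as literally stated would need exactly that correction; in the paper's setting ($\Sigma$ smooth, hence simplicial) condition (b) is vacuous and the two arguments coincide.
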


\begin{proof}
Give $\Trop_\star(I_\mathcal{B})$ the Gr\"obner fan structure.  If $(w_1, \ldots, w_n) \in \Delta(\Sigma, \Trop_\star(I_\mathcal{B}))$, and $w_i \in C_i$, where $C_i$ is the face of $\Trop_\star(I_\mathcal{B})$ containing $w_i$, then $C_1 \times \ldots \times C_n \subset \Delta(\Sigma, \Trop_\star(I_\mathcal{B}))$ as well.  
\end{proof}

Next we describe alterations to the rows of a diagram $D \in \Delta(\Sigma, \Trop_\star(I_\mathcal{B}))$ which preserve the finite generation of the corresponding total section ring.   

\begin{proposition}\label{prop-add-subtract}
Let $\E$ and $D$ be as above, with $\mathcal{R}(\E)$ finitely generated.  Let $w \in \Trop(\I_\mathcal{B})$ be a prime point with $\pi(w) \in \Trop_\star(I_\mathcal{B})$, and let $D'$ be obtained by appending $\pi(w)$ to $D$.  If $\Sigma'$ is a fan such that $D' \in \Delta(\Sigma', \Trop_\star(I_\mathcal{B}))$, then the associated bundle $\E'$ has $\mathcal{R}(\E')$ finitely generated.   If $D''$ is obtained from $D$ by deleting a row, and $D'' \in \Delta(\Sigma'', \Trop_\star(I_\mathcal{B}))$, then the associated bundle $\E''$ has $\mathcal{R}(\E'')$ finitely generated. 
\end{proposition}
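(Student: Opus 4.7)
The plan is to reduce both assertions to the prime-point criterion of Corollary \ref{cor-primepointlift}, which says that $\mathcal{B}$ is a strong Khovanskii basis precisely when $s(\Psi(\rho_i)) \in \Trop(\I_\mathcal{B})$ is a prime point for every ray $\varrho_i$. So in each case I need to check two things: that $\mathcal{B}$ is still a Khovanskii basis for the new bundle, and that the appropriate lifts remain prime points. The former is handled uniformly by Proposition \ref{values}: every row of $D'$ (resp.\ $D''$) lies in $\Trop_\star(I_\mathcal{B}) \subseteq \mathcal{K}(I_\mathcal{B})$, so the resulting weight data determines a valuation with Khovanskii basis $\mathcal{B}$, and the hypothesis $D' \in \Delta(\Sigma', \Trop_\star(I_\mathcal{B}))$ supplies the apartment compatibility required by Theorem \ref{thm-main-bundle}.

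For the append-a-row case, finite generation of $\mathcal{R}(\E)$ gives, via Corollary \ref{cor-primepointlift}, that $s(\Psi(\rho_i))$ is prime for every $\varrho_i \in \Sigma(1)$; these lifts serve the preserved rays of $\Sigma'$. What remains is to show that the lift $s(\pi(w))$ associated to the new ray is prime. The key input is the product structure of the fan on $\Trop(\I_\mathcal{B})$ established just before Corollary \ref{cor-primepointlift}: every Gr\"obner face has the form $\tilde{C} \cong C \times \Q^n$, so two points of $\Trop(\I_\mathcal{B})$ with the same image under $\pi$ lie in the same face and carry identical initial ideals. Since $\pi(s(\pi(w))) = \pi(w)$, the points $w$ and $s(\pi(w))$ belong to a common Gr\"obner face, and the primality of $w$ therefore transfers to $s(\pi(w))$. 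Corollary \ref{cor-primepointlift} then yields that $\mathcal{B}$ is a strong Khovanskii basis for $\v'$, so $\mathcal{R}(\E')$ is finitely generated.

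The delete-a-row case is essentially free: the primality conditions for $\E''$ are indexed by a subset of the rays already verified for $\E$, so Corollary \ref{cor-primepointlift} again applies. The only genuine work in the whole argument is the ``$s(\pi(w))$ is prime iff $w$ is prime'' step, and this is already a consequence of the product description of the Gr\"obner fan on $\Trop(\I_\mathcal{B})$ given in the paragraphs preceding Corollary \ref{cor-primepointlift}; no further tropical or Gr\"obner computation is needed.
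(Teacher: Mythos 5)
Your reduction to Corollary \ref{cor-primepointlift} rests on a false inference: you claim that finite generation of $\mathcal{R}(\E)$ gives, ``via Corollary \ref{cor-primepointlift}, that $s(\Psi(\rho_i))$ is prime for every $\varrho_i\in\Sigma(1)$.'' But that corollary characterizes when the \emph{given} set $\mathcal{B}$ is a strong Khovanskii basis, and the hypothesis of the proposition is only that $\mathcal{R}(\E)$ is finitely generated --- which means \emph{some} (possibly strictly larger) finite strong Khovanskii basis exists, not that $\mathcal{B}$ itself is one. Example \ref{ex-Khovanskiiextend} is an explicit counterexample to your first step: there $\mathcal{R}(\E)$ is finitely generated (it is presented by $\I_{\mathcal{B}'}$ after adjoining two elements), yet $\langle \I_\mathcal{B}, X_1\rangle$ is not prime, so the lifts $s(\Psi(\rho_i))$ are \emph{not} all prime points of $\Trop(\I_\mathcal{B})$. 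Both halves of your argument (append and delete) rely on this inference, so both have the same gap; your argument only establishes the proposition under the stronger hypothesis that $\mathcal{B}$ is a strong Khovanskii basis for $\E$.

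The part of your argument that is genuinely correct is the transfer of primality from $w$ to $s(\pi(w))$: since $\pi(s(\pi(w)))=\pi(w)$, the two points differ by a vector in the directions along which initial ideals of the $\Z^n$-homogeneous ideal $\I_\mathcal{B}$ are invariant, so they share an initial ideal and $s(\pi(w))$ is prime. But the paper's proof sidesteps the strong-Khovanskii-basis issue entirely by working with the iterated Rees algebra structure of Lemma \ref{lem-iteratedRees}: appending the row $\pi(w)$ realizes $\mathcal{R}(\E')$ as the Rees algebra of the finitely generated algebra $\mathcal{R}(\E)$ with respect to the valuation determined by $\pi(w)$, and primality of $w$ is what guarantees this Rees algebra is again finitely generated; deleting a row realizes $\mathcal{R}(\E'')$ as a quotient $\mathcal{R}(\E)/\langle X_i-1\rangle$, which is finitely generated for free. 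If you want to keep your route, you would need to first replace $\mathcal{B}$ by a finite strong Khovanskii basis $\mathcal{B}'$ (which exists by hypothesis) and then verify that $w$ lifts to a prime point of $\Trop(\I_{\mathcal{B}'})$ --- and that last step is not automatic and is essentially what Proposition \ref{prop-coneStrongKhovanskii}'s proof has to work for.
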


\begin{proof}
By the construction in Lemma \ref{lem-iteratedRees}, the algebra $\mathcal{R}(\E')$ is the Rees algebra of $\mathcal{R}(\E)$ and the valuation defined by $\pi(w)$. As $\pi(w)$ is the image of a prime point, $\mathcal{R}(\E')$ is finitely generated.  In the second case, $\mathcal{R}(\E'')$ is a quotient of $\mathcal{R}(\E)$.   
\end{proof}

\noindent
Proposition \ref{prop-add-subtract} can be used to decide when the pullback of a Mori dream space bundle along a single toric blow-up is a Mori dream space bundle. 

\begin{corollary}\label{cor-toricblowup}
Let $\E$ be a toric vector bundle over a toric variety $Y(\Sigma)$ with diagram $D(\Phi) \in \Delta(\Sigma, \Trop_\star(I_\mathcal{B}))$, let $\beta:Y(\Sigma') \to Y(\Sigma)$ be a blow-up of a toric orbit of $Y(\Sigma)$ corresponding to a new ray $\rho = \Q_{\geq 0}p \subset |\Sigma|$, and let $\beta^*\E$ be the pullback bundle. If $\Phi(p)$ is contained in the image of the union of the set of prime cones of $\Trop(\I_\mathcal{B})$ under $\pi: \Trop(\I_\mathcal{B}) \to \Trop(I_\mathcal{B})$, then $\mathcal{R}(\beta^*\E)$ is finitely generated. 
\end{corollary}

\noindent
By Corollary \ref{cor-toricblowup}, if $\I_\mathcal{B}$ is well-poised, then $\mathcal{R}(\beta^*\E)$ is finitely generated for any such blow-up $\beta: Y(\Sigma') \to Y(\Sigma)$.  In particular, this holds for any sparse bundle.

By Theorem \ref{thm-main-StrongKhovanskii}, each row $w_i$ of $D$ is in the image $\pi(\tilde{C}_i) \subset \Trop(I_\mathcal{B})$ of a prime cone $\tilde{C}_i \subset \Trop(\I_\mathcal{B})$.  Let $K_i$ be the polyhedral complex $\pi(\tilde{C}_i) \cap \Trop_\star(I_\mathcal{B})$.  By Proposition \ref{prop-add-subtract}, we can append $w_i' \in K_i$ to $D$, then delete $w_i$ while maintaining a diagram adapted to $\Sigma$ and preserving finite generation of the associated total section ring.  Next we show that given $D \in \Delta(\Sigma, \Trop_\star(I_\mathcal{B}))$, with finitely generated total section ring, there is a polyhedral family of diagrams containing $D$ all of whose total section rings are also finitely generated.  To prepare for this construction, let $w_1, \ldots, w_n$ be the rows of $D \in \Delta(\Sigma, \Trop_\star(I_\mathcal{B}))$. We show the existence of a generating set $\GG \subset \I_\mathcal{B}$ with well-behaved initial forms. 


\begin{lemma}\label{lem-goodbasis}
There exists a generating set $\{f_1, \ldots, f_\ell\} = \GG \subset \I_\mathcal{B}$ with the property that for each $w_i$, the initial forms of $\GG$ generate $\In_{s(w_i)}(\I_\mathcal{B})$, and $\In_{s(w_i)}(f_j) = f_j\!\!\mid_{X_i = 0}$. 
\end{lemma}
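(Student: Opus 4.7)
The central observation is that $\I_\mathcal{B}$ is $\Z^n$-graded with respect to the multidegree $\deg(X_i) = -\e_i$, $\deg(Y_j) = \hat{\v}(b_j)$ induced by the defining map $X_i \mapsto t_i^{-1}$, $Y_j \mapsto b_j \otimes t^{\hat{\v}(b_j)}$. A $\Z^n$-homogeneous element of $\I_\mathcal{B}$ of degree $\gamma$ therefore has the form
\[
f = \sum_\alpha c_\alpha\, Y^\alpha X^{\beta_\alpha}, \qquad \beta_{\alpha,i} = \langle \alpha, w_i\rangle - \gamma_i,
\]
because the $i$-th coordinate of $\alpha \cdot \hat{\v}(\bar b)$ equals $\langle \alpha, w_i\rangle$. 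Since $s(w_i) = (w_i, 0)$ assigns weight zero to every $X$-variable, the $s(w_i)$-weight of the monomial $Y^\alpha X^{\beta_\alpha}$ is $\langle \alpha, w_i\rangle$, so on each $\Z^n$-homogeneous element the $X_i$-exponent and the $s(w_i)$-weight of the $\alpha$-term differ only by the constant $\gamma_i$. Consequently the $s(w_i)$-minimum-weight terms of such an $f$ are exactly its terms of minimum $X_i$-exponent, and the equality $\In_{s(w_i)}(f) = f\!\!\mid_{X_i = 0}$ holds precisely when that minimum $X_i$-exponent is zero.

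The strategy is now two steps. First, I would produce a finite $\Z^n$-homogeneous set $\{g_1, \ldots, g_\ell\} \subset \I_\mathcal{B}$ that is a simultaneous Gr\"obner basis at the (finitely many) weights $s(w_1), \ldots, s(w_n)$. Each initial ideal $\In_{s(w_i)}(\I_\mathcal{B})$ is $\Z^n$-graded, because the weight $s(w_i)$ depends only on the $Y$-exponents, so the initial form of a $\Z^n$-homogeneous element is again $\Z^n$-homogeneous; Noetherianity then yields a finite $\Z^n$-homogeneous Gr\"obner basis at each $s(w_i)$, and I take the union.

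Second, I normalize each $g_j$ by dividing out the largest monomial in the $X_i$'s that divides every term: for each $j$ and $i$, set $k_{j,i} := \min_\alpha \beta_{\alpha,i}$ and define $f_j := g_j / \prod_i X_i^{k_{j,i}}$. The element $f_j$ still lies in $\I_\mathcal{B}$, because each $\phi(X_i) = t_i^{-1}$ is a unit in the target ring, so every $X_i$ is a non-zero-divisor on $\k[\bar X, \bar Y]/\I_\mathcal{B}$ and polynomial division by $X^\delta$ stays inside the ideal. After this normalization every $f_j$ has a monomial with $X_i$-exponent zero for each $i$, so by the preliminary computation $\In_{s(w_i)}(f_j) = f_j\!\!\mid_{X_i = 0}$, which is the second required property. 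For the generation condition, the relation $\In_{s(w_i)}(g_j) = \prod_k X_k^{k_{j,k}} \cdot \In_{s(w_i)}(f_j)$ shows that the old generators of $\In_{s(w_i)}(\I_\mathcal{B})$ lie in the ideal generated by $\{\In_{s(w_i)}(f_j)\}$, while each $\In_{s(w_i)}(f_j) \in \In_{s(w_i)}(\I_\mathcal{B})$ because $f_j \in \I_\mathcal{B}$. Hence $\F = \{f_1, \ldots, f_\ell\}$ satisfies both conclusions of the lemma.

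The main subtlety to watch is the bookkeeping in the preliminary identification of $X_i$-exponents with $s(w_i)$-weights on $\Z^n$-homogeneous elements; once that translation is in place the rest is a routine Noetherian-plus-normalization argument, made legitimate by the $X_i$ being non-zero-divisors on $\k[\bar X, \bar Y]/\I_\mathcal{B}$.
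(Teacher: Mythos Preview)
Your proof is correct and follows essentially the same route as the paper: take a $\Z^n$-homogeneous Gr\"obner basis for each weight $s(w_i)$, form the union, and normalize each element so that no $X_i$ divides it, using throughout the observation that on a $\Z^n$-homogeneous element the $s(w_i)$-weight of a term equals its $X_i$-exponent plus a constant. The only difference is in the normalization step: the paper replaces each generator by its irreducible factor lying in the prime ideal $\I_\mathcal{B}$, whereas you divide by the maximal $X$-monomial common factor and appeal to the $X_i$ being non-zero-divisors modulo $\I_\mathcal{B}$; both achieve the same ``no $X_i$ divides $f_j$'' condition, and your version makes the weight/exponent bookkeeping more explicit than the paper's one-line appeal to ``homogeneity and irreducibility.''
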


\begin{proof}
We let $G_i \subset \I_\mathcal{B}$ be a Gr\"obner basis with respect to $w_i$.  We assume without loss of generality that the elements of $G_i$ are irreducible and homogeneous with respect to the $\Z^{n+1}$ grading on $\I_\mathcal{B}$.  We let $\GG = \bigcup_{i =1}^n G_i$.  It remains to show that $\In_{s(w_i)}(f) = f\!\!\mid_{X_j = 0}$ for each $f \in \F$.  This is a consequence of the homogeneity and irreducibility of $f$. 
\end{proof}

We define $C_\GG \subset \Delta(\Sigma, \Trop_\star(I_\mathcal{B}))$ to be the set of those diagrams $D'$ with the property that $\In_{s(w_i')}(f) = \In_{s(w_i)}(f)$ for each $f \in \F$, where $w_i'$ is the $i$-th row of of $D'$. 

\begin{proposition}\label{prop-coneStrongKhovanskii}
Let $\E'$ be the bundle defined by $D' \in C_\GG$, then $\mathcal{R}(\E')$ is finitely generated. 
\end{proposition}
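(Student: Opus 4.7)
The strategy is to apply Theorem \ref{thm-main-StrongKhovanskii} and verify that $\langle \I^{D'}_{\mathcal{B}}, X_i \rangle$ is prime for each $i \in [n]$, where $\I^{D'}_\mathcal{B}$ denotes the defining ideal of $\mathcal{R}(\E')$. By the setup preceding Lemma \ref{lem-goodbasis}, the reference diagram $D$ corresponds to a bundle with finitely generated total section ring, so $\langle \I^{D}_\mathcal{B}, X_i \rangle$ is prime. The polynomials $\F$ of Lemma \ref{lem-goodbasis} form a Gr\"obner basis of $\I^D_\mathcal{B}$ with respect to $s(w_i)$ for every $i$, with the key property $\In_{s(w_i)}(f) = f|_{X_i = 0}$.

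Both $\I^{D}_\mathcal{B}$ and $\I^{D'}_\mathcal{B}$ become $I_\mathcal{B} \otimes_\k \k[t_1^\pm, \ldots, t_n^\pm]$ after inverting all $X_i$, so there is a natural bijection between $\Z^n$-homogeneous elements of one and of the other via the monomial transformation $Y^\alpha X^\beta \mapsto Y^\alpha X^{\beta + \sum_j \alpha_j(D'_{\cdot j} - D_{\cdot j})}$ (pre-multiplied by a suitable monomial in $\bar{X}$ to keep all exponents non-negative). Applying this to each $f \in \F$ produces a set $\F' \subset \I^{D'}_\mathcal{B}$. I would then show $\F'$ inherits the Gr\"obner basis and initial-form properties of $\F$: the $\Z^n$-homogeneity of $f' \in \F'$ with respect to the $D'$-grading forces every monomial $Y^\alpha X^\beta$ in $f'$ to satisfy $\beta_i = \sum_j \alpha_j D'_{ij} - d'_i \geq 0$, so that $\langle s(w_i'), (\alpha,\beta)\rangle = \sum_j \alpha_j w'_{i,j}$ is minimized exactly when $\beta_i = 0$, giving $\In_{s(w_i')}(f') = f'|_{X_i = 0}$. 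The defining property of $C_\F$ guarantees these initial forms correspond under the substitution to the initial forms $\In_{s(w_i)}(f) = f|_{X_i = 0}$ of the original $\F$, and a Hilbert-function comparison (both ideals have the same Hilbert series up to $\Z^n$-degree shift) ensures that $\F'$ generates $\I^{D'}_\mathcal{B}$ and is a Gr\"obner basis with respect to each $s(w_i')$.

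Consequently, the presentation of $\k[\bar{X}, \bar{Y}]/\langle \I^{D'}_\mathcal{B}, X_i \rangle$ is given by $\{f'|_{X_i=0} : f' \in \F'\}$, and these correspond under the monomial bijection to the generators $\{f|_{X_i=0} : f \in \F\}$ of the prime ideal $\langle \I^D_\mathcal{B}, X_i\rangle$. The two quotient rings are therefore isomorphic up to inverting monomials that are units in each quotient, so primeness transfers to $\langle \I^{D'}_\mathcal{B}, X_i \rangle$, and Theorem \ref{thm-main-StrongKhovanskii} concludes that $\mathcal{R}(\E')$ is finitely generated. The main obstacle I anticipate is careful handling of the monomial substitution in the case when $D'_{\cdot j} - D_{\cdot j}$ has mixed signs; a naive substitution produces Laurent polynomials, and this will force pre-multiplying each $f \in \F$ by an appropriate $\bar{X}$-monomial so that $\F'$ lies in $\I^{D'}_\mathcal{B}$ as genuine polynomials, while tracking the resulting twisted $\Z^n$-degrees so that the Gr\"obner-basis comparison remains consistent.
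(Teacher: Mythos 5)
Your overall strategy is genuinely different from the paper's: you try to transport primeness directly from $\langle \I^{D}_\mathcal{B}, X_i\rangle$ to $\langle \I^{D'}_\mathcal{B}, X_i\rangle$ via the monomial substitution $Y_j \mapsto Y_j X^{D'_{\cdot j}-D_{\cdot j}}$, whereas the paper builds the diagram with all $2n$ rows $w_1,w_1',\ldots,w_n,w_n'$ and runs an iterated Rees-algebra induction, adjoining one new variable $x_i'$ at a time and then specializing. The local computations you do are fine: the substitution is a ring automorphism of $\k[\bar{X}^{\pm},\bar{Y}]$ carrying $\I^{D}_\mathcal{B}\cdot\k[\bar{X}^{\pm},\bar{Y}]$ to $\I^{D'}_\mathcal{B}\cdot\k[\bar{X}^{\pm},\bar{Y}]$, and your verification that the $C_\F$ condition forces $\In_{s(w_i')}(f') = f'\!\mid_{X_i=0}$ to match $\In_{s(w_i)}(f) = f\!\mid_{X_i=0}$ is correct.

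The gap is the step ``$\F'$ generates $\I^{D'}_\mathcal{B}$ and is a Gr\"obner basis with respect to each $s(w_i')$.'' Your substitution only controls the ideals after inverting the $X_k$; as honest ideals of $\k[\bar{X},\bar{Y}]$ you only get $\langle\F'\rangle \subseteq \I^{D'}_\mathcal{B}$, with equality requiring a saturation argument, and primeness does not pass between an ideal and a proper subideal in either direction. The justification you offer --- that the two ideals have the same $\Z^n$-graded Hilbert series up to a degree shift --- is false: the substitution shifts the $X$-degree by an amount depending on the $Y$-exponent $\alpha$, not by a fixed vector, and indeed $\dim F_\psi(\v)$ and $\dim F_\psi(\v')$ genuinely differ (already for a rank-one example with $n=1$). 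The legitimate Hilbert-function argument only applies when comparing two initial ideals of the \emph{same} ideal under different weights, which is exactly how the paper uses it: at each inductive step it shows $\In_{s(w_i')}(\I)\supseteq\langle \In_{s(w_i)}(f_j)\rangle = \In_{s(w_i)}(\I)$ for a single ideal $\I$, forces equality, and then uses a height/primality argument to see that the explicitly lifted generators $\tilde f_j$ generate the next Rees ideal. Establishing that your $\F'$ generates $\I^{D'}_\mathcal{B}$ is essentially equivalent to that inductive content, so as written the proof is missing its main step.
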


\begin{proof}
We will make a diagram $\hat{D}$ for $\Sigma(2n)$ whose rows are $w_1, w_1', \ldots, w_n, w_n'$ and show that the associated total section ring is finitely generated.  The proposition then follows from the fact that $R(\E')$ is a specialization of this section ring. 

Starting with $\I_0 = \I_\mathcal{B} \subset \k[\overline{X}, \overline{Y}]$, we build a sequence of ideals $\I_i \subset \k[\overline{X}, x_1', \ldots, x_i', \overline{Y}]$ each satisfying Theorem \ref{thm-main-StrongKhovanskii}. By construction, $\In_{s(w_1')}(\I_0) = \langle f_1\!\!\mid_{X_1 = 0}, \ldots, f_\ell\!\!\mid_{X_1 = 0}\rangle$ is prime, it follows that the Rees algebra of $\v_{s(w_1')}$ and $\mathcal{R}(\E)$ can be presented as a quotient of $\k[\overline{X}, x_1', \overline{Y}]$ by the ideal $\I_1 = \langle \tilde{f}_1, \ldots, \tilde{f}_\ell\rangle$, where the $\tilde{f}_i$ are obtained as follows. The form $\tilde{f}$ is constructed from $f = \sum C_{\alpha, \beta}X^\alpha Y^\beta$ by appending a power $(x_1')^{p(\alpha, \beta)}$, where $p(\alpha, \beta)$ is the difference between $\v_{w_1'}(b^\beta)$ and the minimum such value that appears among the monomials of $f$.  By construction, $\tilde{f}$ is in the kernel of the presentation of the Rees algebra, and the initial forms $\In_{s(w_1')}(\tilde{f}) = \In_{s(w_1)}(f)$ generate a prime ideal of height equal to the height of $\I_1$.  It follows that the lifts $\tilde{f}$ generate $\I_1$, and constitute a set $\tilde{\GG} \subset \I_1$ satisfying the properties of Lemma \ref{lem-goodbasis}.  We may now repeat this procedure with $w_2'$, and so on.  
\end{proof}

Proposition \ref{prop-coneStrongKhovanskii} shows that the locus of diagrams in $\Delta(\Sigma, \Trop_\star(I_\mathcal{B}))$ which correspond to bundles $\E$ with $\mathcal{R}(\E)$ finitely generated is a union of polyhedral cones.  It is natural to ask if these cones are faces of a fan structure on $\Delta(\Sigma, \Trop_\star(\I_\mathcal{B}))$ (Conjecture \ref{conj-fan}).   The next proposition is a weakened version of such a result.  


\begin{proposition}\label{prop-weakfan}
There is a finite fan $\F$ supported on $\Delta(\Sigma, L_\mathcal{B})$ such that the set of diagrams $D \in \Delta(\Sigma, L_\mathcal{B})$ which satisfy Proposition \ref{prop-formal} for some minimal generating set $\{\ell_1, \ldots, \ell_d\} \subset L_\mathcal{B}$ is the union of faces of $\F$. 
\end{proposition}

\begin{proof}
Fix a minimal generating set $\overline\ell = \{\ell_1, \ldots, \ell_d\} \subset L_\mathcal{B}$. We define $F_{\overline\ell}$ to be the fan supported on $\Trop(L_\mathcal{B})$ by first taking the Gr\"obner fan of $L_\mathcal{B}$ and then refinining by the domains of linearity for the piecewise linear functions obtained by tropicalizing the $\ell_i$.  In particular there is a specific choice of initial forms of the $\ell_i$ corresponding to each face of $F_{\overline\ell}$.  We let $\F_{\overline\ell}$ be the fan obtained by taking the common refinement of $\Delta(\Sigma, L_\mathcal{B})$ with $F_{\overline\ell} \times \cdots \times \F_{\overline\ell} \subset \Trop(L_\mathcal{B})^n$. 

Any face $\sigma \in \F_{\overline\ell}$ determines initial forms of $\ell_i$ for each of $n$ choices of points from $\Trop(L_\mathcal{B})$, and this information in turn determines each matrix $M_A$.  It follows that for a given choice of ranks $\mathcal{M} = \{ m_A \mid A \subset \Sigma(1)\}$, there is a union of faces $\F_{\mathcal{M},\overline\ell} \subset \F_{\overline\ell}$ such that each diagram $D \in \sigma$ with $\sigma \subset \F_{\mathcal{M},\overline\ell}$ realizes $\mathcal{M}$.   As a consequence, the set of diagrams which satisfy the inequalities in Proposition \ref{prop-formal} are the points contained in union of the appropriate $\F_{\mathcal{M},\overline{\ell}}$.  Now we observe that $F_{\overline\ell}$ only depends on the supports of the $\ell_i \in \overline\ell$, and that there is a finite set of possible selections of supports.  We take the common refinement of the possible $F_{\overline\ell} \subset \Trop(L_\mathcal{B})$; this defines a fan structure $\F$ on $\Delta(\Sigma, L_\mathcal{B})$. 

\end{proof}


\begin{example}
We apply Propositions \ref{prop-coneStrongKhovanskii} and \ref{prop-add-subtract} to the bundle from Example \ref{ex-Khovanskiiextend}.  To find an appropriate set $\GG \subset \I_\mathcal{B}$ we must add two polynomials to the generating set:

\[ X_1X_3Y_3 +X_4^2Y_4 +X_1Z_1, \ \ X_1X_2Y_2+X_3^2Y_3-X_4Z_2,\] \[X_1X_4Y_1+X_2^2Y_2-X_3Z_1, \ \ X_3X_4Y_4+X_1^2Y_1+X_2Z_2,\] \[X_1X_3Y_1Y_3 - X_2X_4Y_2Y_4 -Z_1Z_2,\] \[X_2^3Y_2^2+X_4^2Y_1Z_1-X_2X_3Y_2Z_1-X_3^2X_4Y_1Y_3,\] \[X_3^3Y_3^2-X_1^2Y_2Z_1-X_3X_4Y_3Z_2-X_1X_4^2Y_2Y_4\]

\bigskip

In this case $\mathcal{B}'$ is composed of linear forms, so $I_{\mathcal{B}'} = L_{\mathcal{B}'}$. The cone $C_\GG \subset \Delta(\Sigma, \Trop(L_{\mathcal{B}'}))$ is composed of diagrams of the form:

\begin{equation}\label{diagram3}
D = \begin{bmatrix}
\ell_1 +2a & \ell_1 + a & \ell_1 & \ell_1 & \ell_1+a & \ell_1\\
\ell_2 & \ell_2 +2b & \ell_2 + b & \ell_2 & \ell_2 & \ell_2+b\\
\ell_3 & \ell_3 & \ell_3 + c + d & \ell_3 + c & \ell_3 + c& \ell_3\\
\ell_4 + e & \ell_4 & \ell_4 & \ell_4 + e + f & \ell_4 & \ell_4+ e\\
\end{bmatrix}
\end{equation}\\

\noindent
Here $\ell_1,\ldots, \ell_4 \in \Z$ correspond to addition by elements of the lineality space of $\Trop(L_{\mathcal{B}'})$ (equivalently tensoring $\E$ by $T_N$-linearized line bundles), and $a, b, c, d, e, f \in \Z_{\geq 0}$. 

The tropical variety $\Trop(\I_{\mathcal{B}'})$ contains one prime cone $\tilde{C}_0$ in addition to the $4$ prime cones containing the rows of the matrix $D'$ from Example \ref{ex-Khovanskiiextend}.  The image $\pi(\tilde{C}_0)$ is $C_0 = \Q(1,1,1,1,1,1) + \Q_{\geq 0}(0, 0, 0, 0, 1, 0) + \Q_{\geq 0}(0, 0, 0, 0, 0, 1)$.  Let $\Sigma'$ be the fan of the blow-up $\mathcal{BL}_p(\mathbb{P}^1\times_\k \mathbb{P}^1)$, where $p$ is the toric fixed point corresponding to the cone spanned by $\{(1,0), (0, -1)\}$ in the fan of $\mathbb{P}^1\times_\k \mathbb{P}^1$.  It can be shown that the following diagram is adapted to $\Sigma'$, where the bottom row is associated to the new ray in $\Sigma'$. 

 \begin{equation}\label{diagram4}
D'' = \begin{bmatrix}
2 & 1 & 0 & 0 & 1 & 0\\
0 & 2 & 1 & 0 & 0 & 1\\
0 & 0 & 2 & 1 & 1 & 0\\
1 & 0 & 0 & 2 & 0 & 1\\
0 & 0 & 0 & 0 & 1 & 1\\
\end{bmatrix}
\end{equation}\\
By Proposition \ref{prop-add-subtract}, the algebra $\mathcal{R}(\E'')$ corresponding to the bundle $\E''$ defined by this diagram is finitely generated. 
\end{example}


\section{Examples from flag varieties and cluster algebras}\label{examples}
In this section we discuss several examples of $\O_N$ valuations on the coordinate rings of Grassmannian varieties, and more general cluster varieties. We also present general methods to construct toric vector bundles.   

\subsection{Vector bundles from tropicalization}

By Theorem \ref{thm-main-bundle}, one can construct a toric vector bundle by finding a point $\Phi \in \Trop_{\O_N}(L)$, where $L$ is a linear ideal.  The following proposition is a consequence of the fact that the circuits of a linear ideal are a tropical basis. 

\begin{proposition}\label{constructbundle}
Let $\Psi = \overline{\psi}$ be an $n$-tuple of piecewise linear functions, and suppose that for each circuit $\sum_{i \in J} a_i\bx_i \in L$,  $\bigoplus_{i \in J} \psi_i = \bigoplus_{i \in J \setminus \{j\}} \psi_i$ for each $j \in J$, then $\overline{\psi} \in \Trop_{\O_N}(L)$.  Moreover, suppose $\overline{P}$ is a collection of $n$ integral polyhedra in $M_\Q$ such that $P_j$ is in the convex hull of the union $\bigcup_{i \in J \setminus \{j\}} P_i$ for each circuit as above, then the support functions of the polyhedra $\overline{P}$ define a point in $\Trop_{\O_N}(L)$. 
\end{proposition}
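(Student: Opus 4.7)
The plan is to leverage two ingredients already in place. First, Proposition \ref{tropistrop} tells us that a tropical basis for a classical ideal $I$ remains a tropical basis for $\Trop_{\O_N}(I)$; in particular, $\Psi \in \Trop_{\O_N}(L)$ can be verified by checking the bending relation \eqref{bending} only on a chosen generating tropical basis. Second, it is a standard fact in matroid/linear tropical geometry that the minimal circuits of a linear ideal $L$ form a tropical basis of $L$. Combining these, it suffices to check the bending relation on the circuits $\sum_{i \in J} a_i x_i \in L$, which in terms of $\Psi = \bar{\psi}$ is exactly the condition $\bigoplus_{i \in J} \psi_i = \bigoplus_{i \in J \setminus \{j\}} \psi_i$ for every $j \in J$. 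This is the hypothesis of the first statement, so $\bar{\psi} \in \Trop_{\O_N}(L)$ follows immediately.

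For the second statement, I would translate the geometric hypothesis on the polyhedra $\bar P = (P_1, \dots, P_n)$ into the piecewise-linear hypothesis just verified. Recall that with the $\min$ convention on valuations (as in the definition of $\w_N$), the support function of an integral polyhedron $P \subset M_\Q$ is $\psi_P(\rho) = \min\{\langle \rho, m \rangle \mid m \in P\}$. Two standard properties are needed: $(a)$ $\psi_{\conv(Q_1 \cup \cdots \cup Q_k)} = \bigoplus_i \psi_{Q_i}$, and $(b)$ if $Q \subseteq R$ then $\psi_Q \succeq \psi_R$ (i.e.\ $\psi_Q \geq \psi_R$ pointwise). Applied to a circuit-index set $J$ with $P_j \subseteq \conv\bigl(\bigcup_{i \in J \setminus \{j\}} P_i\bigr)$, property $(b)$ gives $\psi_{P_j} \succeq \bigoplus_{i \in J \setminus \{j\}} \psi_{P_i}$ via property $(a)$; absorbing $\psi_{P_j}$ into the min therefore yields the identity $\bigoplus_{i \in J} \psi_{P_i} = \bigoplus_{i \in J \setminus \{j\}} \psi_{P_i}$ for each $j \in J$. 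This puts $\bar\psi = (\psi_{P_1}, \dots, \psi_{P_n})$ in the situation of the first statement, so the tuple of support functions lies in $\Trop_{\O_N}(L)$.

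There is no real obstacle beyond bookkeeping with the $\min$-convention: one only has to be careful that the paper's use of $\succeq$ (where $a \succeq b$ means $a \oplus b = b$, i.e.\ $a \geq b$ pointwise) is the inclusion-reversing order on support functions, so containment of polyhedra produces the correct inequality of PL functions. Everything else is a direct transcription of the circuit-tropical-basis principle for linear ideals, augmented by Proposition \ref{tropistrop} to pass from $\bar{\Q}$-tropical bases to $\O_N$-tropical bases.
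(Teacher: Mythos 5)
Your proposal is correct and follows exactly the route the paper intends: the paper gives no written proof beyond the remark that the statement ``is a consequence of the fact that the circuits of a linear ideal are a tropical basis,'' and your argument simply fills in the details via Proposition \ref{tropistrop} and the standard support-function identities $\psi_{\conv(Q_1\cup\cdots\cup Q_k)}=\bigoplus_i\psi_{Q_i}$ and $Q\subseteq R\Rightarrow\psi_Q\succeq\psi_R$. The bookkeeping with the $\min$-convention is handled correctly, so nothing is missing.
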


\begin{proposition}\label{linearconstructbundle}
Let $\overline{\lambda}(\bt) = (\lambda_1(\bt), \ldots, \lambda_n(\bt))$ with $\lambda_i(\bt) \in \k(T_N)$ be in the kernel of the matrix $M_I$ whose rows are given by the coefficients of the minimal circuits of $L$, then $(\w_N(\lambda_1), \ldots, \w_N(\lambda_n)) \in \Trop_{\O_N}(L)$.  Moreover, if $\lambda_i(\bt) \in S_0 \subset \k(T_N)$ for each $1 \leq i \leq n$, then each $\w_n(\lambda_i)$ is the support function of a polytope as in Lemma \ref{constructbundle}. 
\end{proposition}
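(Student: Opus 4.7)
The plan is to deduce Proposition \ref{linearconstructbundle} directly from Proposition \ref{constructbundle} by verifying the circuit condition, exploiting that $\w_N : Q_M \to \O_N$ is a valuation (as recorded just after its definition).

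First I would unpack the hypothesis. Since the rows of $M_I$ are the coefficient vectors of the minimal circuits of $L$, membership of $\bar\lambda$ in $\ker(M_I)$ means that for every minimal circuit $\sum_{i \in J} a_i \bx_i \in L$ the relation $\sum_{i \in J} a_i \lambda_i(\bt) = 0$ holds in $Q_M$. For any fixed $j \in J$ we may solve:
\[
\lambda_j \;=\; -\sum_{i \in J \setminus \{j\}} (a_i/a_j)\,\lambda_i.
\]
Applying $\w_N$, using $\w_N(c) = 0$ for $c \in \k^\times$ together with the ultrametric inequality $\w_N(f+g) \succeq \w_N(f) \oplus \w_N(g)$, I would obtain
\[
\w_N(\lambda_j) \;\succeq\; \bigoplus_{i \in J \setminus \{j\}} \w_N(\lambda_i).
\]
Because this inequality holds for \emph{every} choice of $j \in J$, the pointwise $\min$ $\bigoplus_{i \in J}\w_N(\lambda_i)$ is always attained at two or more indices, so
\[
\bigoplus_{i \in J}\w_N(\lambda_i) \;=\; \bigoplus_{i \in J \setminus \{j\}} \w_N(\lambda_i) \qquad \text{for each } j \in J.
\]
This is precisely the circuit condition in the first half of Proposition \ref{constructbundle}, yielding $(\w_N(\lambda_1), \ldots, \w_N(\lambda_n)) \in \Trop_{\O_N}(L)$.

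For the second statement I would invoke the identification in the Definition of $\w_N$: when $\lambda_i \in S_0 = \k[T_N]$, the function $\w_N(\lambda_i)$ is exactly the support function of the Newton polytope $P_i$ of $\lambda_i$, an integral polytope in $M$. Under this identification $\bigoplus$ of support functions corresponds to the support function of the convex hull of the union, so the circuit identities just derived translate into $P_j \subseteq \mathrm{conv}\bigl(\bigcup_{i \in J \setminus \{j\}} P_i\bigr)$ for every $j \in J$. That is exactly the hypothesis of the polyhedral half of Proposition \ref{constructbundle}, finishing the argument.

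No genuine obstacle arises: the entire proof is a translation between valuation axioms for $\w_N$ and polytope containment. The only point worth noting is that it suffices to test only \emph{minimal} circuits, which is exactly the content already used in Proposition \ref{constructbundle} (that circuits of a linear ideal form a tropical basis).
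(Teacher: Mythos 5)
Your proof is correct, but it takes a different route from the paper's. The paper's proof is a one-line appeal to Proposition \ref{trop}: since $\bar\lambda$ kills every circuit and the circuits generate the linear ideal $L$, the assignment $x_i \mapsto \lambda_i(\bt)$ descends to a ring homomorphism $\phi_{\bar\lambda}: \k[\bx]/L \to Q_M$, and $\w_N \circ \phi_{\bar\lambda}$ is then a valuation on $\k[\bx]/L$ whose values on the generators automatically give a point of $\Trop_{\O_N}(L)$. You instead verify the bending condition of Proposition \ref{constructbundle} by hand: solving the circuit relation for $\lambda_j$, applying the ultrametric inequality for $\w_N$, and observing that since $\w_N(\lambda_j) \succeq \bigoplus_{i \in J\setminus\{j\}} \w_N(\lambda_i)$ holds for \emph{every} $j \in J$, the pointwise minimum over $J$ is attained at least twice at each $\rho \in N$. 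Both arguments are sound and rest on the same underlying fact (circuits of a linear ideal form a tropical basis); yours is more elementary and self-contained, avoiding the need to note that $\phi_{\bar\lambda}$ is well-defined on the quotient, while the paper's is shorter and emphasizes the conceptual point that tropical points arise as restrictions of valuations. Your treatment of the second assertion --- translating the pointwise inequality $\w_N(\lambda_j) \geq \min_{i \neq j}\w_N(\lambda_i)$ into the containment $P_j \subseteq \conv\bigl(\bigcup_{i \in J\setminus\{j\}} P_i\bigr)$ via the standard duality between polytope containment and support functions --- is also correct and in fact more explicit than the paper, which does not address that sentence in its proof at all.
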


\begin{proof}
Proposition \ref{trop} immediately implies that  $(\w_N(\lambda_1), \ldots, \w_N(\lambda_n)) \in \Trop_{\O_N}(L)$.  
\end{proof}

\subsection{$\O_N$ tropical points on Grassmannian varieties}

Let $I_{2, n}$ be the Pl\"ucker ideal which cuts out the affine cone $X_{2,n} \subset \bigwedge^2(\k^n)$ over the Grassmannian variety $Gr_2(\k^n)$ of $2$-planes in $n$-space.  In general, the ideal $I_{2, n}$ is well-poised. It follows that, similar to tropicalized linear spaces, flat $T_N$ families $\pi: \mathcal{G} \to Y(\Sigma)$ with reduced, irreducible fibers, general fiber $X_{2, n}$, and a $T_N$-equivariant embedding into the trivial bundle $\bigwedge^2(\k^n) \times Y(\Sigma)$ correspond to piecewise linear maps $\Psi: \Sigma \to \Trop(I_{2, n})$, or equivalently a point $\overline{\psi} \in \Trop_{\O_N}(I_{2, n})$.  Moreover, the Pl\"ucker generators $p_{ij}p_{k\ell} - p_{ik}p_{j\ell} + p_{i\ell4}p_{jk}$ for $1 \leq i < j < k < \ell \leq n$ form a tropical basis of $I_{2, n}$. 

Let us consider the Pl\"ucker ideal $I_{2, 4} = \langle p_{12}p_{34} - p_{13}p_{24} + p_{14}p_{23} \rangle$.  We choose four points $\m = \{m_1, m_2, m_3, m_4\} \subset M$ and we let $\Delta(\m)$ be the convex hull of the set $\m$. Finally, we let $P_{ij} = conv\{m_i, m_j\}$. The following is straightforward to verify by hand.

\begin{lemma}\label{24case}
The support functions $\psi_{ij}$ of the edges $P_{ij}$ define a point on $\Trop_{\O_N}(I_{2, 4})$. 
\end{lemma}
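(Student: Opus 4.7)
The plan is to reduce the claim to a pointwise combinatorial check and then invoke Proposition \ref{tropistrop}. Since $I_{2,4}$ is principal, generated by the single Pl\"ucker relation $f = p_{12}p_{34} - p_{13}p_{24} + p_{14}p_{23}$, the set $\{f\}$ is itself a tropical basis of $I_{2,4}$, so by Proposition \ref{tropistrop} it suffices to verify the bending condition \eqref{bending} for the tuple $\Psi = (\psi_{12},\psi_{13},\psi_{14},\psi_{23},\psi_{24},\psi_{34})$ against $f$ alone. By definition, this amounts to showing that the pointwise minimum of the three functions
\[
\psi_{12}+\psi_{34},\qquad \psi_{13}+\psi_{24},\qquad \psi_{14}+\psi_{23}
\]
is attained by at least two of them at every $\rho \in N$.

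I would then fix $\rho \in N$ and set $a_i = \langle \rho, m_i\rangle$, so $\psi_{ij}(\rho) = \min(a_i, a_j)$ since $\psi_{ij}$ is the support function of the segment $P_{ij} = \mathrm{conv}\{m_i, m_j\}$ (with the min convention of Remark \ref{rem-dual-cone}). The statement reduces to a purely numerical assertion about four real numbers: among the three quantities
\[
\min(a_1,a_2)+\min(a_3,a_4),\quad \min(a_1,a_3)+\min(a_2,a_4),\quad \min(a_1,a_4)+\min(a_2,a_3),
\]
the minimum is achieved at least twice. After relabeling so that $a_1 \le a_2 \le a_3 \le a_4$, direct inspection gives values $a_1+a_3$, $a_1+a_2$, $a_1+a_2$ respectively, and $a_1+a_2$ is the common minimum of the latter two. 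This is the familiar ``four-point'' observation underlying the tropical Grassmannian of $2$-planes.

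Since $\rho$ was arbitrary, the bending condition \eqref{bending} holds, and so $\Psi \in \Trop_{\O_N}(I_{2,4})$. There is no genuine obstacle here: the only mild subtlety is keeping the sign conventions straight, namely that $\psi_{ij}$ is a $\min$ of linear functions (rather than a $\max$) because of our convention that support functions of Newton polytopes use the minimum (Remark \ref{rem-dual-cone}). Once this is observed, the argument is the standard four-point verification reviewed above.
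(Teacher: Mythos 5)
Your proof is correct and fills in exactly the verification the paper leaves implicit: the paper offers no argument beyond ``straightforward to verify by hand,'' and your reduction via Proposition \ref{tropistrop} to the pointwise four-point check $\min(a_i,a_j)$-computation is precisely that verification, with the sign conventions handled correctly.
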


\begin{proposition}\label{2ncase}
Let $\m = \{m_1, \ldots, m_n\} \subset M$, with $\Delta(\m)$ and $P_{ij}$ defined as above, then the support functions $\psi_{ij}$ of $P_{ij}$ define a point on $\Trop_{\O_N}(I_{2,n})$. 
\end{proposition}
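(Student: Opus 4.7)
The plan is to reduce Proposition \ref{2ncase} to Lemma \ref{24case} by means of a tropical basis argument. The key classical fact (recorded in the paragraph immediately preceding Lemma \ref{24case}) is that the three-term Pl\"ucker relations $p_{ij}p_{k\ell} - p_{ik}p_{j\ell} + p_{i\ell}p_{jk}$ for $1 \leq i < j < k < \ell \leq n$ form a tropical basis of $I_{2,n}$. By Proposition \ref{tropistrop}, a tropical basis over $\bar{\Q}$ is automatically a tropical basis over $\O_N$, so in order to verify that $\bar{\psi} = (\psi_{ij})_{1 \leq i < j \leq n}$ lies in $\Trop_{\O_N}(I_{2,n})$, it suffices to check that the bending identity of equation \ref{bending} holds for each three-term Pl\"ucker relation.

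Fix such a relation indexed by $1 \leq i < j < k < \ell \leq n$. Only the six variables $p_{ij}, p_{ik}, p_{i\ell}, p_{jk}, p_{j\ell}, p_{k\ell}$ appear, and under $\bar{\psi}$ these are assigned the six support functions of the edges $P_{ij}, P_{ik}, P_{i\ell}, P_{jk}, P_{j\ell}, P_{k\ell}$ determined by the four-point subset $\{m_i, m_j, m_k, m_\ell\} \subset M$. This is precisely the data to which Lemma \ref{24case} applies, and Lemma \ref{24case} asserts that these six support functions define a point of $\Trop_{\O_N}(I_{2,4})$. Since the condition in equation \ref{bending} for a single polynomial $f$ only involves the coordinates of $\bar{\psi}$ indexed by the monomials of $f$, the bending identity holding over $\O_N$ for the corresponding three-term Pl\"ucker relation viewed inside $I_{2,4}$ is the same identity needed for that relation viewed inside $I_{2,n}$.

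Running this argument across all indices $i<j<k<\ell$ covers a full tropical basis of $I_{2,n}$, establishing $\bar{\psi} \in \Trop_{\O_N}(I_{2,n})$. The only mild obstacle is to be careful that "tropical basis" is the appropriate notion: one needs the implication that satisfying the bending relation on a tropical basis is enough to belong to $\Trop_{\O_N}(I_{2,n})$, and this is exactly the content of Proposition \ref{tropistrop}. With that in hand, the proof is a routine reduction to the four-point case already handled in Lemma \ref{24case}.
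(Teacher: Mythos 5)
Your argument is correct and matches the paper's proof: both reduce to Lemma \ref{24case} via the fact that the three-term Pl\"ucker relations form a tropical basis, using Proposition \ref{tropistrop} to transfer the tropical basis property to $\O_N$. Your write-up is simply a more explicit version of the one-line proof in the paper.
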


\begin{proof}
Pl\"ucker relations are a tropical basis, and by Lemma \ref{24case} we have $\overline{\psi} = (\ldots \psi_{ij}, \ldots) \in \Trop_{\O_N}(\langle p_{ij}p_{k\ell} - p_{ik}p_{j\ell} + p_{i\ell4}p_{jk}\rangle)$ for each $1 \leq i < j < k < \ell \leq n$.  
\end{proof}

From Proposition \ref{2ncase} we conclude that \emph{the 1-skeleton of an $n$-simplex is a point on the $\O_N$ tropical $(2, n)$ Grassmannian}.  The corresponding statement for $m$-skeleton of a simplex and the $(m, n)$ Grassmannian also holds. Let $A_{m, n}$ denote the $(m, n)$ Pl\"ucker algebra. 

\begin{theorem}\label{mncase}
Let $\m$ and $\Delta(\m)$ be as above, and let $P_{I} = conv\{m_i \mid i \in I\}$ for $I \subset \{1, \ldots, n\}$, $\mid I \mid = m$, then the tuple $\overline{\psi} = (\ldots \psi_I, \ldots)$ defines a point on $\Trop_{\O_N}(I_{m,n})$, where $\psi_I$ is the support function of $P_I$. 
\end{theorem}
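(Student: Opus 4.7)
The plan is to apply the natural nonlinear analogue of Proposition \ref{linearconstructbundle}: if $\bar\lambda = (\lambda_I)_{|I|=m} \in Q_M^{\binom{n}{m}}$ is a $Q_M$-point of the Pl\"ucker variety $V(I_{m,n})$, then $\bar\lambda$ determines a $\k$-algebra homomorphism $A_{m,n} := \k[\bx]/I_{m,n} \to Q_M$, and post-composing with the valuation $\w_N : Q_M \to \O_N$ yields a valuation $v : A_{m,n} \to \O_N$ with $v(x_I) = \w_N(\lambda_I)$. Proposition \ref{trop} then places the tuple $(\w_N(\lambda_I))_I$ inside $\Trop_{\O_N}(I_{m,n})$. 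Hence it suffices to produce an $m \times n$ matrix $M$ with entries in $\k[T_N]$ whose maximal minors $p_I(M)$ satisfy $\w_N(p_I(M)) = \psi_I$; the Pl\"ucker relations on the tuple $(p_I(M))_I$ hold automatically as identities among minors of an actual matrix.

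For the construction, I will take $M$ to be the $m \times n$ matrix whose first row is $(1, \ldots, 1)$, whose second row is $(\bt^{m_1}, \ldots, \bt^{m_n})$, and whose remaining $m-2$ rows are constant vectors $(c_{j,k})_{k=1}^n \in \k^n$ for $3 \leq j \leq m$. Laplace expansion of $p_I(M)$ along the second row gives
\[
p_I(M) \;=\; \sum_{i \in I} (-1)^{\sigma(I,i)}\, d_{I,i} \, \bt^{m_i},
\]
where each $d_{I,i} \in \k$ is the $(m-1) \times (m-1)$ determinant of the submatrix of $M$ with rows $\{1, 3, \ldots, m\}$ and columns $I \setminus \{i\}$. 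Choosing the auxiliary rows in Vandermonde form, for instance $c_{j,k} = k^{j-2}$, makes each $d_{I,i}$ a classical Vandermonde determinant in the distinct column indices of $I \setminus \{i\}$, hence nonzero. Thus the Newton polytope of $p_I(M)$ is $\conv\{m_i : i \in I\} = P_I$, and $\w_N(p_I(M)) = \psi_I$ by the definition of $\w_N$.

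Combining the two steps produces $\bar\psi = (\w_N(p_I(M)))_I \in \Trop_{\O_N}(I_{m,n})$, as required. The argument has no substantive obstacle: the tropical Pl\"ucker relations need not be checked by hand, because they follow automatically from identities satisfied by honest matrix minors, and the only genericity requirement is the simultaneous nonvanishing of the coefficients $d_{I,i}$, which the explicit Vandermonde choice guarantees. The $m=2$ case recovers Proposition \ref{2ncase} directly, since then $M$ has only the first two rows and $p_{ij}(M) = \bt^{m_j} - \bt^{m_i}$ has Newton polytope $P_{ij}$.
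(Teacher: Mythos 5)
Your proposal is correct and is essentially the paper's own argument: the paper likewise forms an $m\times n$ matrix with one row of monomials $\bt^{m_1},\ldots,\bt^{m_n}$ and the remaining rows scalars chosen so that every relevant $(m-1)\times(m-1)$ minor in those rows is nonzero, expands each maximal minor along the monomial row to see that its Newton polytope is $P_I$, and then invokes Proposition \ref{trop}. The only difference is that the paper takes the scalar rows generic rather than explicitly Vandermonde; note that your concrete choice $c_{j,k}=k^{j-2}$ needs the nodes $1,\ldots,n$ to remain distinct in $\k$, which can fail in small positive characteristic, so one should instead take $n$ distinct elements of $\k$ as Vandermonde nodes (possible since $\k$ is algebraically closed, hence infinite).
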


\begin{proof}
We use a technique similar to the proof of Proposition \ref{linearconstructbundle}. Let $L(\m, \bt)$ be the following matrix with entries in $\k(T_N)$:\\

\[ 
\begin{bmatrix}
\bt^{m_1} & \cdots & \bt^{m_n}\\
a_{2,1} & \cdots & a_{2, n}\\
\vdots & \ddots & \vdots\\
a_{m,1} & \cdots & a_{m, n}
\end{bmatrix}
\]\\
where the $a_{ij}$ are chosen generically in $\k$.  By sending the Pl\"ucker generator $p_I$ to the $I$-minor $\delta_I$ of $L(\m, \bt)$, we obtain a ring map $\phi_\m: A_{m, n} \to \k(T_N)$, and a valuation $\w_N\circ \phi_\m: A_{m, n} \to \O_N$. One checks that $\delta_I(L(\m, \bt)) = \sum_{i \in I} d_i\bt^{m_i}$, where $d_i$ is a non-zero $m-1 \times m-1$ determinant in the $a_{ij}$. It follows that $\w_N\circ\phi_{\m}(p_I)$ is the support function of $P_I$.  The theorem is then a consequence of Proposition \ref{trop}. 
\end{proof}

\begin{example}
We return to the case $X_{2,4} \subset \bigwedge^2(\k^4)$, and its corresponding Pl\"ucker ideal $\langle p_{12}p_{34} - p_{13}p_{24} + p_{14}p_{23}\rangle$.    Let $m_1 = (0,0), m_2 = (1, 0), m_3 = (0, 1),$ and $m_4 = (0, 0)$. We view the convex hull of the $m_i$ as a degenerate tetrahedron in $\Q^2$.  Following Theorem \ref{mncase}, there is an associated ring map $A_{2, 4} \to Q_{(\Z^2)^*}$ given by the minors of a matrix:

\[
\begin{bmatrix}
1 & t^{10} & t^{01} & 1\\
a_{2, 1} & a_{2,2} & a_{2,3} & a_{2,4}\\
\end{bmatrix}
\] 

with generic $a_{ij}$.  We obtain an associated valuation $\w: A_{2, 4} \to \O_{\Z^2}$.  Letting $x: \Z^2 \to \Z$ and $y: \Z^2 \to \Z$ be projections onto the $x$ and $y$ axes, respectively, we have:

\[\w(p_{12}) = \min\{x, 0\} \ \ \w(p_{13}) = \min\{0,y\} \ \ \w(p_{14}) = 0\]  
\[\w(p_{23}) = \min\{x, y\} \ \ \w(p_{24}) = \min\{x, 0\} \ \ \w(p_{34}) = \min\{y, 0\}\]

\bigskip
The common refinement of the domains of linearity of these generators is the fan $\Sigma \subset \Q^2$.

\begin{figure}[!htbp]
\begin{tikzpicture}
\draw[->] (0, 0) -- (2,0);
\draw[->] (0, 0) -- (1.5,1.5);
\draw[->] (0,0) -- (0,2);
\draw[->] (0, 0) -- (-2,0);
\draw[->] (0,0) -- (-1.5,-1.5);
\draw[->] (0, 0) -- (0, -2);
\node at (1, 0) {$\bullet$};
\node at (1, 1) {$\bullet$};
\node at (0, 1) {$\bullet$};
\node at (-1, 0) {$\bullet$};
\node at (-1, -1) {$\bullet$};
\node at (0, -1) {$\bullet$};
\node at (1.5, .5) {$\sigma_1$};
\node at (.5, 1.5) {$\sigma_2$};
\node at (-1, 1) {$\sigma_3$};
\node at (-1.5, -.5) {$\sigma_4$};
\node at (-.5, -1.5) {$\sigma_5$};
\node at (1, -1) {$\sigma_6$};
\end{tikzpicture}
\caption{The fan $\Sigma$}
\end{figure}
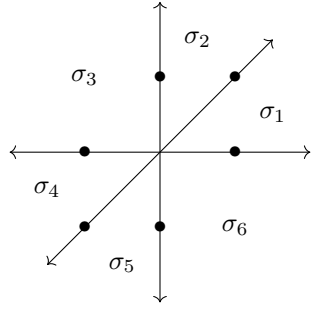

There is a corresponding piecewise linear map $\Phi: \Sigma \to \Trop(I_{2, 4})$.  Recall that $\Trop(I_{2, 4})$ can be identified with the space of metric trees with four leaves.  When used with the $\min$ convention the lengths are the non-leaf edges of these trees become negative, which is somewhat counterintuitive.  To compensate we record the negatives of the values of $\Phi$. 

\[\raisebox{5pt}{$-\Phi(1, 0) =$} \begin{tikzpicture}
\draw[-] (0, 0) -- (.25, .25);
\draw[-] (0, 0) -- (.25, -.25);
\draw[-] (0,0) -- (-.25, .25);
\draw[-] (0, 0) -- (-.25,-.25);
\node at (.3, .125) {{\tiny $0$}};
\node at (.3, -.125) {{\tiny $0$}};
\node at (-.3, .125) {{\tiny $0$}};
\node at (-.3, -.125) {{\tiny $0$}};
\end{tikzpicture} \ \
\raisebox{5pt}{$-\Phi(1, 1) =$} \begin{tikzpicture}
\draw[-] (0,0) -- (.25,0);
\draw[-] (.25, 0) -- (.5, .25);
\draw[-] (.25, 0) -- (.5, -.25);
\draw[-] (0,0) -- (-.25, .25);
\draw[-] (0, 0) -- (-.25,-.25);
\node at (.125, .125) {{\tiny $.5$}};
\node at (.61, .125) {{\tiny $-.5$}};
\node at (.61, -.125) {{\tiny $-.5$}};
\node at (-.3, .125) {{\tiny $0$}};
\node at (-.3, -.125) {{\tiny $0$}};
\end{tikzpicture} \ \
\raisebox{5pt}{$-\Phi(0, 1) =$}\begin{tikzpicture}
\draw[-] (0, 0) -- (.25, .25);
\draw[-] (0, 0) -- (.25, -.25);
\draw[-] (0,0) -- (-.25, .25);
\draw[-] (0, 0) -- (-.25,-.25);
\node at (.3, .125) {{\tiny $0$}};
\node at (.3, -.125) {{\tiny $0$}};
\node at (-.3, .125) {{\tiny $0$}};
\node at (-.3, -.125) {{\tiny $0$}};
\end{tikzpicture} \ \
 \]

\[\raisebox{5pt}{$-\Phi(-1,0) =$} \begin{tikzpicture}
\draw[-] (0, 0) -- (.25, .25);
\draw[-] (0, 0) -- (.25, -.25);
\draw[-] (0,0) -- (-.25, .25);
\draw[-] (0, 0) -- (-.25,-.25);
\node at (.3, .125) {{\tiny $1$}};
\node at (.3, -.125) {{\tiny $0$}};
\node at (-.3, .125) {{\tiny $0$}};
\node at (-.3, -.125) {{\tiny $0$}};
\end{tikzpicture} \ \
\raisebox{5pt}{$-\Phi(-1,-1) =$} \begin{tikzpicture}
\draw[-] (0,0) -- (.25,0);
\draw[-] (.25, 0) -- (.5, .25);
\draw[-] (.25, 0) -- (.5, -.25);
\draw[-] (0,0) -- (-.25, .25);
\draw[-] (0, 0) -- (-.25,-.25);
\node at (.125, .125) {{\tiny $.5$}};
\node at (.55, .125) {{\tiny $.5$}};
\node at (.55, -.125) {{\tiny $.5$}};
\node at (-.3, .125) {{\tiny $0$}};
\node at (-.3, -.125) {{\tiny $0$}};
\end{tikzpicture} \ \
\raisebox{5pt}{$-\Phi(0, -1) =$} \begin{tikzpicture}
\draw[-] (0, 0) -- (.25, .25);
\draw[-] (0, 0) -- (.25, -.25);
\draw[-] (0,0) -- (-.25, .25);
\draw[-] (0, 0) -- (-.25,-.25);
\node at (.3, .125) {{\tiny $0$}};
\node at (.3, -.125) {{\tiny $1$}};
\node at (-.3, .125) {{\tiny $0$}};
\node at (-.3, -.125) {{\tiny $0$}};
\end{tikzpicture}\]

\bigskip

The map $\Phi$ maps the positive $x$ and $y$ axes to the origin, and maps the ray through $(1, 1)$ and $\sigma_4$ and $\sigma_5$ to the face $\tau \subset \Trop(I_{2, 4})$ corresponding to the initial ideal $\langle p_{12}p_{34} - p_{13}p_{24}\rangle$.  The family $\mathcal{G}(\Phi) \to Y(\Sigma)$ has fibers $X_{2,4}$, except over the divisors $D(-1,-1), D(1,1) \subset Y(\Sigma)$, where the fiber is the toric variety $V(\langle p_{12}p_{34} - p_{13}p_{24}\rangle) \subset \bigwedge^2(\k^4)$. 
\end{example}

\begin{example}
In the previous example we see a simple toric family defined by a map $\Phi: \Sigma \to \Trop(I_{2,4})$, which hits only one non-trivial face of the tropical variety.  Now we consider a map $\Psi: |\Sigma| \to \Trop(I_{2,4})$ which folds $\Sigma$ into all three non-trivial faces of $\Trop(I_{2,4})$:

\def\DA{\begin{tikzpicture}
\draw[-] (0, 0) -- (.25, .25);
\draw[-] (0, 0) -- (.25, -.25);
\draw[-] (0,0) -- (-.25, .25);
\draw[-] (0, 0) -- (-.25,-.25);
\node at (.3, .125) {{\tiny $1$}};
\node at (.3, -.125) {{\tiny $1$}};
\node at (-.3, .125) {{\tiny $1$}};
\node at (-.3, -.125) {{\tiny $1$}};
\end{tikzpicture}}

\def\DB{
\begin{tikzpicture}
\draw[-] (0,0) -- (.25,0);
\draw[-] (.25, 0) -- (.5, .25);
\draw[-] (.25, 0) -- (.5, -.25);
\draw[-] (0,0) -- (-.25, .25);
\draw[-] (0, 0) -- (-.25,-.25);
\node at (.125, .125) {{\tiny $1$}};
\node at (.55, .125) {{\tiny $0$}};
\node at (.55, -.125) {{\tiny $0$}};
\node at (-.3, .125) {{\tiny $0$}};
\node at (-.3, -.125) {{\tiny $0$}};
\end{tikzpicture}}

\def\DC{\begin{tikzpicture}
\draw[-] (0,0) -- (.25,0);
\draw[-] (.25, 0) -- (.5, .25);
\draw[-] (0, 0) -- (.5, -.25);
\draw[-] (0,0) -- (-.25, .25);
\draw[-] (.25, 0) -- (-.25,-.25);
\node at (.125, .125) {{\tiny $1$}};
\node at (.55, .125) {{\tiny $0$}};
\node at (.55, -.125) {{\tiny $0$}};
\node at (-.3, .125) {{\tiny $0$}};
\node at (-.3, -.125) {{\tiny $0$}};
\end{tikzpicture}}

\def\DD{\begin{tikzpicture}
\draw[-] (0, 0) -- (0, .25);
\draw[-] (0, .25) -- (.25, .5);
\draw[-] (0, 0) -- (.25, -.25);
\draw[-] (0,.25) -- (-.25, .5);
\draw[-] (0, 0) -- (-.25,-.25);
\node at (.125,.125 ) {{\tiny $1$}};
\node at (.3, .375) {{\tiny $0$}};
\node at (.3, -.125) {{\tiny $0$}};
\node at (-.3, .375) {{\tiny $0$}};
\node at (-.3, -.125) {{\tiny $0$}};
\end{tikzpicture}}

\[ \raisebox{5pt}{$-\Psi_{\sigma_1 \cup \sigma_2}(x,y) = \max\{x,y\}$}\DA \raisebox{5pt}{$+ \max\{x-y, 0\}$} \raisebox{-3pt}{$\DD$}  \raisebox{5pt}{$+\max\{y-x, 0\}$} \DB \]

\[\raisebox{5pt}{$-\Psi_{\sigma_3 \cup \sigma_4}(x,y) =(-x + \max\{-y, y\})$}\DA \raisebox{5pt}{$+ \max\{y, 0\}$} \DB \raisebox{5pt}{$+ \max\{-y, 0\}$} \DC. \]

\[\raisebox{5pt}{$ -\Psi_{\sigma_5 \cup \sigma_6}(x,y) = (-y + \max\{-x, x\})$}\DA \raisebox{5pt}{$+ \max\{x, 0\}$} \raisebox{-3pt}{$\DD$} \raisebox{5pt}{$+ \max\{-x, 0\}$} \DC \]

\bigskip
On the ray generators of $\Sigma$ we have:

\[\raisebox{5pt}{$-\Psi(1, 0) =$} \DA \raisebox{5pt}{$+$}\raisebox{-3pt}{$\DD$} \ \ \raisebox{5pt}{$-\Psi(1, 1) =$} \DA \ \ \raisebox{5pt}{$-\Psi(0, 1) =$} \DA \raisebox{5pt}{$+$} \DB \]

\[\raisebox{5pt}{$-\Psi(-1, 0) =$} \DA \ \ \raisebox{5pt}{$-\Psi(-1,-1) = 2$}\DA \raisebox{5pt}{$+$} \DC \ \ \raisebox{5pt}{$-\Psi(0, -1) =$} \DA \]

\bigskip
We let $\w: A_{2, 4} \to \O_\Sigma$ denote the valuation associated to $\Psi$. 
By Theorem \ref{thm-main-StrongKhovanskii}, we compute the map $\psi: \k[X_1, \ldots, X_6, Y_{12}, \ldots, Y_{34}] \to \mathcal{R}(A_{2, 4}, \hat{\w})$, where $\psi(X_i) = t_i^{-1} \in F_{-\e_i}(\w)$, and:

\[\psi(Y_{12}) = p_{12} \in F_{-2,- 2, -3, -2, -5, -2}(\w) \ \ \ \psi(Y_{13}) = p_{13} \in F_{-3, -2, -3, -2, -4, -2}(\w) \]
\[\psi(Y_{14}) = p_{14} \in F_{-3, -2, -2, -2, -5, -2}(\w) \ \ \ \psi(Y_{23}) = p_{23} \in F_{-3, -2, -2, -2, -5, -2}(\w) \]
\[\psi(Y_{24}) = p_{24} \in F_{-3, -2, -3, -2, -4, -2}(\w) \ \ \ \psi(Y_{34}) = p_{34} \in F_{-2, -2, -3, -2, -5, -2}(\w) \]

\bigskip
Here $p_{ij} \in F_{r_1, \ldots, r_6}(\w)$ means that the value of the valuation associated to the $k$-th ray is $r_k$ on $p_{ij}$; in particular the length of the unique path from $i$ to $j$ in the metric tree defined by the image of the $k$-th ray is $-r_k$.  We have $ker(\psi) = \langle X_1^2Y_{12}Y_{34} - X_5^2Y_{13}Y_{24} + X_3^2Y_{14}Y_{23}\rangle$.  The $p_{ij}$ are a Khovanskii basis for $\w$, and the ideal $\langle ker(\psi), X_i\rangle$ is prime for all $i \in [6]$, so 

\[\mathcal{R}(A_{2, 4}, \hat{\w}) \cong \k[X_1, \ldots, X_6, Y_{12}, \ldots, Y_{34}]/\langle X_1^2Y_{12}Y_{34} - X_5^2Y_{13}Y_{24} + X_3^2Y_{14}Y_{23}\rangle.\]

\bigskip
In particular, the Pl\"ucker generators are a strong Khovanskii basis of $\w$.
\end{example}

\subsection{Prime cones}
In \cite{Kaveh-Manon-NOK} the authors show that a full rank discrete valuation can be constructed on a domain $A$ with  Khovanskii basis $\mathcal{B} \subset A$ by choosing a linearly independent set ${\bf u} = \{u_1, \ldots, u_d\}$ in a \emph{prime cone} $C \subset \Trop(I)$.   These points are then arranged into the rows of a $d \times n$ matrix $M$, where $n = |\mathcal{B}|$.  The valuation $\v_{\bf u}$ then sends $b_i$ to the $i$-th column of $M$.  The collection ${\bf u}$ defines a linear map $\Phi_{\bf u}: C_d \to C \subset \Trop(I)$, where $C_d$ is a smooth cone with $d$ rays.  In this case, the matrix $M$ is the diagram of the corresponding flat family over $\mathbb{A}^d(\k)$.  This family is described in \cite[Section 6]{Kaveh-Manon-NOK}. 

\subsection{Examples from cluster algebras}\label{clusterexample}

As we have seen in this section, $\O_N$ valuations appear on the coordinate rings of rational varieties.  In particular, any map $\phi: A \to \k(T_N)$ can be composed with the canonical valuation $\w_N: \k(T_N) \to \O_N$.  Cluster varieties are equipped with large combinatorial families of such maps, in particular there is one for each cluster chart.  Following \cite{GHKK}, we consider an $A$-type cluster variety $\A$ associated to a non-degenerate skew-symmetric, bilinear form $\{ \ ,\ \}$ on a lattice $N$.   Let $P^*: N \to M$ be the map $P^*(n) = \{n, \cdot \}$ with dual map $-P^* = P^{**}: N \to M$.  We make technical assumptions that $\A$ has no frozen variables, and that $\Theta = \A_{prin}(\Z^T)$ (see \cite[Section 7]{GHKK}). 

  A seed $\bs = \{e_1, \ldots, e_n\} \subset N$ determines a cone $\Q_{\geq 0}\bs = \sigma^\vee_\bs \subset N_\Q $ and a dual cone $\sigma_\bs \subset M_\Q$ called a \emph{chamber}.  The monoid $N_\bs^- = \sigma^\vee_\bs \cap N$ defines a polynomial ring $\k[N_\bs^-]$ equipped with an action by the torus $T_N$, in particular the $T_N$ character corresponding to a monomial $z^n \in \k[N_\bs^-]$ is $P^*(n) \in M$.   Each seed $\bs$ determines a chart $i_\bs: T_N \to \A$.  A mutation of seeds $\mu_k: \bs \to \bs'$ along the $k$-th element corresponds to a birational map $\mu_k: T_N \dashrightarrow T_N$ and a piecewise linear map $\mu_k: N \to N$. There is also a corresponding dual piecewise linear map $T_k: M \to M$.  Let $H_k = \{ m \mid \langle e_k, m \rangle = 0\} \subset M$, with $H_k^+$ and $H_k^-$ the two corresponding half-spaces.  On $H_k^-$, $T_k$ is the identity map, and on $H_k^+$, $T_k(m) = m + \langle e_k, m \rangle P^*(e_k)$.   The mutation maps $\mu_k: N \to N$ and $T_k: M \to M$ commute with $P^*: N \to M$.  

 One of the central constructions of \cite{GHKK} is the canonical basis $\Theta \subset \k[\A]$.  The elements of $\Theta$ are linearly independent, and the vector space $\can(\A) = \bigoplus \k \theta \subset \k[\A]$ is shown to be a subalgebra of $\k[\A]$ under the conditions we have assumed here. A choice of seed $\bs$ defines for each $\theta \in \Theta$ an element $g_\bs(\theta) \in M$ called the $g$-vector.  For a mutation $\mu_k: \bs \to \bs'$ we have $g_{\bs'}(\theta) = T_k(g_\bs(\theta))$.  There are distinguished subspaces of $\can(\A)$ each with a basis inherited from $\Theta$.  For each $m \in M$ we have $F_m(\bs) = \langle \{ \theta \mid m - g_\bs(\theta) \in P^*(\sigma^\vee_\bs) \} \rangle$. We let $S_{\bs} = \bigoplus_{m \in M} F_m(\bs)$ be the Rees space of this system of filtrations. 

\begin{proposition}\label{cluster-local-valuation}
The space $S_{\bs}$ is a free, $M$-graded $\k[N_\bs^-]$ module with generators given by the canonical basis $\Theta$. Furthermore, $S_\bs$ is an algebra, and $\mathcal{L}(S_\bs) = (\can(\A), \v_\bs)$, where $\v_\bs: \can(\A) \to \O_{\sigma_\bs}$ has linear adapted basis $\Theta$. 
\end{proposition}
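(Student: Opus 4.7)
The plan is to verify the three assertions in turn---freeness of $S_\bs$ over $\k[N_\bs^-]$, the algebra structure, and the identification $\mathcal{L}(S_\bs) = (\can(\A), \v_\bs)$ with $\Theta$ linear adapted---each reducing to unwinding a definition once the key multiplicative property of the canonical basis is in hand. For freeness, since $\Theta$ is a $\k$-linear basis of $\can(\A)$, the definition of $F_m(\bs)$ splits as
\[
F_m(\bs) = \bigoplus_{\theta \in \Theta,\ m - g_\bs(\theta) \in P^*(N_\bs^-)} \k\theta,
\]
and $z^n \in \k[N_\bs^-]$ acts on $\k\theta \subset F_m(\bs)$ by inclusion into $F_{m + P^*(n)}(\bs)$. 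Summing over $m$ yields $S_\bs = \bigoplus_{\theta \in \Theta} \k[N_\bs^-] \cdot \theta$, a free $M$-graded $\k[N_\bs^-]$-module with the generator $\theta$ placed in degree $g_\bs(\theta)$.

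For the algebra structure, I would show $F_m(\bs) \cdot F_{m'}(\bs) \subseteq F_{m+m'}(\bs)$ inside $\can(\A)$. Expanding $\theta \theta' = \sum_i c_i \theta_i$ in $\Theta$, this reduces to verifying that $g_\bs(\theta) + g_\bs(\theta') - g_\bs(\theta_i) \in P^*(N_\bs^-)$ whenever $c_i \neq 0$; combining with $m - g_\bs(\theta), m' - g_\bs(\theta') \in P^*(N_\bs^-)$ then places each $\theta_i$ in $F_{m+m'}(\bs)$. This compatibility is exactly the constraint on the structure constants of the theta function basis imposed by the broken-line / scattering-diagram description of \cite{GHKK}, which I would cite.

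For the last assertion, Proposition \ref{colimprop} identifies $E_{S_\bs}$ with $S_\bs / \m S_\bs$; setting $z^n = 1$ in the free decomposition above recovers $\bigoplus_\theta \k \theta \cong \can(\A)$ as $\k$-algebras. The prevaluation is then computed directly as
\[
\v_{S_\bs}(\theta)(\rho) = \max\{\langle \rho, m \rangle : \theta \in F_m(\bs)\} = \langle \rho, g_\bs(\theta)\rangle + \max_{n \in N_\bs^-}\langle \rho, P^*(n)\rangle.
\]
After identifying $\k[N_\bs^-]$ with the semigroup algebra associated to $\sigma_\bs$ through $P^*$ (injective by non-degeneracy of $\{\,,\,\}$), the second maximum is attained at $n = 0$ and equals $0$ for $\rho \in \sigma_\bs$. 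Hence $\v_\bs(\theta) = g_\bs(\theta) \in M \subset \O_{\sigma_\bs}$ is linear, and the explicit splitting of each $F_m(\bs)$ by $\Theta$ makes $\Theta \cap G^\rho_r(\v_\bs)$ a basis of $G^\rho_r(\v_\bs)$ for every $\rho$ and $r$, so $\Theta$ is a linear adapted basis in the sense of Definition \ref{linearadaptedbasis}. The substantive input to the entire argument is the $g$-vector bound on theta-function structure constants used in the second step; everything else is a direct consequence of the definitions of $F_m(\bs)$, $\mathcal{L}$, and linear adapted basis, so this GHKK compatibility is the main obstacle.
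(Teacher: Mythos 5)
Your proposal is correct and follows essentially the same route as the paper: the paper's (very terse) proof likewise assigns $z^n\theta$ the weight $P^*(n)+g_\bs(\theta)$, defers the algebra structure and the valuation property to \cite[Section 9]{GHKK} (exactly the $g$-vector constraint on theta-function structure constants you isolate), and obtains freeness and the linear adapted basis $\Theta$ from Proposition \ref{free}. Your explicit computation $\v_\bs(\theta)=g_\bs(\theta)$ agrees with the formula $\v_\bs(\theta)[\rho]=\langle\rho',g_\bs(\theta)\rangle$ stated immediately after the proposition, so you have simply filled in details the paper leaves implicit.
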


\begin{proof}
Once again, the $M$-grading on $S_\bs$ is given by assigning $z^n\theta$ for $n \in N_\bs^-$ the weight $P^*(n) + g_\bs(\theta)$. 
The fact that $S_\bs$ is an algebra and $\v_\bs$ is a valuation are both deep results from \cite[Section 9]{GHKK}.  The freeness property is a consequence of Proposition \ref{free}. 
\end{proof}

Let $\hat{\sigma}_\bs \subset N_\Q$ be the cone that maps to $\sigma_\bs \subset M_\Q$ under $P^*$. For $\rho \in \sigma_\bs \cap M$ with $\rho = P^*(\rho')$ we have $\v_\bs(\theta)(\rho) = \langle \rho', g_\bs(\theta) \rangle$. 

Once again, we fix a seed $\bs$. The $g$-fan $\Delta_\bs \subset M_\Q$ for the seed $\bs$ is a simplicial fan composed of cones $\sigma_{\bs, \bs'}$, where $\bs'$ runs over all other seeds in the cluster complex and $\sigma_{\bs, \bs} = \sigma_\bs$.  Here one thinks of $\sigma_\bs$ has the positive orthant.  For a seed $\bs'$ connected to $\bs$ through a composite series of mutations $\mu_{\bf k}$, $\sigma_{\bs, \bs'} \subset M_\Q$ is defined by the property $T_{\bf k}(\sigma_{\bs, \bs'}) = \sigma_{\bs'}$.  Remarkably, whenever $\mu: \bs \to \bs'$ is a simple mutation, $\sigma_{\bs, \bs'} \cap \sigma_{\bs, \bs}$ share the common facet $H_k \cap \sigma_{\bs}$.  It is shown in \cite{GHKK} that a mutation $\mu_k: \bs \to \bs'$ takes $\Delta_\bs$ to $\Delta_{\bs'}$, and each restriction $T_k: \sigma_{\bs, \bs''} \to \sigma_{\bs', \bs''}$ is linear.  For $\sigma_{\bs, \bs'} \in \Delta_{\bs}$, we use the map $T_{\bf k}: \sigma_{\bs, \bs'} \to \sigma_{\bs}$ to define a valuation $\v_{\bs, \bs'}: \can(\A) \to \O_{\sigma_{\bs, \bs'}}$, where $\v_{\bs, \bs'}(f)(\rho) = \v_{\bs'}(f)(T_{\bf k}(\rho))$. 

\begin{proposition}\label{cluster-bundle-global-valuation}
The valuations $\v_{\bs, \bs'}: \can(\A) \to \O_{\sigma_{\bs, \bs'}}$ fit together to define a valuation $\w_\bs: \can(\A) \to \hat{\O}_{\Delta_\bs}$.  Furthermore, for each $\theta \in \Theta$, there is a line bundle $D_\bs(\theta)$ on the toric variety $Y_{\Delta_\bs}$, and $\mathcal{L}(\bigoplus_{\theta \in \Theta} D_\bs(\theta)) = (\can(\A), \w_\bs)$. 
\end{proposition}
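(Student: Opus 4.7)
The plan is to prove the proposition in two stages: first show the local valuations $\v_{\bs,\bs'}$ glue into a well-defined global valuation $\w_\bs$ on $|\Delta_\bs|$, and then use the global equivalence of Section \ref{globalequivalence} together with Proposition \ref{free} to exhibit the associated sheaf as a direct sum of line bundles indexed by $\Theta$.

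For gluing, I would fix two adjacent chambers $\sigma_{\bs,\bs'}$ and $\sigma_{\bs,\bs''}$, where $\bs''=\mu_k(\bs')$, sharing a common facet $F$, and check that $\v_{\bs,\bs'}\!\mid_F=\v_{\bs,\bs''}\!\mid_F$. If $\mu_{\bf k}$ is a mutation sequence from $\bs$ to $\bs'$, then $\mu_{\bf k}\cdot\mu_k$ connects $\bs$ to $\bs''$, so the transport maps are related by $T_{\bf k \cdot k}=T_k\circ T_{\bf k}$. By the construction in \cite{GHKK}, $T_{\bf k}$ maps $F$ into $H_k\cap\sigma_{\bs'}=H_k\cap\sigma_{\bs''}$, and on $H_k^-$ (in particular on this image) the map $T_k$ is the identity. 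Similarly $g_{\bs''}(\theta)=T_k g_{\bs'}(\theta)$ fixes $g$-vectors of canonical basis elements whose relevant evaluations lie in $H_k^-$, so pulling back $\v_{\bs'}$ and $\v_{\bs''}$ via the two chains of $T$'s produces the same function on $F$. Pointwise the valuation axioms transfer from each $\v_{\bs,\bs'}$ to the glued function, giving $\w_\bs:\can(\A)\to\hat\O_{\Delta_\bs}$.

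For the line bundle statement, Proposition \ref{cluster-local-valuation} provides $\Theta$ as a linear adapted basis for each $\v_{\bs'}$, and $T_{\bf k}$ being linear on $\sigma_{\bs,\bs'}$ transfers this to a linear adapted basis of $(\can(\A),\v_{\bs,\bs'})$ for every $\bs'$. Consequently each $\w_\bs(\theta)$ is linear on every chamber of $\Delta_\bs$, so it is a PL function $\psi_\theta$ on $\Delta_\bs$ in the sense of Section \ref{Klyachko}, and we define $D_\bs(\theta)=\O(\psi_\theta)$. By Proposition \ref{free} applied chamber-by-chamber we have $\mathcal{R}(\can(\A),\v_{\bs,\bs'})\cong\bigoplus_{\theta\in\Theta}S_{\sigma_{\bs,\bs'}}[\psi_\theta\!\mid_{\sigma_{\bs,\bs'}}]$, and Proposition \ref{eversivesum} shows this module is eversive. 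These local Rees modules are the sections of $\bigoplus_{\theta\in\Theta}D_\bs(\theta)$ over each $Y(\sigma_{\bs,\bs'})$, so Theorem \ref{eversiveglobalequivalence} guarantees they glue into the claimed global sheaf with $\mathcal{L}(\bigoplus_\theta D_\bs(\theta))=(\can(\A),\w_\bs)$.

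The main obstacle will be the facet compatibility: one must carefully track how the two composite transports $T_{\bf k}$ and $T_{\bf k\cdot k}=T_k\circ T_{\bf k}$ act on $F$ and confirm that the action of the final $T_k$ is trivial there. This is precisely the content of the tropical mutation formula, but it requires checking that, for every $\theta\in\Theta$ and every $\rho\in F$, the extra term $\langle e_k,g_{\bs'}(\theta)\rangle P^*(e_k)$ contributes zero — which happens because the relevant evaluation $\langle T_{\bf k}(\rho),\cdot\rangle$ factors through $H_k$. Once this combinatorial identity is in hand the remainder of the argument is a formal invocation of the functorial machinery built up in Sections \ref{category}--\ref{freesection}.
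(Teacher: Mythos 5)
Your proposal is correct and follows essentially the same route as the paper: reduce the gluing to agreement of the transported valuations on shared facets, check this on the adapted basis $\Theta$ via the tropical mutation formula $T_k(m)=m+\langle e_k,m\rangle P^*(e_k)$ and the skew-symmetry identity $\langle \rho', P^*(e_k)\rangle = \{\rho',e_k\}=0$ for $P^*(\rho')\in H_k$, and then obtain the line bundles $D_\bs(\theta)=\O(\w_\bs(\theta))$ from the chamber-wise linearity together with Proposition \ref{free} and Theorem \ref{eversiveglobalequivalence}. The only cosmetic difference is that the paper first reduces the general facet adjacency to the case of a simple mutation out of $\bs$ before doing the computation, whereas you treat the composite transport directly; the substance is identical.
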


\begin{proof}
We show that for any $\sigma_{\bs, \bs'}$ and $\sigma_{\bs, \bs''}$ meeting at a facet in $\Delta_\bs$, the restrictions of $\v_{\bs, \bs'}$ and $\v_{\bs, \bs''}$ agree. Since $\Theta$ is an adapted basis of all $\v_{\bs, \bs'}$, it suffices to show the restrictions of $\v_{\bs, \bs'}(\theta)$ and $\v_{\bs, \bs''}(\theta)$ coincide.  If this already holds when $\bs' = \bs$ (ie there is a simple mutation $\mu: \bs \to \bs''$), then for $\rho \in \sigma_{\bs, \bs'} \cap \sigma_{\bs, \bs''}$ 
we have $\v_{\bs, \bs'}(f)(\rho) = \v_{\bs'}(f)(T_{\bf k}(\rho)) = \v_{\bs', \bs''}(f)(T_{\bf k}(\rho)) = \v_{\bs, \bs''}(f)(\rho)$, where $\mu_{\bf k}: \bs \to \bs'$.   So it remains to check the case when $\bs'$ is adjacent to $\bs$.  Let $T_k: \sigma_{\bs, \bs'} \to \sigma_{\bs'}$; then be definition $T$ is identity on the shared face of $\sigma_{\bs, \bs'}$ and $\sigma_{\bs}$, as this is contained in $H_k$. Now let $\rho$ be in this face, then $\v_{\bs, \bs'}(\theta)(\rho) = \v_{\bs'}(\theta)(\rho) = \langle P^*(\rho'), g_{\bs'}(\theta)\rangle$, for $\rho' \in N_\Q$ in the hyperplane which maps to $H_k$ under $P^*$. This is equal to $\langle \rho', T_k(g_\bs(\theta))\rangle = \langle \rho', g_\bs(\theta) + \langle e_k, g_\bs(\theta) \rangle P^*(e_k) \rangle$.  But by assumption $\langle \rho', P^*(e_k) \rangle = \{\rho', e_k \} = - \{e_k, \rho'\} = - \langle e_k, P^*(\rho') \rangle = 0$, since $P^*(\rho') \in H_k$.  This implies that the $\v_{\bs, \bs'}$ fit together into a valuation $\w_\bs: \can(\A) \to \hat{\O}_{\Delta_\bs}$ with adapted basis $\Theta$.  For each $\theta \in \Theta$, the restriction of $\w_{\bs}(\theta)$ to a face of $\Delta_\bs$ is linear; it follows that there is a toric line bundle $D_\bs(\theta)$ on $Y_{\Delta_\bs}$ corresponding to the piecewise linear function $\w_\bs(\theta)$.  Theorem \ref{eversiveglobalequivalence} implies the rest. 
\end{proof}

 Observe that while $\w_\bs$ depends on the choice of seed $\bs$, the ring of global coordinates $\mathcal{R}(\can(\A), \w_\bs)$ does not, as the cluster fans $\Delta_\bs$ and the valuations $\w_\bs$ are all related by piecewise linear maps. 

\section{Proofs for Section \ref{category}}\label{proofs}
In this section we give proofs for the main results in Section \ref{category}.


\subsection*{Proof of Theorem \ref{thm-adjoint}}
First we show that $\L: \Mod_{S_\sigma}^M \to \Vect_\sigma$ and $\mathcal{R}: \Vect_\sigma \to \Mod_{S_\sigma}^M$ are functors. For $R \in \Mod_{S_\sigma}^M$, we check that $\v_R(f) \in \hat{\O}_\sigma$, it is then immediate that $\v_R$ is a prevaluation.  Fix $\ell \in \Z_{> 0}$, and suppose $\v_R(f)(\rho) = r$, so  $f \in G^\rho_r(R) \setminus G^\rho_{r-1}(R)$. By definition, $f \in G^{\ell\rho}_{\ell r}(R)$, and if $f \in G^{\ell \rho}_{\ell r -1}(R)$, then for some $n_1, \ldots, n_d$ we have $f \in F_{n_1}(R) + \cdots + F_{n_d}(R)$ where $\langle \ell \rho, n_i \rangle > \ell r$. But then $\langle \rho, n_i \rangle > r$, which contradicts $\v_R(f)(\rho) = r$. 

For an $\Mod_{S_\sigma}^M$-morphism $\psi: R_1 \to R_2$ there is an induced map $\mathcal{L}(\psi): E_{R_1} \to E_{R_2}$, and for any $m \in M$ the $m$-component $\psi_m$ satisfies $\phi_R \circ \psi_m = \mathcal{L}(\psi) \circ \phi_R$. It follows that $\mathcal{L}(\psi)$ maps $G^\rho_r(R_1)$ into $G^\rho_r(R_2)$, so $\mathcal{L}(\psi)$ is a morphism in $\Vect_\sigma$. 

For $(E, \v) \in \Vect_\sigma$, it is clear that $\mathcal{R}(E, \v) \in \Mod_{S_\sigma}^M$, so it remains to define the morphism $\mathcal{R}(\psi): \mathcal{R}(E_1, \v_1) \to \mathcal{R}(E_2, \v_2)$ associated to $\psi: (E_1, \v_1) \to (E_2, \v_2)$. We have $\v_1(f) \geq m$ implies $\v_2(\psi(f)) \geq \v_1(f) \geq m$, so $\psi(F_m(\v_1)) \subset F_m(\v_2)$.  For any $u \in \sigma^\vee \cap M$ the action of $\chi_{m}$ commutes with the inclusions $\psi$, so $\mathcal{R}(\psi)$ is a morphism in $\Mod_{S_\sigma}^M$. It is straightforward to show $\mathcal{R}(\psi\circ \psi') = \mathcal{R}(\psi) \circ \mathcal{R}(\psi')$.

Now we prove the adjunction statement.  Recall the universal property of colimit: if we are given a system of maps $\psi_m: R_m \to E$ which commute with $\chi_u$ for $u \in \sigma^\vee \cap M$, then there is a unique map $\ell(\psi) = \colim \psi_m : E_R \to E$ such that $\psi_m = (\ell(\psi) \circ \phi_R)_m$.  Let $\phi: \mathcal{L}(R) \to (E, \v)$ be a morphism in $\Vect_\sigma$.  The image $F_m(R)$ is a subspace of $F_m(\v_R)$, so we let $r(\phi): R \to \mathcal{R}(E, \v)$ be defined by the maps $r(\phi)_m = \phi_m \circ \phi_R: R_m \to F_m(\v)$.  The following diagram commutes:

$$
\begin{CD}
R_m @>\phi_R>> F_m(\v_R) @>\phi_m>> F_m(\v)\\
@V\chi_uVV @VVV @VVV\\
R_{m + u} @>\phi_R>> F_{m+u}(\v_R) @>\phi_{m+u}>> F_{m+u}(\v),
\end{CD}
$$\\
as the right two vertical arrows are inclusions, with the middle occuring in the colimit $E_R$. As a consequence, $r(\phi) \in \Hom_{\Mod_{S_\sigma}^M}(R, \mathcal{R}(A, \v))$.    

Given $\psi: R \to \mathcal{R}(E, \v)$, we have a family of maps $\psi_m: R_m \to F_m(\v)$ which commute with each $\chi_u$. We get $\ell(\psi): E_R \to E$, where $\psi_m = \ell(\psi) \circ \phi_R$ on $R_m$.  Furthermore, $G^\rho_r(R)$ is mapped into $\sum_{\langle \rho, m \rangle \geq r} F_m(\v) \subset G^\rho_r(\v)$  so $\ell(\psi) \in \Hom_{\Vect_{\sigma}}(\mathcal{L}(R), (E, \v))$. 

Now we show that $\ell$ and $r$ are inverses of each other. Given $\phi: \mathcal{L}(R) \to (E, \v)$ and $f \in E_R$, pick $\hat{f} \in R_m$ so that $\phi_R(\hat{f}) = f$.  Then $r(\phi)(\hat{f}) = \phi \circ \phi_R(\hat{f}) = \phi(f) \in F_m(\v)$.  It follows that $\ell(r(\phi))(f) = \phi(f)$.  If $\psi: R \to \mathcal{R}(E, \v)$, $\psi_m = (\ell(\psi)\circ \phi_R)_m$, so $r(\ell(\psi))_m = (\ell(\psi) \circ \phi_R)_m = \psi_m$.   We omit the proof that $r$ and $\ell$ are natural, as it is straightforward. \\

\subsection*{Proof of Theorem \ref{eversiveglobalequivalence}}
Theorem \ref{thm-adjoint} implies that the fiber $E_\F$ over $id \in T_N \subset Y(\Sigma)$ carries a prevaluation $\v_{\F, \sigma}: E_\F \to \hat{\O}_\sigma$ for each face $\sigma \in \Sigma$. We must check that for any $f \in E_\F$, and $\tau = \sigma_1 \cap \sigma_2$, the restrictions $\v_{\F, \sigma_1}(f) \!\! \mid_\tau$ and $\v_{\F, \sigma_2} (f) \!\! \mid_\tau$ coincide.  More generally, let $\tau \subset \N'$ and $\sigma \subset \N$ be pointed polyhedral cones, and let $\iota: \tau \to \sigma$ be a linear map induced by a map of lattices $\iota: N' \to N$.  There is an associated map on dual lattices $\iota^*: M \to M'$ and semigroup algebras $\iota^*: S_\sigma \to S_\tau$.   The map $\iota^*$ gives an extension functor $-\otimes_{S_\sigma} S_\tau: \Mod_{S_\sigma}^M \to \Mod_{S_\tau}^{M'}$.   We also have a functor $\iota^{\dagger}: \Vect_\sigma \to \Vect_\tau$ obtained by composing $\v: E \to \hat{\O}_\sigma$ with the map on semialgebras $\iota^{\sharp}: \hat{\O}_\sigma \to \hat{\O}_\tau$ given by precomposition with $\iota$. We show that these two functors coincide under $\mathcal{L}$.  As a consequence, the restriction of $\mathcal{L}(R) = (E_R, \v_R)$ to a facet $\tau \subset \sigma$ is $\mathcal{L}(R\otimes_{S_\sigma} S_\tau)$. 

\begin{proposition}\label{localization}
The following diagram commutes. 

\[
\begin{CD}
\Mod_{S_\sigma}^M @>-\otimes_{S_\sigma} S_\tau>> \Mod_{S_\tau}^{M'}\\
@V\mathcal{L}_\sigma VV @V \mathcal{L}_\tau VV\\
\Vect_\sigma @>\iota^{\dagger}>> \Vect_\tau\\
\end{CD}
\]

\end{proposition}

\begin{proof}
Let $R' = R \otimes_{S_\sigma} S_\tau$ and $R_0 = R\otimes_{S_\sigma} \k[M]$. We have $R' \otimes_{S_\tau} \k[M'] \cong R_0\otimes_{\k[M]}\k[M']$. By setting $\chi_{m'} = 1$ for each $m' \in M'$ we obtain $E_R \cong E_{R'}$, so $\phi_{R'} \circ (\iota^* \otimes 1) = \phi_R$.  Now let $\rho \in \tau \cap N'$, and consider $G^{\iota(\rho)}_r(R) = \sum_{\langle \iota(\rho), m \rangle \geq r} F_m(R)$. This space is the image of $\bigoplus_{\langle \iota(\rho), m \rangle \geq r} R_m \subset R$ under $\phi_R$.  Similarly, $G^\rho_r(R')$ is the image of $\bigoplus_{\langle \rho, m \rangle \geq r} R'_m$ under $\phi_{R'}$ where 

\begin{equation}
R'_m = (\bigoplus_{n \in M, u \in \tau^\vee \cap M', \iota^*(n) + u = m} R_n \otimes_{\k} \k\chi_u)/\sim.
\end{equation}
We have $\langle \iota(\rho), m \rangle \geq r$ if and only if $\langle \rho, \iota^*(m) \rangle \geq r$, so $F_m(R) \subset F_{\iota^*(m)}(R')$ and $G^{\iota(\rho)}_r (R) \subset G^\rho_r(R')$ in $E_R \cong E_{R'}$.  Similarly, if $f \in G^\rho_r(R')$ then we can write $f = \sum \phi_R(g_i)$, for $g_i \in F_{n_i}(R) = \phi_{R'}((\iota^* \otimes 1)(R_{n_i} \otimes_\k \k \chi_u))$, where $\iota^*(n_i) \odot u = m_i$ with $\langle \rho, m_i \rangle \geq r$.   But then $\langle \rho, \iota^*(n_i) \rangle \geq r$, so that $\langle \iota(\rho), n_i \rangle \geq r$.  It follows that $f \in G^{\iota(\rho)}_r(R)$ and $G^{\iota(\rho)}_r(R) = G^\rho_r(R')$.   Now fix $\rho \in \tau \cap N'$ and $f \in A_R$, then $\iota^{\sharp}\v_R(f)(\rho) = \v_R(f)(\iota(\rho))$, which is equal to $\v_{R'}(f)(\rho)$ by the above calculation.  This shows that $\iota^{\dagger} \circ \mathcal{L}_\sigma (R) = (E_R, \iota^{\sharp}\v_R)$ is isomorphic to $(E_{R'}, \v_{R'}) = \mathcal{L}_\tau \circ (R \otimes_{S_\sigma} S_\tau)$ by the map which identifies $E_R$ with $E_{R'}$. 
\end{proof}

Proposition \ref{localization} implies that $\L(\F) = (E_\F, \v_\F) \in \Vect_\Sigma$.  By Proposition \ref{thm-adjoint} a morphism $\phi: \mathcal{F} \to \mathcal{G}$ gives a map $\mathcal{L}(\phi)\!\!\mid_\sigma: (E_\mathcal{F}, \v_{\mathcal{F}}\!\! \mid_\sigma) \to (E_\mathcal{G}, \v_{\mathcal{G}}\!\! \mid_\sigma)$, so we have a functor $\mathcal{L}: \Sh^M_{Y(\Sigma)} \to \Vect_\Sigma$.  We observe (\cite[Lemma 17.15.4]{stacks}) that for any $\sigma \in \Sigma$, and $\F, \E \in \Sh_{Y(\Sigma)}^M$, $\mathcal{L}(\F \otimes_{Y(\Sigma)} \E)$ restricted to $\sigma$ is $\mathcal{L}\circ \Gamma(Y(\sigma), \E\!\mid_{Y(\sigma)} \otimes_{Y(\sigma)} \F\!\mid_{Y(\sigma)})$ $=\mathcal{L}(\Gamma(Y(\sigma), \E)\otimes_{S_{\sigma}} \Gamma(Y(\sigma), \F))$ since $\E, \F$ are quasi-coherent and $Y(\sigma)$ is affine.  The latter is $\mathcal{L}(\Gamma(Y(\sigma), \E)) \otimes \mathcal{L}(\Gamma(Y(\sigma), \F))$, so it follows that $\mathcal{L}(\E \otimes_{Y(\Sigma)} \F) = \mathcal{L}(\E) \otimes \mathcal{L}(\F)$.

Now let $\F, \mathcal{G} \in \Sh^{M, ev}_{Y(\Sigma)}$, and consider a map $\psi: (E_\mathcal{F}, \v_{\mathcal{F}}) \to (E_{\mathcal{G}}, \v_{\mathcal{G}})$.  For any $\sigma \in \Sigma$ we obtain a map on modules $\mathcal{R}(\psi \!\!\mid_\sigma): \mathcal{R}(E_\F, \v_\F \!\!\mid_\sigma) \to \mathcal{R}(E_\mathcal{G}, \v_\mathcal{G} \!\!\mid_\sigma)$, which in turn induces a map on quasi-coherent $Y(\sigma)$ sheaves: $\tilde{\mathcal{R}}(\psi \!\!\mid_\sigma): \tilde{\mathcal{R}}(E_\F, \v_\F \!\!\mid_\sigma) \to \tilde{\mathcal{R}}(E_\mathcal{G}, \v_\mathcal{G} \!\!\mid_\sigma)$.  The sheaves $\F, \mathcal{G}$ were chosen in $\Sh^{M, ev}_{Y(\Sigma)}$, so $\tilde{\mathcal{R}}(E_\F, \v_\F \!\!\mid_\sigma)$ and $\tilde{\mathcal{R}}(E_\mathcal{G}, \v_\mathcal{G} \!\!\mid_\sigma)$ coincide with the restrictions $\F \!\mid_{Y(\sigma)}$ and $\mathcal{G} \!\mid_{Y(\sigma)}$, and we obtain an induced map on \emph{descent data} for the open cover of $Y(\Sigma)$ by the $Y(\sigma)$ for $\sigma \in \Sigma$: $\tilde{\mathcal{R}}(\psi): \{ (Y(\sigma), \F \!\mid_{Y(\sigma)}), \sigma \in \Sigma\} \to \{ (Y(\sigma), \mathcal{G} \!\mid_{Y(\sigma)}), \sigma \in \Sigma\}$ (see e.g. \cite[Lemma 68.3.4]{stacks}).  This map induces a unique map between $\F$ and $\mathcal{G}$.  It's straightforward to check that this construction is inverse to $\mathcal{L}$ on $\Sh^{M, ev}_{Y(\Sigma)}$, in particular $\mathcal{L}(\phi): (E_\mathcal{F}, \v_{\mathcal{F}}) \to (E_\mathcal{G}, \v_{\mathcal{G}})$ is an isomorphism if and only if $\phi: \mathcal{F} \to \mathcal{G}$ is an isomorphism. \\

\subsection*{Proof of Proposition \ref{fiber}} 
We make use of Lazard's Theorem  (\cite[Theorem 10.80.4]{stacks}).   Any $f\in G^\rho_r(R)$ is a sum of elements $f_i \in F_{m_i}(R)$ with $\langle \rho, m_i \rangle \geq r$, and all $F_{m}(R)$ with $\langle \rho, m \rangle = r$ are in the image of $\hat{\phi}_\rho$.    If $g \in \m_\tau R$ then we can write $g = \sum \chi_{v_j}g_j + \sum (\chi_{u_i} -1)h_i$ for $\langle \rho, v_j \rangle < 0$ and $\langle \rho, u_i \rangle = 0$, where $g_j$ and $h_i$ are homogeneous.  Let $deg(g) = m-v$ so that $\chi_vg \in R_m$, then $\hat{\phi}_\rho(\chi_vg) \in F_{m-v}(R) \subset G^\rho_{\langle \rho, m \rangle}(R)$. It follows that $\hat{\phi}_\rho(\chi_v g) = 0$.  Similarly, since $\langle \rho, u \rangle = 0$, $\hat{\phi}_\rho(\chi_u h) = \hat{\phi}_\rho(h)$.   If $R \in \Alg_{S_\sigma}^M$ then $\mathcal{L}(R) \in \Alg_\sigma$; it is straightforward to check that $\gr_\rho(E_R)$ is a graded algebra, and $\phi_\rho$ is a map of algebras. 

Now we show that $\phi_\rho$ is a injective for a free module $P$. Let $p_i \in F_{m_i}(P)$ with $\langle \rho, m_i \rangle = r$, and suppose that $\phi_\rho(\sum_{i =1}^n p_i) = 0$. Let $\mathbb{B} \subset E_P$ be a linear adapted basis with $deg(b_j) = \lambda_j$ for $b_j \in \B$.  Then $p_i = \sum_{j =1}^k c_{ij} \chi_{v_{ij}} b_j$ with $v_{ij} + \lambda_j = m_i$.   If $\langle \rho, v_{ij} \rangle < 0$ then $c_{ij}\chi_{v_{ij}}b_j \in \m_\tau P$, so without loss of generality we assume that $\langle \rho, v_{ij} \rangle = 0$. This implies that $\langle \rho, \lambda_j \rangle = r$ for each $b_j$.  Since the $b_j$ with $\langle \rho, \lambda_j \rangle = r$ form a basis of $G^\rho_r(P)/G^\rho_{> r}(P)$, we conclude that $\sum_{i =1}^n c_{ij} = 0$ for each $j$. Then $\sum_{i = 1}^n p_i = \sum_{j =1}^k (\sum_{i =1}^n c_{ij}\chi_{v_{ij}})b_j = \sum_{j =1}^k (\sum_{i =1}^n c_{ij}(\chi_{v_{ij}} -1))b_j \in \m_\tau P$.  Finally, observe that $\gr_\rho: \Vect_\sigma \to \Vect_\k$ is a functor which commutes with colimits (it is a direct sum of cokernels).  The functor $\mathcal{L}$ is a left adjoint, so it also commutes with colimits.  For any flat module $R$, we can write $\colim P_i = R$ for $P_i$ free. It follows that $R/\m_\tau R \cong \colim P_i/\m_\tau P_i \cong \colim \gr_\rho(\mathcal{L}(P_i)) \cong \gr_\rho(\colim \mathcal{L}(P_i)) \cong \gr_\rho(\mathcal{L}(R)) = \gr_\rho(E_R)$. \\

\subsection*{Proof of Proposition \ref{arrangementspaces}}
By definition, $F_\psi(\v) \subset \bigcap_{\varrho_i \in \Sigma(1)} G^{\rho_i}_{\psi(\rho_i)}(\v)$. We show that if $\v(f)(\rho_i) \geq \langle \rho_i, m_\sigma \rangle$ for each $\varrho_i \in \sigma(1)$ then $\v(f)\!\!\mid_\sigma \geq m_\sigma$, so $F_{m_\sigma}(\v\!\!\mid_\sigma) = \bigcap_{\varrho_i \in \sigma(1)} G^{\rho_i}_{\psi(\rho_i)}(\v)$.  A function $\phi \in \hat{\O}_\sigma$ is said to be \emph{concave} if for any $\rho_1, \ldots, \rho_\ell \in \sigma$ we have $\phi(\sum_{i =1}^\ell \rho_i) \geq \sum_{i =1}^\ell \phi(\rho_i)$.  We show that $\v_R: E_R \to \hat{\O}_\sigma$ takes concave values when $R$ is flat. 

If $(E, \v) = \mathcal{L}(R)$, where $R \in \Mod_{S_\sigma}^M$, let $\rho, \rho' \in \sigma \cap N$, and consider $f \in G^{\rho + \rho'}_r(R)$ for some $r \in \Z$.   We can write $f = \sum_{i =1}^\ell f_i$ for $f_i \in F_{m_i}(R)$ with $\langle \rho + \rho', m_i \rangle \geq r$.   We let $s_i = \langle \rho, m_i \rangle$ and $t_i = \langle \rho', m_i\rangle$ with $s_i + t_i = r$, so that  $f_i \in G^{\rho}_{s_i}(R) \cap G^{\rho'}_{t_i}(R)$.  It follows that $f \in \sum_{s + t \geq r} G^\rho_s(R) \cap G^{\rho'}_t(R)$, and $G^{\rho +\rho'}_r(R) \subset \sum_{s + t \geq r} G^\rho_s(R) \cap G^{\rho'}_t(R)$.  We show that containment holds in the other direction when $R$ is flat.   Let $f \in G^\rho_s(R) \cap G^{\rho'}_t(R)$ for $s + t \leq r$, then $f = \sum g_i = \sum h_j$ for $g_i \in F_{m_i}(R)$ and $h_j \in F_{n_j}(R)$ with $\langle \rho, m_i \rangle \geq s$ and $\langle \rho', n_j \rangle \geq t$.   Since $R$ is flat it is torsion-free; we choose an appropriate $m$ and identify $f \in R_m$ with $f = \sum \chi_{u_i} g_i$ $= \sum \chi_{v_j}h_j$ for $u_i + m_i = v_j + n_j = m$.  This is always possible as $\sigma^\vee$ is a full dimensional cone in $\M$.   Now we choose a presentation $\hat{\pi}: P \to R$ with $p_i \in P_{m_i}$ and $q_j \in P_{n_j}$ such that $\hat{\pi}(p_i) = g_i$, $\hat{\pi}(h_j) = q_j$.  We can find $\phi: P \to P'$, $\pi: P' \to R$ such that $\pi \circ \phi = \hat{\pi}$ with $P'$ free and $\pi(\sum \chi_{u_i} p_i)$ $= \pi(\sum \chi_{v_j}q_j) = p \in G^\rho_s(P') \cap G^{\rho'}_t(P') \subset G^{\rho+ \rho'}_r(P')$. We have $\mathcal{L}(\pi)(G^{\rho + \rho'}_r(P')) \subset G^{\rho + \rho'}_r(R)$, so $\mathcal{L}(\pi)(p) = f \in G^{\rho + \rho'}_r(R)$.  In conclusion, if $R$ is flat then:
\begin{equation}
G^{\sum_{i =1}^\ell \rho_i}_r(R) = \sum_{\sum_{i =1}^\ell t_i \geq r} G^{\rho_1}_{t_1}(R) \cap \cdots \cap G^{\rho_\ell}_{t_\ell}(R).
\end{equation}
If $\v(f)(\rho_i) = t_i$, it follows that $f \in G^{\rho_1}_{t_1}(R) \cap \cdots \cap G^{\rho_\ell}_{t_\ell}(R) \subset G^{\sum_{i =1}^\ell \rho_i}_r(R)$, where $r = \sum_{i =1}^\ell t_i$, so that $\v(f)(\sum_{i =1}^\ell \rho_i) \geq r = \sum_{i =1}^\ell \v(f)(\rho_i)$.  This proves that $\v(f)$ is concave. 

Now we consider $F_m(\v) = \bigcap _{\rho \in \sigma \cap N} G^\rho_{\langle \rho, m \rangle}(R) \subset \bigcap_{\varrho_i \in \sigma(1)} G^{\rho_i}_{\langle \rho_i, m\rangle}(R)$.   Pick $\rho \in \sigma \cap N$ and $d \in \Z_{\geq 0}$ so that $d\rho = \sum_{\varrho_i \in \sigma(1)} d_i\rho_i$ for some $d_i \in \Z_{\geq 0}$.   Now suppose that $\v(f)(\rho_i) \geq \langle \rho_i, m \rangle$ for $m \in M$.  It follows that $\v(f)(d\rho) = \v(f)(\sum_{\varrho_i \in \sigma(1)} d_i\rho_i) \geq \sum_{\varrho_i \in \sigma(1)} \v(f)(d_i\rho_i)$.  We have $\v(f) \in \hat{\O}_\sigma$, so $d\v(f)(\rho) = \v(f)(d\rho) \geq \sum_{\varrho_i \in \sigma(1)} \v(f)(d_i\rho_i) \geq \sum_{\varrho_i \in \sigma(1)} \langle d_i\rho_i, m \rangle = d\langle \rho, m\rangle$, and $\v(f)(\rho) \geq \langle \rho, m \rangle$. As $\rho$ was arbitrary, it follows that $f \in F_m(\v)$.

\bibliographystyle{alpha}
\bibliography{Biblio}

\end{document}